\documentclass[12pt,reqno]{amsart}

\usepackage{amscd,amsmath,amsthm,amssymb,graphics}
\usepackage{mathtools}
\usepackage{enumitem}
\usepackage{microtype}
\usepackage{mathrsfs}
\usepackage{bm}
\usepackage{booktabs}
\usepackage{needspace}
\usepackage{tikz-cd}
\usepackage[hidelinks]{hyperref}

\AtBeginDocument{%
  \setlength{\abovedisplayskip}{4.8pt plus 2pt minus 1pt}%
  \setlength{\belowdisplayskip}{4.8pt plus 2pt minus 1pt}%
  \setlength{\abovedisplayshortskip}{0pt plus 2pt}%
  \setlength{\belowdisplayshortskip}{4.8pt plus 2pt minus 1pt}%
}

\newtheorem{theorem}{Theorem}[section]
\newtheorem{proposition}[theorem]{Proposition}
\newtheorem{lemma}[theorem]{Lemma}
\newtheorem{corollary}[theorem]{Corollary}
\theoremstyle{definition}
\newtheorem{definition}[theorem]{Definition}
\newtheorem{example}[theorem]{Example}
\theoremstyle{remark}
\newtheorem{remark}[theorem]{Remark}

\newcommand{\Rep}{\operatorname{rep}}
\newcommand{\Hom}{\operatorname{Hom}}
\newcommand{\Ext}{\operatorname{Ext}}
\newcommand{\grade}{\operatorname{grade}}
\newcommand{\EssIm}{\operatorname{EssIm}}

\newcommand{\modcat}{\operatorname{mod}}
\newcommand{\LFgr}{\mathsf{LF}^{\mathrm{gr}}(S)}
\newcommand{\Dgr}{D_{\mathrm{gr}}}

\begin{document}

\title{Categories of Multigraded Local Cohomology Modules: Serre Filtrations and Nakayama Duality}
\author{Dancheng Lu}
\address{School of Mathematical Sciences, Soochow University, Suzhou 215006, P.~R.~China}
\email{ludancheng@suda.edu.cn}
\hypersetup{
  pdftitle={Categories of Multigraded Local Cohomology Modules: Serre Filtrations and Nakayama Duality},
  pdfauthor={Dancheng Lu},
  pdfsubject={Multigraded local cohomology and incidence-algebra stratifications},
  pdfkeywords={multigraded local cohomology, incidence algebra, Serre subcategory, Nakayama functor}
}

\begin{abstract}
Let $S=\Bbbk[x_1,\ldots,x_n]$ be the standard $\mathbb N^n$-graded
polynomial ring over a field $\Bbbk$, and write $\mathfrak m=(x_1,\ldots,x_n)$.  For
$0\le i<n$ and $q=n-i$, we identify the category
$\mathcal H_i(\mathbf t)$ of shifted multigraded local cohomology modules with
\[
\Rep(U_q(\mathbf t)),\qquad
U_q(\mathbf t)=\{\mathbf a\in[\mathbf0,\mathbf t]\mid
|\operatorname{supp}(\mathbf a)|\ge q\}.
\]
This gives the finite and global Serre filtrations.  Abelian recollements
arising from order-ideal decompositions provide a unified framework for these
filtrations: they yield canonical TTF triples and hereditary support torsion
pairs, while the corresponding Gabriel quotients are described in terms of
the pure support-rank strata.  For
finite posets both complementary recollement orientations
exist, whereas for the global finite-support categories only the
inward-finite orientation is automatic.  These recollements admit bounded
derived lifts.  Under an additional finite-resolution condition the derived
finite-support categories have right Serre functors, and derived Kan
extensions satisfy a right-Serre exchange.  In finite boxes we further
construct a functorial rank-layer resolution comparing the left and right Kan
sections; Nakayama--Serre duality transforms it into an explicit costandard
rank complex.  The exceptional top category $\mathcal H_n(\mathbf t)$ is
treated separately via second cosyzygies.
\end{abstract}

\keywords{multigraded local cohomology, positively determined module, incidence algebra, finite-support representation, Serre subcategory, torsion pair, recollement, right Serre functor, Kan extension, support-rank resolution}
\subjclass[2020]{13D45, 13C14, 13D30, 16E65, 16G20}

\maketitle

\begingroup
\setcounter{tocdepth}{1}
\tableofcontents
\endgroup

\section{Introduction}
Let $S=\Bbbk[x_1,\ldots,x_n]$ be the polynomial ring over a field $\Bbbk$ with the standard $\mathbb N^n$-grading, and denote by $\mathfrak m=(x_1,\ldots,x_n)$  its maximal homogeneous ideal.

Multigraded local cohomology modules are usually not finitely generated, but their graded structure is strongly constrained.  Miller's theory of finitely determined modules \cite{Miller} and the work of Brun--Fl\o ystad on the Auslander--Reiten translate of monomial rings \cite{BF} show that, for a finitely generated positively $\mathbf t$-determined module $M$ with $\mathbf t\in \mathbb N^n$, the shifted local cohomology module
\[
H^i_{\mathfrak m}(M(-\mathbf1))
\]
is negatively $\mathbf t$-determined and is controlled by the finite box
$P_{\mathbf t}=[\mathbf0,\mathbf t]$.  Here and throughout,
$\mathbb N=\{0,1,2,\ldots\}$, all inequalities between multidegrees are
understood coordinatewise, and
$\mathbf0=(0,\ldots,0)$ and $\mathbf1=(1,\ldots,1)$.

The aim of this paper is not to study such modules individually, but rather to elucidate the categorical structure of all local cohomology modules arising in a fixed cohomological degree.  For $0\le i\le n$, let $\mathcal H_i(\mathbf t)$ be the full subcategory of negatively $\mathbf t$-determined modules consisting of the modules $H^i_{\mathfrak m}(M(-\mathbf1))$, where $M$ ranges over all positively $\mathbf t$-determined modules, and set
\[
\mathcal H_i=\bigcup_{\mathbf t\in \mathbb N^n}\mathcal H_i(\mathbf t).
\]
This shifts the focus from the functorial description of individual modules in \cite{BF} to an intrinsic description of these image categories, the resulting filtration, and the homological gluing between successive layers.

To state the first main result, write $[n]=\{1,\ldots,n\}$.  For
$\mathbf a\in\mathbb Z^n$, put
\[
\operatorname{supp}(\mathbf a)=\{j\in[n]\mid a_j>0\},
\qquad
\ell(\mathbf a)=|\operatorname{supp}(\mathbf a)|.
\]
We call $\ell(\mathbf a)$ the \emph{support rank}.  Accordingly, every
``rank layer'' or ``rank stratum'' below refers to support rank, not to the
ordinary rank function of a product of chains, whose value is the sum of the
coordinates.  For an arbitrary poset $P$, regarded as a category, put
\[
\Rep(P):=\operatorname{Fun}(P,\Bbbk\text{-}\modcat),
\]
where $\Bbbk\text{-}\modcat$ is the category of finite-dimensional
$\Bbbk$-vector spaces.  No finite-generation condition is imposed on objects
of $\Rep(P)$; in particular,   $\Rep(P)$ is an abelian category.  Let $\Rep_{\mathrm{fs}}(P)$ be the full subcategory of $\Rep(P)$ consisting of
representations with finite support
\[
\operatorname{supp}V=\{p\in P\mid V_p\ne0\}.
\]
Thus $\Rep_{\mathrm{fs}}(P)=\Rep(P)$ whenever $P$ is finite.  Restriction to
$P_{\mathbf t}$ identifies both the positively and negatively
$\mathbf t$-determined categories with $\Rep(P_{\mathbf t})$.
For $0\le q\le n$, put
\[
U_q(\mathbf t)
=
\{\mathbf a\in P_{\mathbf t}\mid \ell(\mathbf a)\ge q\}.
\]
For $0\le i<n$, set $q=n-i$.  The first main result is the support-theoretic
classification
\begin{equation}\label{eq:intro-classification}
\boxed{
\mathcal H_i(\mathbf t)\simeq\Rep(U_q(\mathbf t)).}
\end{equation}

The homological input is the Brun--Fl\o ystad Nakayama description together
with Auslander regularity of incidence algebras of finite distributive
lattices, due to Iyama--Marczinzik \cite{IM}.  Here
$\Lambda_{\mathbf t}$ denotes the incidence algebra of $P_{\mathbf t}$ in
the Brun--Fl\o ystad convention, and $D=\Hom_\Bbbk(-,\Bbbk)$ denotes ordinary
finite-dimensional duality.  Under the finite-box equivalences, the
Brun--Fl\o ystad description \cite{BF} identifies the image of the shifted
local-cohomology functor $M\mapsto H^i_{\mathfrak m}(M(-\mathbf1))$ with
$D\bigl(\EssIm\Ext^q_{\Lambda_{\mathbf t}}(-,\Lambda_{\mathbf t})\bigr)$.
The Auslander condition identifies
$\EssIm\Ext^q_{\Lambda_{\mathbf t}}(-,\Lambda_{\mathbf t})$ with the right
modules of grade at least $q$.  Since the right simple at $\mathbf a$ has
grade $\ell(\mathbf a)$, this category corresponds precisely to the
representations supported on $U_q(\mathbf t)$, yielding the classification
above.  The squarefree case is also closely related to Yanagawa's equivalence
between squarefree modules and modules over the incidence algebra of the
Boolean lattice, together with the associated derived dualities
\cite{Yanagawa}.
Recall that a full subcategory of an abelian category is a \emph{Serre
subcategory} if it is closed under subobjects, quotients, and extensions.
The classification above yields the Serre filtration
\[
\mathcal H_0\subsetneq\mathcal H_1\subsetneq\cdots\subsetneq
\mathcal H_{n-1}\subseteq\mathcal A^-,
\]
where $\mathcal A^-$ is the union of the negatively $\mathbf t$-determined
categories.  The top category behaves differently: under the finite-box
equivalence, $\mathcal H_n(\mathbf t)$ corresponds to the category of second
cosyzygies in $\Rep(P_{\mathbf t})$.  Moreover,
\[
\mathcal H_{n-1}(\mathbf t)\cap\mathcal H_n(\mathbf t)=0,
\]
and $\mathcal H_n(\mathbf t)$ is extension-closed but, for
$\mathbf t\ne\mathbf0$, is not an abelian category.

For $0\le q\le n$, put
\[
L_q(\mathbf t)=\{\mathbf a\in P_{\mathbf t}\mid\ell(\mathbf a)=q\},
\qquad
L_q=\{\mathbf a\in\mathbb N^n\mid\ell(\mathbf a)=q\}.
\]
For $1\le q\le n$, set
\[
D_{q-1}(\mathbf t)=\{\mathbf a\in P_{\mathbf t}\mid\ell(\mathbf a)<q\},
\qquad
D_{q-1}=\{\mathbf a\in\mathbb N^n\mid\ell(\mathbf a)<q\}.
\]
For $F\subseteq[n]$, write $
\mathbb N^F:=\{\alpha:F\longrightarrow\mathbb N\}
\cong\prod_{j\in F}\mathbb N,$
with pointwise addition and pointwise order.

The Serre filtrations are naturally organized by order-ideal recollement.
For $0\le i\le n-2$, the decomposition
$U_{n-i-1}=L_{n-i-1}\sqcup U_{n-i}$ yields, globally,
\begin{equation}\label{eq:intro-global-recollement}
\begin{tikzcd}[column sep=5.0em]
\Rep_{\mathrm{fs}}(L_{n-i-1})
  \arrow[r, "i_*" description]
& \mathcal H_{i+1}
  \arrow[l, bend right=20, "i^*"']
  \arrow[l, bend left=20, "i^!"]
  \arrow[r, "j^*" description]
& \mathcal H_i
  \arrow[l, bend left=32, "j_!"']
  \arrow[l, bend right=32, "j_*"]
\end{tikzcd}
\end{equation}
Its associated TTF triple contains the hereditary torsion pair
\[
\bigl(\mathcal H_i,\Rep_{\mathrm{fs}}(L_{n-i-1})\bigr)
\quad\text{in }\mathcal H_{i+1}.
\]
For every $0\le i\le n-1$, iterating these torsion pairs yields a canonical
filtration of each object of $\mathcal H_i$, whose successive quotients are
supported on the pure support-rank strata.  There is also a recollement relating $\mathcal A^-$,
$\mathcal H_i$, and the complementary lower ideal $D_{n-i-1}$.
For finite posets both complementary recollement orientations exist; for the
global finite-support categories only the inward-finite orientation is
automatic, since the opposite Kan extension may acquire infinite support.

Taking the associated Gabriel quotients recovers, for $1\le i<n$ and
$q=n-i$,
\begin{equation}\label{eq:intro-quotient}
\mathcal H_i(\mathbf t)/\mathcal H_{i-1}(\mathbf t)
\simeq\Rep(L_q(\mathbf t)),
\end{equation}
and globally
\[
\mathcal H_i/\mathcal H_{i-1}
\simeq\Rep_{\mathrm{fs}}(L_q)
\simeq
\prod_{\substack{F\subseteq[n]\\|F|=q}}
\Rep_{\mathrm{fs}}(\mathbb N^F).
\]
The complementary recollements similarly give
\[
\mathcal A^-_{\mathbf t}/\mathcal H_i(\mathbf t)
\simeq\Rep(D_{q-1}(\mathbf t)),
\qquad
\mathcal A^-/\mathcal H_i
\simeq\Rep_{\mathrm{fs}}(D_{q-1}).
\]

The recollement \eqref{eq:intro-global-recollement} and its finite-box analogues
lift to bounded derived categories.  The global lift is
\begin{equation}\label{eq:intro-derived-recollement}
\begin{tikzcd}[column sep=4.2em]
D^b\!\left(\Rep_{\mathrm{fs}}(L_{n-i-1})\right)
  \arrow[r, "i_*" description]
& D^b(\mathcal H_{i+1})
  \arrow[l, bend right=20, "i^*"']
  \arrow[l, bend left=20, "\mathbf R i^!"]
  \arrow[r, "j^*" description]
& D^b(\mathcal H_i)
  \arrow[l, bend left=20, "j_!"']
  \arrow[l, bend right=20, "\mathbf R j_*"]
\end{tikzcd}
\qquad 0\le i\le n-2.
\end{equation}
Thus the Serre filtration is not merely a chain of subcategories: consecutive
layers are glued by abelian recollements, and the same gluing persists at the
bounded-derived level.  This recollement tower terminates at
$\mathcal H_{n-1}$; the category $\mathcal H_n$ lies outside this picture and
is treated separately.

For the full inclusion $j:L_q(\mathbf t)\hookrightarrow U_q(\mathbf t),$ the restriction functor $
j^*:\Rep(U_q(\mathbf t))\longrightarrow\Rep(L_q(\mathbf t))$
admits two exact fully faithful Kan sections
\[
j_!,j_*:\Rep(L_q(\mathbf t))\longrightarrow\Rep(U_q(\mathbf t)).
\]
Thus the quotient stratum $\Rep(L_q(\mathbf t))$ has two canonical
realizations inside the ambient category $\Rep(U_q(\mathbf t))$.  Our second
main contribution is a functorial homological comparison of these two
realizations.  For $q\le r\le n$, we construct exact functors
\[
C_r:\Rep(L_q(\mathbf t))\longrightarrow\Rep(U_q(\mathbf t)),
\]
built from the intermediate support-rank strata $L_r(\mathbf t)$, and prove
that every $V\in\Rep(L_q(\mathbf t))$ fits into a functorial exact sequence
\begin{equation}\label{eq:intro-rank-resolution}
0\longrightarrow C_n(V)\longrightarrow\cdots\longrightarrow
C_q(V)=j_!V\longrightarrow j_*V\longrightarrow0.
\end{equation}
Vertexwise, this sequence decomposes as a direct sum of augmented simplex
complexes; when $V$ is projective, it gives a projective resolution of
$j_*V$.

Finally, we introduce right-Serre finite posets, extending the
Nakayama--Serre picture beyond finite posets.  Inward finiteness ensures that
the injective envelopes of vertex simples have finite support, while an
additional finite-resolution condition ensures that finite-support objects
admit bounded resolutions by finite direct sums of representable projectives;
together these conditions yield a right Serre functor on
$D^b(\Rep_{\mathrm{fs}}(P))$.  If
$f:Q\hookrightarrow P$ is a full inclusion of right-Serre finite posets and the
derived Kan extensions restrict to adjoints on the bounded derived
finite-support categories, right Serre duality, together with the two
adjunctions, gives the canonical exchange
\begin{equation}
\mathbb S_P^R\circ\mathbf L f_!
\simeq
\mathbf R f_*\circ\mathbb S_Q^R.
\end{equation}
For finite boxes the right Serre functor is the derived Nakayama functor, so
this exchange converts the rank-layer resolution constructed above term by term
into a costandard rank complex.  This is the role of Nakayama--Serre duality in the
paper; the categorical recollement structure is established independently of
it.

The paper is organized as follows.  Section~\ref{sec:determined} constructs the
finite-box and global representation models.  Sections~\ref{sec:homological}
and~\ref{sec:finitebox} establish the intrinsic descriptions, the Serre
filtration below top degree, and the exceptional behavior in top degree.
Section~\ref{sec:general-fs-theory} develops the general finite-support
recollement, derived-lift, and right-Serre theory.  We first apply this theory
to finite boxes in Section~\ref{sec:finite-box-recollement}, where we obtain
the two-parameter stratification, the quotient formulas, the rank-layer
resolution, and the Nakayama--Serre exchange.  Finally,
Section~\ref{sec:global-recollement-serre} globalizes the quotient, torsion,
derived, and right-Serre structures and explains which finite-box
constructions fail to preserve finite support globally.

We close this section with some notation and conventions used later.  Let $\varepsilon_j$ denote the
$j$th standard basis vector of $\mathbb N^n$.  A subposet $Q$ of a
poset $P$ is called an \emph{upper order ideal} if $x\in Q$ and
$x\le y$ in $P$ imply $y\in Q$; it is called a \emph{lower order ideal}
if $x\in Q$ and $y\le x$ in $P$ imply $y\in Q$.  A subposet
$Q\subseteq P$ is called \emph{convex} if, whenever $x,z\in Q$ and
$x\le y\le z$ in $P$, one has $y\in Q$.  Whenever $Q$ is a convex
subposet of a finite poset $P$, extension by zero defines an exact fully
faithful functor $\Rep(Q)\to\Rep(P)$ whose essential image is the full
subcategory consisting of representations $V$ with
$\operatorname{supp}V\subseteq Q$.  We identify $\Rep(Q)$ with this
essential image.  In particular, this convention applies when $Q$ is an
upper or a lower order ideal.  We use the analogous convention for
finite-support representations.

\section{Determined modules and finite-box models}\label{sec:determined}
In this section, we set up the categorical framework for the local-cohomology image categories by recalling positively and negatively $\mathbf t$-determined modules and realizing them through finite-box representation categories.

Let $\LFgr$ be the abelian category of locally finite $\mathbb Z^n$-graded
$S$-modules, i.e., modules each of whose graded components is finite-dimensional over
$\Bbbk$. For any $M\in\LFgr$, let
\[
\Dgr M=\bigoplus_{\mathbf a\in \mathbb Z^n}\Hom_\Bbbk(M_{-\mathbf a},\Bbbk)
\]
denote the graded $\Bbbk$-dual of $M$, which inherits a natural
$\mathbb Z^n$-graded $S$-module structure. The functor $\Dgr$ is the
multigraded Matlis duality; it restricts to an exact contravariant equivalence
between finitely generated and Artinian $\mathbb Z^n$-graded
$S$-modules.  This is the standard multigraded form of graded Matlis duality;
cf. \cite[Theorem~3.6.17]{BH}. We write
$D=\Hom_\Bbbk(-,\Bbbk)$ for ordinary finite-dimensional duality.

Let $\mathcal A^+$ be the full subcategory of $\LFgr$ consisting of finitely
generated $\mathbb Z^n$-graded $S$-modules $M$ such that
$M_{\mathbf a}=0$ for $\mathbf a\notin\mathbb N^n$.  For $\mathbf t\in\mathbb N^n$, let $\mathcal A^+_{\mathbf t}$ be the full
subcategory of $\mathcal A^+$ consisting of the positively
$\mathbf t$-determined modules, that is, those modules $M$ for which
multiplication by $x_j$ induces an isomorphism
\[
x_j:M_{\mathbf a}\xrightarrow{\sim}M_{\mathbf a+\varepsilon_j}
\]
for every $\mathbf a\in\mathbb N^n$ and every $j\in[n]$ such that
$a_j\ge t_j$.
The class $\mathcal A^+_{\mathbf t}$ is closed under kernels, cokernels, and
extensions, hence is abelian. Let $P_{\mathbf t}=[\mathbf0,\mathbf t]
=\{\mathbf a\in\mathbb N^n\mid \mathbf0\le\mathbf a\le\mathbf t\}$,
viewed as a subposet of $\mathbb N^n$.

\begin{proposition}[Positive finite-box equivalence]\label{prop:positive-box}
For $M\in\mathcal A^+_{\mathbf t}$, let $\pi_{\mathbf t}(M)$ be the
representation of $P_{\mathbf t}$ with
$\pi_{\mathbf t}(M)_{\mathbf a}=M_{\mathbf a}$ and structure map
$M_{\mathbf a}\to M_{\mathbf b}$ given by multiplication by
$x^{\mathbf b-\mathbf a}$ whenever $\mathbf a\le\mathbf b$.
Then restriction induces an exact equivalence
\[
\pi_{\mathbf t}:\mathcal A^+_{\mathbf t}\xrightarrow{\sim}\Rep(P_{\mathbf t}).
\]
Moreover $\mathcal A^+=\bigcup_{\mathbf t\in\mathbb N^n}\mathcal A^+_{\mathbf t}$.
\end{proposition}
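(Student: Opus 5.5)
The plan is to construct an explicit quasi-inverse to $\pi_{\mathbf t}$ and to check that both composites are naturally isomorphic to the identity. Exactness of $\pi_{\mathbf t}$ is immediate, since kernels and cokernels in $\mathcal A^+_{\mathbf t}$ (which is closed under them in $\LFgr$) and in $\Rep(P_{\mathbf t})$ are both computed degreewise. Functoriality is also clear: a degree-preserving $S$-linear map commutes with multiplication by monomials. So the content lies in full faithfulness and essential surjectivity.

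\textbf{Step 1: the extension functor.} For $\mathbf a\in\mathbb Z^n$, define $\rho(\mathbf a)\in P_{\mathbf t}$ by $\rho(\mathbf a)_j=\min(\max(a_j,0),t_j)$. Given $V\in\Rep(P_{\mathbf t})$, define a graded module $E(V)$ by
\[
E(V)_{\mathbf a}=V_{\rho(\mathbf a)}\ \text{for }\mathbf a\in\mathbb N^n,\qquad E(V)_{\mathbf a}=0\ \text{otherwise}.
\]
Let $x_j$ act from degree $\mathbf a$ to degree $\mathbf a+\varepsilon_j$ by the structure map $V_{\rho(\mathbf a)}\to V_{\rho(\mathbf a+\varepsilon_j)}$. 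This map is the identity when $a_j\ge t_j$, because then $\rho(\mathbf a)=\rho(\mathbf a+\varepsilon_j)$. Since $\rho$ is order preserving and $V$ is a functor on a poset, the actions of the $x_j$ commute, so $E(V)$ is a $\mathbb Z^n$-graded $S$-module. It is locally finite and vanishes outside $\mathbb N^n$. It is generated by $\bigoplus_{\mathbf a\in P_{\mathbf t}}V_{\mathbf a}$, which is finite-dimensional: for $\mathbf a\in\mathbb N^n$, the multiplication $x^{\mathbf a-\rho(\mathbf a)}$ maps $E(V)_{\rho(\mathbf a)}$ isomorphically onto $E(V)_{\mathbf a}$, since it is a composite of identities. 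The same observation shows that the defining isomorphism condition for positive $\mathbf t$-determinacy holds. Hence $E(V)\in\mathcal A^+_{\mathbf t}$, and $E$ is clearly a functor with $\pi_{\mathbf t}E=\mathrm{id}$.

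\textbf{Step 2: $E\pi_{\mathbf t}\cong\mathrm{id}$.} For $M\in\mathcal A^+_{\mathbf t}$ and $\mathbf a\in\mathbb N^n$, iterating the determinacy condition coordinate by coordinate shows that $x^{\mathbf a-\rho(\mathbf a)}:M_{\rho(\mathbf a)}\to M_{\mathbf a}$ is an isomorphism. Each step raises some coordinate $j$ from a value $\ge t_j$, so the hypothesis applies at every stage. These isomorphisms assemble into $E\pi_{\mathbf t}(M)\to M$, and they are compatible with the $x_j$-actions because multiplication by monomials commutes. They are natural in $M$.

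This is the main bookkeeping point: one must check that the square relating $x_j$ on $E\pi_{\mathbf t}(M)$ (namely $x^{\rho(\mathbf a+\varepsilon_j)-\rho(\mathbf a)}$ on the box) and $x_j$ on $M$ commutes. This reduces to the identity of monomials
\[
x^{\mathbf a+\varepsilon_j-\rho(\mathbf a+\varepsilon_j)}\,x^{\rho(\mathbf a+\varepsilon_j)-\rho(\mathbf a)}=x_j\,x^{\mathbf a-\rho(\mathbf a)}.
\]
Together with Step 1, this shows that $\pi_{\mathbf t}$ is an equivalence with quasi-inverse $E$. Full faithfulness also follows directly: a morphism of modules is determined by its components on the box, because the box generates the module, and any family of box components commuting with the structure maps extends via $E$.

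\textbf{Step 3: $\mathcal A^+=\bigcup_{\mathbf t}\mathcal A^+_{\mathbf t}$.} Let $M\in\mathcal A^+$. Choose finitely many homogeneous generators and a finite presentation $F_1\to F_0\to M\to0$ by free modules $\bigoplus S(-\mathbf b)$, where all the shifts $\mathbf b$ lie in $\mathbb N^n$ because $M$ lives in $\mathbb N^n$. Let $\mathbf t$ be the coordinatewise maximum (join) of all generator degrees of $F_0$ and $F_1$. Each $S(-\mathbf b)$ with $\mathbf b\le\mathbf t$ is positively $\mathbf t$-determined, since multiplication by $x_j$ is bijective from degree $\mathbf a$ to $\mathbf a+\varepsilon_j$ once $a_j\ge b_j$. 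Then $M$ is a cokernel in the abelian category $\mathcal A^+_{\mathbf t}$, using the stated closure under cokernels. The reverse inclusion holds by definition.
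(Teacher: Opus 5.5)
Your proof is correct and follows the paper's route: your extension functor $E$ is exactly the paper's quasi-inverse $V\mapsto (V_{\mathbf a\wedge\mathbf t})_{\mathbf a\in\mathbb N^n}$, since $\rho(\mathbf a)=\mathbf a\wedge\mathbf t$ on $\mathbb N^n$, and you carry out the verifications that the paper leaves to the cited literature. The only difference is in the equality $\mathcal A^+=\bigcup_{\mathbf t}\mathcal A^+_{\mathbf t}$, where you give a self-contained argument using a finite free presentation and the closure of $\mathcal A^+_{\mathbf t}$ under cokernels, while the paper cites Miller.
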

\begin{proof}
This is the standard finite-box description of positively determined modules
\cite{BF,Miller}.  Explicitly, if $\mathbf a\wedge\mathbf t$ denotes the
coordinatewise minimum, the quasi-inverse sends a representation $V$ to the
module whose component in degree $\mathbf a\in\mathbb N^n$ is
$V_{\mathbf a\wedge\mathbf t}$, with identity structure maps after a
coordinate reaches $t_j$.  Every finitely generated multigraded module is positively
$\mathbf t$-determined for $\mathbf t$ sufficiently large \cite{Miller}.
\end{proof}

Following Brun--Fl\o ystad \cite[Remark~2.9]{BF}, an Artinian graded module
$N$ is \emph{negatively $\mathbf t$-determined} if
\[
N_{\mathbf a}=0\quad\text{when some }a_j\ge t_j+1,
\qquad
x_j:N_{\mathbf a}\xrightarrow{\sim}N_{\mathbf a+\varepsilon_j}
\quad(a_j\le-1).
\]
Let $\mathcal A^-_{\mathbf t}$ be the resulting abelian full exact subcategory
of Artinian graded modules and put
$\mathcal A^-=\bigcup_{\mathbf t\in\mathbb N^n}\mathcal A^-_{\mathbf t}$.

\begin{proposition}[Negative finite-box equivalence]\label{prop:negative-box}
Restriction to nonnegative degrees induces an exact equivalence
\[
\rho_{\mathbf t}:\mathcal A^-_{\mathbf t}
\xrightarrow{\sim}\Rep(P_{\mathbf t}).
\]
\end{proposition}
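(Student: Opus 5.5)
Two routes are available. One is to dualize \Cref-free: transport Proposition~\ref{prop:positive-box} through graded Matlis duality. The other is to prove the equivalence directly by constructing a quasi-inverse that extends a box representation to all of $\mathbb Z^n$. I would take the direct route and use duality only as a consistency check.

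\emph{Exactness and well-definedness of $\rho_{\mathbf t}$.} For $N\in\mathcal A^-_{\mathbf t}$ and $\mathbf a\le\mathbf b$ in $P_{\mathbf t}$, set $\rho_{\mathbf t}(N)_{\mathbf a}=N_{\mathbf a}$, with structure map given by multiplication by $x^{\mathbf b-\mathbf a}$. Each $N_{\mathbf a}$ is finite-dimensional, since Artinian graded modules are locally finite. Functoriality follows because multiplication is commutative and associative. Exactness holds because kernels and cokernels in graded modules are computed degreewise.

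\emph{The quasi-inverse.} Given $V\in\Rep(P_{\mathbf t})$, let $\mathbf a\vee\mathbf0$ denote the coordinatewise maximum with $\mathbf 0$. Define $E(V)_{\mathbf a}=V_{\mathbf a\vee\mathbf0}$ when $\mathbf a\le\mathbf t$, and $E(V)_{\mathbf a}=0$ otherwise. For $x_j\colon E(V)_{\mathbf a}\to E(V)_{\mathbf a+\varepsilon_j}$ there are three cases:
\begin{itemize}
\item if $a_j\le-1$, the map is the identity, since $(\mathbf a+\varepsilon_j)\vee\mathbf0=\mathbf a\vee\mathbf0$;
\item if $0\le a_j<t_j$, the map is the structure map $V_{\mathbf a\vee\mathbf0}\to V_{(\mathbf a+\varepsilon_j)\vee\mathbf0}$;
\item if $a_j=t_j$, the map is zero.
\end{itemize}
The relations $x_jx_k=x_kx_j$ reduce to commutativity of squares in $V$, together with the identity and zero cases, so $E(V)$ is a graded $S$-module. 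The defining conditions of negative $\mathbf t$-determinedness hold by construction.

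\emph{Artinian property.} This is the one non-formal point. $E(V)$ vanishes outside $\mathbf a\le\mathbf t$, and its Hilbert function is constant along negative directions. I would show that its graded Matlis dual $\Dgr E(V)$ is finitely generated: it is positively determined up to a shift and is generated in the finitely many degrees $-\mathbf a$ for $\mathbf a\in P_{\mathbf t}$. Indeed, in $\Dgr E(V)$ the dual of $x_j$ is the transpose of the identity whenever the source coordinate is below $0$, so every element is obtained from a box degree by multiplication by monomials. Matlis duality then gives that $E(V)$ is Artinian. Alternatively, one can recognise $\Dgr E(V)$, after the shift by $\mathbf t$, as the module $\pi_{\mathbf t}^{-1}$ applied to the dual representation, and invoke Proposition~\ref{prop:positive-box} directly.

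\emph{The two composites.} We have $\rho_{\mathbf t}E=\mathrm{id}$ on the nose. For $N\in\mathcal A^-_{\mathbf t}$, a natural isomorphism $E\rho_{\mathbf t}N\cong N$ is given in degree $\mathbf a$ by the inverse of the isomorphism $x^{(\mathbf a\vee\mathbf0)-\mathbf a}\colon N_{\mathbf a}\to N_{\mathbf a\vee\mathbf0}$. This map is a composite of maps $x_j$ with $a_j\le-1$, hence invertible by hypothesis, and the choice of path is irrelevant by commutativity. The same isomorphisms show that $\rho_{\mathbf t}$ is fully faithful: a degree-preserving $S$-linear map is determined by its restriction to the box, and any map of representations extends uniquely, since compatibility with $x_j$ at $a_j=t_j$ is automatic because the target vanishes.

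The main obstacle is the Artinian check, which is where local finiteness must be converted into the descending chain condition. The cleanest way to handle it is by duality with Proposition~\ref{prop:positive-box}.
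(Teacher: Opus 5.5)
Your proposal is correct and follows essentially the same route as the paper. The paper's quasi-inverse $\sigma_{\mathbf t}$ is exactly your $E$: it takes the value $V_{\mathbf b^+}$ for $\mathbf b\le\mathbf t$, uses identities in negative directions and zero outside the box. Full faithfulness comes from the same canonical isomorphisms $N_{\mathbf b}\cong N_{\mathbf b^+}$, and the Artinian property is obtained, as in your argument, by observing that the graded dual is generated in the finite box $[-\mathbf t,\mathbf0]$ and then applying graded Matlis duality; your write-up only spells out the module structure and the two composites in more detail.
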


\begin{proof}
For $\mathbf b\in\mathbb Z^n$, put
$\mathbf b^+=(\max\{b_1,0\},\ldots,\max\{b_n,0\})$.  If
$\mathbf b\le\mathbf t$, negative determination gives a canonical isomorphism
$N_{\mathbf b}\cong N_{\mathbf b^+}$.  Hence restriction is full
and faithful.  Conversely, for $V\in\Rep(P_{\mathbf t})$ define
\[
(\sigma_{\mathbf t}V)_{\mathbf b}=
\begin{cases}
V_{\mathbf b^+},&\mathbf b\le\mathbf t,\\
0,&\text{otherwise},
\end{cases}
\]
using identity maps in negative directions and the structure maps of $V$ in
the box.  Then $\sigma_{\mathbf t}V$ is negatively $\mathbf t$-determined.
Its graded dual is generated in the finite box $[-\mathbf t,\mathbf0]$, so it
is finitely generated; graded Matlis duality therefore implies that
$\sigma_{\mathbf t}V$ is Artinian.  This gives the quasi-inverse.
\end{proof}

\begin{proposition}[Global finite-support model]\label{prop:global-fs}
Restriction to nonnegative degrees induces an exact equivalence
\[
\rho:\mathcal A^-\xrightarrow{\sim}\Rep_{\mathrm{fs}}(\mathbb N^n).
\]
\end{proposition}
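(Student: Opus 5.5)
The plan is to build $\rho$ from the finite-box equivalences $\rho_{\mathbf t}$ of Proposition~\ref{prop:negative-box} and check that they are compatible as $\mathbf t$ grows. For $N\in\mathcal A^-$, define $\rho(N)$ as the representation of $\mathbb N^n$ with $\rho(N)_{\mathbf a}=N_{\mathbf a}$ and structure maps given by multiplication by monomials. If $N\in\mathcal A^-_{\mathbf t}$, then $N_{\mathbf a}=0$ whenever some $a_j\ge t_j+1$. Hence $\operatorname{supp}\rho(N)\subseteq P_{\mathbf t}$ is finite, so $\rho$ lands in $\Rep_{\mathrm{fs}}(\mathbb N^n)$. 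Exactness holds because kernels and cokernels in both categories are computed degreewise.

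I would next record the compatibility. If $\mathbf t\le\mathbf t'$, then $\mathcal A^-_{\mathbf t}\subseteq\mathcal A^-_{\mathbf t'}$, since the vanishing condition for $\mathbf t$ implies the one for $\mathbf t'$ and the isomorphism condition in negative directions does not depend on $\mathbf t$. Moreover $\rho_{\mathbf t'}$ restricted to $\mathcal A^-_{\mathbf t}$ is $\rho_{\mathbf t}$ followed by extension by zero from $P_{\mathbf t}$ to $P_{\mathbf t'}$. This extension is legitimate because $P_{\mathbf t}$ is a lower order ideal of $P_{\mathbf t'}$, so the convention of the introduction applies.

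Full faithfulness then follows. Given $N,N'\in\mathcal A^-$, choose a common $\mathbf t$ with both in $\mathcal A^-_{\mathbf t}$ (take the coordinatewise maximum). Morphisms of $S$-modules between them are the same as morphisms in $\mathcal A^-_{\mathbf t}$, which is a full subcategory. Representation maps between finitely supported representations with support in $P_{\mathbf t}$ are the same as maps in $\Rep(P_{\mathbf t})$. Proposition~\ref{prop:negative-box} therefore gives the bijection on Hom sets.

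Essential surjectivity: a finitely supported $V$ has support in some $P_{\mathbf t}$, since a finite set is bounded above. Then $V$ is the extension by zero of $V|_{P_{\mathbf t}}$, and $\sigma_{\mathbf t}(V|_{P_{\mathbf t}})\in\mathcal A^-_{\mathbf t}\subseteq\mathcal A^-$ maps to $V$. The one point needing care is the zero extension. Extension by zero from the lower order ideal $P_{\mathbf t}$ to all of $\mathbb N^n$ is a genuine representation: for $\mathbf a\le\mathbf b$ with $\mathbf a\in P_{\mathbf t}$ and $\mathbf b\notin P_{\mathbf t}$, the structure map is zero, and composites remain consistent because the support is closed downward in the relevant sense. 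I expect no real obstacle beyond this bookkeeping.
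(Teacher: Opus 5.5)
Your proposal is correct and follows the paper's own argument. The paper's proof observes three things: every object of $\mathcal A^-$ lies in some $\mathcal A^-_{\mathbf t}$, every finite-support representation is supported in some box $P_{\mathbf t}$, and the finite-box reconstructions of Proposition~\ref{prop:negative-box} are compatible with enlarging the box. You make exactly these points explicit, using a common box for full faithfulness and $\sigma_{\mathbf t}$ followed by extension by zero along the lower ideal $P_{\mathbf t}$ for essential surjectivity.
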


\begin{proof}
Every object of $\mathcal A^-$ lies in some finite box, and every finite-support
representation is supported in one.  The preceding reconstructions are
compatible with enlarging the box.
\end{proof}

For a $\mathbb{Z}^n$-graded module $M$, we use the shift convention
$M(-\mathbf c)_{\mathbf a}=M_{\mathbf a-\mathbf c}$. By \cite[Proposition~2.8 and Remark~2.9]{BF}, if
$M\in\mathcal A^+_{\mathbf t}$ then
$H^i_{\mathfrak m}(M(-\mathbf1))\in\mathcal A^-_{\mathbf t}$.  We therefore
write
\[
\Phi_i^{\mathbf t}:\mathcal A^+_{\mathbf t}\longrightarrow
\mathcal A^-_{\mathbf t},\qquad
M\longmapsto H^i_{\mathfrak m}(M(-\mathbf1)),
\]
and define
\[
\mathcal H_i(\mathbf t)=\EssIm\Phi_i^{\mathbf t},
\qquad
\mathcal H_i=\bigcup_{\mathbf t\in\mathbb N^n}\mathcal H_i(\mathbf t).
\]
If $\mathbf t\le\mathbf u$, then
$\mathcal H_i(\mathbf t)\subseteq\mathcal H_i(\mathbf u)$.

\section{Incidence algebras, Auslander regularity, and grade}\label{sec:homological}
In this section, we collect the basic homological background needed for the first main result of the next section.

Throughout the paper, all complexes are cohomologically graded.  Thus, for a cochain complex $X$, we have $(X[r])^m=X^{m+r}$.  For a cochain complex $X$ of finite-dimensional $\Bbbk$-vector spaces, its dual $DX$ is graded by $(DX)^m=D(X^{-m})$.  A projective resolution $\cdots\to P_1\to P_0\to M\to0$ is regarded as a cochain complex with $P_r$ in degree $-r$.  Thus $\Hom(P_\bullet,A)$ is concentrated in nonnegative degrees, and applying $D$ reverses those degrees.

\subsection{The derived Nakayama description}

For a finite-dimensional $\Bbbk$-algebra $A$, we use the classical Nakayama
functor on finitely generated projective left $A$-modules in the form
$\nu_A=D\Hom_A(-,A)$, equivalently $DA\otimes_A-$; see \cite{ARS}.  Its
derived form is $DA\otimes_A^{\mathbf L}-$.  For a finite poset $P$, its
\emph{incidence algebra} $A_P$ over $\Bbbk$
is the $\Bbbk$-vector space with basis
\[
\{e_{x,y}\mid x,y\in P,\ y\le x\},
\]
whose multiplication is determined by
\[
e_{x,y}e_{z,w}=\delta_{y,z}e_{x,w}.
\]
Its identity element is $\sum_{x\in P}e_{x,x}$.  This is the convention
used by Brun--Fl\o ystad \cite{BF}.  A left $A_P$-module $M$ determines a
covariant representation of $P$ by setting $M_x=e_{x,x}M$ and letting the
map $M_y\to M_x$ for $y\le x$ be multiplication by $e_{x,y}$.
Consequently,
\[
A_P\text{-mod}\simeq\Rep(P),
\qquad
\modcat\text{-}A_P\simeq\Rep(P^{\mathrm{op}}).
\]
All module categories over finite-dimensional $\Bbbk$-algebras below
consist of finite-dimensional modules. We put
$\Lambda_{\mathbf t}=A_{P_{\mathbf t}}$ and use these equivalences for
$P=P_{\mathbf t}$ throughout.  For
$\mathbf a\in P_{\mathbf t}$, let $S_{\mathbf a}$ denote the vertex simple in
$\Rep(P_{\mathbf t})$, and let $R_{\mathbf a}$ denote the simple right
$\Lambda_{\mathbf t}$-module corresponding to the vertex simple at
$\mathbf a$ in $\Rep(P_{\mathbf t}^{\mathrm{op}})$.

For later comparison with the source and target categories, recall the
finite-box equivalences from Propositions~\ref{prop:positive-box} and
\ref{prop:negative-box}:
\[
\pi_{\mathbf t}:\mathcal A^+_{\mathbf t}\xrightarrow{\sim}\Rep(P_{\mathbf t}),
\qquad
\rho_{\mathbf t}:\mathcal A^-_{\mathbf t}\xrightarrow{\sim}\Rep(P_{\mathbf t}).
\]
Consider the order isomorphism
\[
\tau_{\mathbf t}:P_{\mathbf t}\longrightarrow P_{\mathbf t}^{\mathrm{op}},
\qquad
\tau_{\mathbf t}(\mathbf a)=\mathbf t-\mathbf a.
\]
Precomposition with $\tau_{\mathbf t}$ and its inverse gives mutually inverse
reindexing equivalences
\[
\tau_{\mathbf t}^*: \Rep(P_{\mathbf t}^{\mathrm{op}})
\longrightarrow \Rep(P_{\mathbf t}),
\qquad
\tau_{\mathbf t}^*: \Rep(P_{\mathbf t})
\longrightarrow \Rep(P_{\mathbf t}^{\mathrm{op}}),
\]
both given on vertices by
$$(\tau_{\mathbf t}^*V)_{\mathbf a}=V_{\mathbf t-\mathbf a}.$$
We use the same notation $\tau_{\mathbf t}^*$ for these two inverse functors.
Recall that
$D=\Hom_\Bbbk(-,\Bbbk)$ is the ordinary finite-dimensional $\Bbbk$-dual.  Vertexwise duality gives exact contravariant equivalences; equivalently,
viewing them as covariant functors from opposite categories,
\[
D:(\modcat\text{-}\Lambda_{\mathbf t})^{\mathrm{op}}
\xrightarrow{\sim}
\Lambda_{\mathbf t}\text{-mod},
\qquad
D:\Rep(P_{\mathbf t}^{\mathrm{op}})^{\mathrm{op}}
\xrightarrow{\sim}
\Rep(P_{\mathbf t}).
\]
This is distinct from the graded dual $\Dgr$ introduced in Section~\ref{sec:determined}.  Since $\tau_{\mathbf t}^2=\mathrm{id}$ and finite-dimensional duality commutes with reindexing, there is a natural isomorphism
\[
\tau_{\mathbf t}^*\circ D\circ\tau_{\mathbf t}^*\cong D.
\]
Thus the two order reversals occurring in the Brun--Fl\o ystad formula cancel.  In particular,
\[
D(X)_{\mathbf a}=D(X_{\mathbf a}),
\]
so $D$ preserves vertex support.

Let
\[
\mathcal T_i(\mathbf t):=\rho_{\mathbf t}(\mathcal H_i(\mathbf t))
\subseteq\Rep(P_{\mathbf t}).
\]
Since $\rho_{\mathbf t}:\mathcal A^-_{\mathbf t}\xrightarrow{\sim}\Rep(P_{\mathbf t})$
is an exact equivalence, it restricts to an exact equivalence
\[
\rho_{\mathbf t}|_{\mathcal H_i(\mathbf t)}:
\mathcal H_i(\mathbf t)\xrightarrow{\sim}\mathcal T_i(\mathbf t).
\]
Thus $\mathcal T_i(\mathbf t)$ is the representation-theoretic realization of
$\mathcal H_i(\mathbf t)$ inside the finite box.  The following statement is
the form of the Brun--Fl\o ystad Nakayama description that we shall use.

\begin{proposition}[Nakayama--Ext translation]\label{prop:naka-ext}
For $0\le i\le n$,
\[
\mathcal T_i(\mathbf t)
=
\EssIm\left(
D\Ext^{n-i}_{\Lambda_{\mathbf t}}(-,\Lambda_{\mathbf t})
:\Lambda_{\mathbf t}\text{-mod}\longrightarrow\Rep(P_{\mathbf t})
\right).
\]
\end{proposition}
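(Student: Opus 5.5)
The plan is to transport the Brun--Fl\o ystad Nakayama description \cite{BF} across the finite-box equivalences of Propositions~\ref{prop:positive-box} and~\ref{prop:negative-box}, and then check that the two order reversals cancel.

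\textbf{Step 1: the commutative square.} In \cite{BF}, for $M\in\mathcal A^+_{\mathbf t}$ the shifted local cohomology $H^i_{\mathfrak m}(M(-\mathbf 1))$ is computed by local duality and the graded dual $\Dgr$. Local duality expresses it as $\Dgr\Ext^{n-i}_S(M,S(-\mathbf 1))$. The positively $\mathbf t$-determined module $\Ext^{n-i}_S(M,S(-\mathbf 1))$, or a suitable twist of it, corresponds under $\pi_{\mathbf t}$ to the right module $\Ext^{n-i}_{\Lambda_{\mathbf t}}(\pi_{\mathbf t}M,\Lambda_{\mathbf t})$, after reindexing by $\tau_{\mathbf t}$. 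The underlying reason is that multigraded free $S$-modules $S(-\mathbf a)$ with $\mathbf a\in P_{\mathbf t}$ correspond to the representable projectives $\Lambda_{\mathbf t}e_{\mathbf a,\mathbf a}$. Hence a $\mathbf t$-determined free resolution of $M$ restricts to a projective resolution of $\pi_{\mathbf t}M$, and $\Hom_S(-,S)$ restricted to the box matches $\Hom_{\Lambda_{\mathbf t}}(-,\Lambda_{\mathbf t})$ up to the reflection $\mathbf a\mapsto\mathbf t-\mathbf a$. I would record this as a natural isomorphism
\[
\rho_{\mathbf t}\circ\Phi_i^{\mathbf t}\;\cong\;\tau_{\mathbf t}^*\circ D\circ\tau_{\mathbf t}^*\circ\Ext^{n-i}_{\Lambda_{\mathbf t}}(-,\Lambda_{\mathbf t})\circ\pi_{\mathbf t},
\]
citing \cite[Proposition~2.8, Remark~2.9]{BF} for the content. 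The two reversals come from two sources: one from $\Dgr$ (degree $\mathbf a\mapsto-\mathbf a$, then translated by the box), and one from $\Hom(-,S)$.

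\textbf{Step 2: cancel the reversals.} Applying the identity $\tau_{\mathbf t}^*\circ D\circ\tau_{\mathbf t}^*\cong D$ established above, the right-hand side becomes $D\Ext^{n-i}_{\Lambda_{\mathbf t}}(\pi_{\mathbf t}(-),\Lambda_{\mathbf t})$.

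\textbf{Step 3: take essential images.} Since $\pi_{\mathbf t}$ is an equivalence, hence essentially surjective onto $\Lambda_{\mathbf t}\text{-mod}\simeq\Rep(P_{\mathbf t})$, the essential image of $\rho_{\mathbf t}\circ\Phi_i^{\mathbf t}$, which is $\mathcal T_i(\mathbf t)$ by definition, equals the essential image of $D\Ext^{n-i}_{\Lambda_{\mathbf t}}(-,\Lambda_{\mathbf t})$. This step is formal. One point needs care: the essential image of $\Phi_i^{\mathbf t}$ is taken inside $\mathcal A^-_{\mathbf t}$, and $\rho_{\mathbf t}$ preserves essential images because it is an equivalence.

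\textbf{Main obstacle.} The hard part is Step 1's bookkeeping of conventions: the BF incidence-algebra convention ($e_{x,y}$ with $y\le x$, left modules as covariant representations), the shift by $-\mathbf 1$, and the sign in $\Dgr$. Together these must produce exactly the reflection $\tau_{\mathbf t}$ twice, and not, say, a shift by $\mathbf 1$. I would verify this on a representable projective $M=S(-\mathbf a)$ first. Here only $i=0$ survives, via $H^n_{\mathfrak m}(S(-\mathbf a-\mathbf 1))=\Dgr S(\mathbf a)$ up to the canonical twist. I would check that its restriction to $P_{\mathbf t}$ is $D$ of the right projective at $\mathbf a$, read as a left representation, i.e.\ the injective $D(e_{\mathbf a,\mathbf a}\Lambda_{\mathbf t})$. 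I would then conclude by naturality and the long exact sequences, since both sides are $\delta$-functors in $M$ vanishing appropriately on projectives.
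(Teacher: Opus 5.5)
Your outline is correct, and your Steps~2--3 are exactly what the paper does. The difference is in how you obtain the natural isomorphism of Step~1.

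The paper works at the level of complexes. For a projective resolution $P_\bullet=\pi_{\mathbf t}(F_\bullet)$ of $\pi_{\mathbf t}M$, it quotes \cite[Proposition~3.9]{BF} for a natural quasi-isomorphism $(\tau_{\mathbf t}^*\circ D\circ\tau_{\mathbf t}^*)\Hom_{\Lambda_{\mathbf t}}(P_\bullet,\Lambda_{\mathbf t})\simeq(\pi_{\mathbf t}\mathcal N^{\mathbf 1}_{\mathbf t}M)[n]$. It then uses \cite[Section~2.3]{BF} for $\rho_{\mathbf t}H^i_{\mathfrak m}(M(-\mathbf 1))\cong\pi_{\mathbf t}H^i\mathcal N^{\mathbf 1}_{\mathbf t}(M)$ and takes $H^{i-n}$. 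This handles all $i$ and naturality at once, and it sets up the comparison with $\mathcal N^{\mathbf 1}_{\mathbf t}$ that is reused later. The paper cites \cite[Proposition~2.8, Remark~2.9]{BF} only for $\Phi_i^{\mathbf t}(M)\in\mathcal A^-_{\mathbf t}$.

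Your local-duality route avoids the Nakayama functor and is more self-contained, but your twist is off. Since $\omega_S=S(-\mathbf 1)$, one has $H^i_{\mathfrak m}(M(-\mathbf 1))\cong\Dgr\Ext^{n-i}_S(M,S)$; your $\Dgr\Ext^{n-i}_S(M,S(-\mathbf 1))$ is $H^i_{\mathfrak m}(M)$. With the correct twist, the bookkeeping you were worried about disappears. Compute with a free resolution inside $\mathcal A^+_{\mathbf t}$ and use $\pi_{\mathbf t}S(-\mathbf b)\cong\Lambda_{\mathbf t}e_{\mathbf b,\mathbf b}$ for $\mathbf b\in P_{\mathbf t}$. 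This gives $\Ext^j_S(M,S)_{-\mathbf b}\cong\Ext^j_{\Lambda_{\mathbf t}}(\pi_{\mathbf t}M,\Lambda_{\mathbf t})e_{\mathbf b,\mathbf b}$. Dualizing then yields $\rho_{\mathbf t}\circ\Phi_i^{\mathbf t}\cong D\Ext^{n-i}_{\Lambda_{\mathbf t}}(\pi_{\mathbf t}(-),\Lambda_{\mathbf t})$ directly. No $\tau_{\mathbf t}$ appears and there is no leftover shift: the $-\mathbf 1$ in $\Phi_i^{\mathbf t}$ exactly absorbs the canonical module.

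Your $\delta$-functor fallback is also sound. Both sides are homological $\delta$-functors in $n-i$ that vanish on projectives in positive degrees and have right exact degree-zero terms. Two corrections are needed, though. The base case is $i=n$ (Ext-degree $0$), not $i=0$. And you must check naturality along the monomial maps $S(-\mathbf a)\to S(-\mathbf b)$, not just objectwise on each $S(-\mathbf a)$.
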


\begin{proof}
Let $M\in\mathcal A^+_{\mathbf t}$ and put $X=\pi_{\mathbf t}(M).$
Choose a projective resolution $F_\bullet\to M$ in
$\mathcal A^+_{\mathbf t}$ and put $P_\bullet=\pi_{\mathbf t}(F_\bullet).$
Then $P_\bullet\to X$ is a projective resolution over
$\Lambda_{\mathbf t}$.

Brun--Fl\o ystad \cite[Proposition~3.9]{BF}, after viewing the
positively $\mathbf t$-determined complexes through the finite-box
equivalence $\pi_{\mathbf t}$, give a natural quasi-isomorphism
\[
(\tau_{\mathbf t}^*\circ D\circ\tau_{\mathbf t}^*)\!\left(
\Hom_{\Lambda_{\mathbf t}}(P_\bullet,\Lambda_{\mathbf t})
\right)
\simeq
\bigl((\pi_{\mathbf t}\circ\mathcal N_{\mathbf t}^{\mathbf1})(M)\bigr)[n].
\]
Since $\tau_{\mathbf t}^*\circ D\circ\tau_{\mathbf t}^*\cong D$, this becomes
\[
D\Hom_{\Lambda_{\mathbf t}}
(P_\bullet,\Lambda_{\mathbf t})
\simeq
\bigl((\pi_{\mathbf t}\circ\mathcal N_{\mathbf t}^{\mathbf1})(M)\bigr)[n].
\]
Here $\mathcal N_{\mathbf t}^{\mathbf1}$ denotes the first full
Nakayama functor of Brun--Fl\o ystad.  By
\cite[Section~2.3]{BF},
\[
\rho_{\mathbf t}\!\left(
H^i_{\mathfrak m}(M(-\mathbf1))
\right)
\cong
\pi_{\mathbf t}\!\left(
H^i\mathcal N_{\mathbf t}^{\mathbf1}(M)
\right).
\]
Since $\pi_{\mathbf t}$ is exact,
\[
\pi_{\mathbf t}\!\left(
H^i\mathcal N_{\mathbf t}^{\mathbf1}(M)
\right)
\cong
H^i\!\left(
(\pi_{\mathbf t}\circ\mathcal N_{\mathbf t}^{\mathbf1})(M)
\right).
\]
Therefore
\[
\begin{aligned}
\rho_{\mathbf t}\!\left(
H^i_{\mathfrak m}(M(-\mathbf1))
\right)
&\cong
H^{i-n}\!\left(
\bigl((\pi_{\mathbf t}\circ\mathcal N_{\mathbf t}^{\mathbf1})(M)\bigr)[n]
\right)\\
&\cong
H^{i-n}\!\left(
D\Hom_{\Lambda_{\mathbf t}}
(P_\bullet,\Lambda_{\mathbf t})
\right)\\
&\cong
D\Ext_{\Lambda_{\mathbf t}}^{\,n-i}
(X,\Lambda_{\mathbf t}).
\end{aligned}
\]
\end{proof}

\subsection{Auslander regular algebras}

We recall only the properties needed below.  Let $A$ be a finite-dimensional $\Bbbk$-algebra, and consider a minimal injective coresolution of the right regular module
\[
0\longrightarrow A_A\longrightarrow I^0\longrightarrow I^1\longrightarrow I^2\longrightarrow\cdots.
\]

\begin{definition}
For $k\ge1$, the algebra $A$ is \emph{$k$-Gorenstein} if
\[
\operatorname{pd}_A I^r\le r
\qquad (0\le r<k).
\]
It satisfies the \emph{Auslander condition} if it is $k$-Gorenstein for every $k$.  It is \emph{Auslander regular} if it satisfies the Auslander condition and has finite global dimension.
\end{definition}

Our convention for $k$-Gorenstein algebras and the Auslander condition agrees with that used by Iyama--Marczinzik \cite{IM}.  For a right $A$-module $X$, define
$$
\grade_{A^{\mathrm{op}}} X
=
\inf\{r\ge0\mid \Ext^r_{A^{\mathrm{op}}}(X,A)\ne0\},
$$
with the convention $\inf\varnothing=\infty$; in particular,
$\grade_{A^{\mathrm{op}}}0=\infty$.
By Auslander's theorem \cite[Theorem~3.7]{FGR}, the Auslander condition is equivalent to the following condition: for every left $A$-module $M$, every $q\ge1$, and every right submodule
$$
X\subseteq\Ext_A^q(M,A),
$$
one has $\grade_{A^{\mathrm{op}}} X\ge q.$

\subsection{A realization lemma for Ext}

The following elementary consequence of the Auslander condition is the homological mechanism behind the Serre filtration.

\begin{lemma}\label{lem:realization}
Let $A$ be a finite-dimensional algebra satisfying the Auslander condition and let $q\ge1$.  Then
\[
\EssIm\left(
\Ext_A^q(-,A):A\text{-}\modcat\longrightarrow\modcat\text{-}A
\right)
=
\{X\in\modcat\text{-}A\mid \grade_{A^{\mathrm{op}}} X\ge q\}.
\]
\end{lemma}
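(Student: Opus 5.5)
The statement has two inclusions, and I will prove them separately.

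For the inclusion ``$\subseteq$'', apply the Auslander criterion recalled above (Auslander's theorem \cite[Theorem~3.7]{FGR}) with $X=\Ext^q_A(M,A)$ itself, which is a submodule of itself. This gives $\grade_{A^{\mathrm{op}}}\Ext^q_A(M,A)\ge q$ for every left module $M$. The zero module is also covered, since its grade is $\infty$.

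For the reverse inclusion, let $X$ be a right module with $\grade_{A^{\mathrm{op}}}X\ge q$. I would realize $X$ as an Ext module by a transpose-type construction, applying $\Hom_A(-,A)$ to a projective resolution of $X$. Take a projective resolution of $X$ over $A^{\mathrm{op}}$,
\[
\cdots\to Q_{q+1}\xrightarrow{d_{q+1}} Q_q\xrightarrow{d_q} Q_{q-1}\to\cdots\to Q_0\to X\to 0,
\]
by finitely generated projective right modules, and dualize with $(-)^*=\Hom_{A^{\mathrm{op}}}(-,A)$. This gives a complex of finitely generated projective left modules
\[
0\to Q_0^*\to Q_1^*\to\cdots\to Q_q^*\xrightarrow{d_{q+1}^*}Q_{q+1}^*.
\]
Its cohomology at $Q_r^*$ is $\Ext^r_{A^{\mathrm{op}}}(X,A)$, which vanishes for $r<q$ because $\grade X\ge q$. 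So the truncated complex $0\to Q_0^*\to\cdots\to Q_{q}^*$ is exact except at the last spot.

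Now set $M=\operatorname{Coker}(d_{q+1}^*:Q_q^*\to Q_{q+1}^*)$. I then read off $\Ext^q_A(M,A)$ from a projective resolution of $M$. Exactness in degrees $<q$ means that
\[
Q_0^*\to\cdots\to Q_q^*\to Q_{q+1}^*\to M\to0
\]
is exact at $Q_1^*,\dots,Q_q^*$ and at $Q_{q+1}^*$. It is therefore the beginning of a projective resolution of $M$, with $Q_{q+1}^*$ in degree $0$ and $Q_{q+1-r}^*$ in degree $r$ for $r\le q+1$. The map $Q_0^*\to Q_1^*$ need not be injective, but $\Ext^q_A(M,A)$ only involves the terms in degrees $q-1$, $q$ and $q+1$. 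Degree $q+1$ is occupied by the kernel of $Q_0^*\to Q_1^*$, which maps onto the relevant boundary.

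More carefully, a projective resolution of $M$ can be chosen whose degree-$r$ terms agree with $Q_{q+1-r}^*$ for $r\le q$, and whose degree-$(q+1)$ term $P'$ surjects onto $\ker(Q_1^*\to Q_2^*)=\operatorname{Im}(Q_0^*\to Q_1^*)$. Dualizing again and using $Q^{**}\cong Q$ for finitely generated projectives, $\Ext^q_A(M,A)$ becomes the cohomology of $Q_1\xrightarrow{d_1}Q_0\to P'^*$ at $Q_0$. The map $Q_0\to P'^*$ factors as $Q_0=Q_0^{**}\to(\operatorname{Im})^*\hookrightarrow P'^*$, so its kernel equals the kernel of $Q_0\to(\operatorname{Im}(Q_0^*\to Q_1^*))^*$. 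It remains to show that this kernel is $\operatorname{Im} d_1$, so that the cohomology is $Q_0/\operatorname{Im}d_1\cong X$. This identification is the step I expect to be the main obstacle.

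To handle it, I would compare with the case $q=1$, which is the classical statement $\Ext^1_A(\operatorname{Tr}X,A)\cong\ker(X\to X^{**})$. Here $\grade X\ge1$ means $X^*=0$, so the whole of $X$ is this kernel. For general $q$, I would reduce to the $q=1$ computation by dimension shifting through the exact part of the dual complex. If the direct diagram chase becomes messy, the cleaner fallback is to invoke the standard isomorphism $\Ext^q_A(\operatorname{Tr}\Omega^{q-1}X,A)\cong X$, valid when $\Ext^r(X,A)=0$ for $0\le r<q$ (Auslander--Bridger). Then $M=\operatorname{Tr}\Omega^{q-1}X$ works, and it is exactly the cokernel constructed above. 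In either form, this step needs only $\grade X\ge q$, not the Auslander condition; the Auslander condition is used only in the first inclusion.
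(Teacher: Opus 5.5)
Your first inclusion is the paper's argument. You are also right that the converse uses only $\grade_{A^{\mathrm{op}}}X\ge q$ and not the Auslander condition. The converse as written, however, has an off-by-one error that breaks it.

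\textbf{The error.} Your module $M=\operatorname{Coker}(Q_q^*\to Q_{q+1}^*)$ is $\operatorname{Tr}\Omega^{q}X$. The sequence $Q_0^*\to\cdots\to Q_q^*\to Q_{q+1}^*\to M\to0$ is \emph{not} exact at $Q_q^*$. Its cohomology there is $\Ext^q_{A^{\mathrm{op}}}(X,A)$. The grade hypothesis does not kill this group, and it is nonzero precisely when $\grade_{A^{\mathrm{op}}}X=q$. For $q=1$ this is your asserted identity $\ker(Q_1^*\to Q_2^*)=\operatorname{Im}(Q_0^*\to Q_1^*)$, which therefore fails.

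So the complex is not the start of a projective resolution of $M$, and the conclusion is false for this $M$. For $q=1$ one has $\Ext^1_A(\operatorname{Tr}\Omega X,A)\cong\ker\bigl(\Omega X\to(\Omega X)^{**}\bigr)=0$, because $\Omega X\subseteq Q_0$ is torsionless. Hence $\Ext^1_A(M,A)=0\ne X$ whenever $X\ne0$.

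Two further slips follow from this. Your later computation via $Q_1\xrightarrow{d_1}Q_0\to P'^*$ silently switches to a different indexing, with $Q_0^*$ in degree $q$; in your own setup $Q_1^*$ sits in degree $q$. Also, your fallback module $\operatorname{Tr}\Omega^{q-1}X=\operatorname{Coker}(Q_{q-1}^*\to Q_q^*)$ is not ``exactly the cokernel constructed above.''

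\textbf{The fix.} Stop one step earlier, which is what the paper does. The map $Q_0^*\to Q_1^*$ \emph{is} injective, contrary to what you wrote. Its kernel is $\Hom_{A^{\mathrm{op}}}(X,A)=\Ext^0_{A^{\mathrm{op}}}(X,A)$, which vanishes since $q\ge1$; you had already noted this when you said the cohomology vanishes for $r<q$.

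Put $M=\operatorname{Coker}(Q_{q-1}^*\to Q_q^*)$. Then $0\to Q_0^*\to Q_1^*\to\cdots\to Q_q^*\to M\to0$ is exact, so it is a projective resolution of $M$ of length $q$ with $Q_0^*$ in degree $q$. Apply $\Hom_A(-,A)$ and use $Q_r^{**}\cong Q_r$. The top cohomology is $\operatorname{Coker}(d_1\colon Q_1\to Q_0)\cong X$, so $\Ext^q_A(M,A)\cong X$ directly. There is no $P'$, no diagram chase, and no reduction to the $q=1$ transpose formula.

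This is exactly the Auslander--Bridger isomorphism you cite as a fallback. That fallback is correct, but it should replace your construction rather than be identified with it.
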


\begin{proof}
If $X=\Ext_A^q(M,A)$, the Auslander condition gives $\grade_{A^{\mathrm{op}}} X\ge q$.

Conversely, let $X$ be a right $A$-module with $\grade_{A^{\mathrm{op}}} X\ge q$ and choose a projective resolution
\[
\cdots\longrightarrow P_q\longrightarrow P_{q-1}\longrightarrow\cdots\longrightarrow P_0\longrightarrow X\longrightarrow0.
\]
Write $(-)^*=\Hom_{A^{\mathrm{op}}}(-,A)$.  Since
\[
\Ext^r_{A^{\mathrm{op}}}(X,A)=0
\qquad (0\le r<q),
\]
dualizing the first $q$ steps yields an exact sequence
\[
0\longrightarrow P_0^*\longrightarrow P_1^*\longrightarrow\cdots\longrightarrow P_q^*\longrightarrow M\longrightarrow0,
\]
where
\[
M=\operatorname{coker}(P_{q-1}^*\longrightarrow P_q^*).
\]
This is a projective resolution of the left $A$-module $M$ of length $q$.
Here $P_q^*$ occurs in degree $0$ and $P_0^*$ in degree $q$.  Applying
$\Hom_A(-,A)$ and using $P_r^{**}\cong P_r$, the last cohomology group is
therefore the cokernel of $P_1\to P_0$.  Hence
\[
\Ext_A^q(M,A)
\cong
\operatorname{coker}(P_1\longrightarrow P_0)
\cong X.
\]
\end{proof}

For $q\ge1$ put
\[
\mathcal G_q(A)
=
\{X\in\modcat\text{-}A\mid \grade_{A^{\mathrm{op}}} X\ge q\}.
\]

\begin{proposition}\label{prop:grade-serre}
If $A$ satisfies the Auslander condition, then $\mathcal G_q(A)$ is a Serre subcategory of $\modcat\text{-}A$ for every $q\ge1$.
\end{proposition}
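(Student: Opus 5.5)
We must show that $\mathcal G_q(A)$ is closed under submodules, quotients and extensions in $\modcat\text{-}A$. Extensions and two of the three parts are handled by the long exact Ext sequence; the Auslander condition is only needed for submodules (or quotients).

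\emph{Extensions.} Let $0\to X'\to X\to X''\to0$ be exact with $X',X''\in\mathcal G_q(A)$. For each $r<q$, the long exact sequence for $\Ext^\bullet_{A^{\mathrm{op}}}(-,A)$ contains the segment
\[
\Ext^r(X'',A)\to\Ext^r(X,A)\to\Ext^r(X',A).
\]
Both outer terms vanish, so the middle one does too, and $\grade X\ge q$.

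\emph{Submodules.} Let $X\in\mathcal G_q(A)$ and $Y\subseteq X$.
\begin{itemize}
\item By Lemma~\ref{lem:realization}, $X\cong\Ext^q_A(M,A)$ for some left module $M$.
\item Then $Y$ is a right submodule of $\Ext^q_A(M,A)$.
\item Auslander's theorem \cite[Theorem~3.7]{FGR}, in the form recalled above, gives $\grade_{A^{\mathrm{op}}}Y\ge q$ directly.
\end{itemize}

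\emph{Quotients.} Let $0\to Y\to X\to Z\to0$ be exact with $X\in\mathcal G_q(A)$. By the submodule step, $\grade Y\ge q$. For $r<q$, the long exact sequence gives
\[
\Ext^{r-1}(Y,A)\to\Ext^r(Z,A)\to\Ext^r(X,A).
\]
If $r\ge1$, then $r-1<q$, so $\Ext^{r-1}(Y,A)=0$; also $\Ext^r(X,A)=0$. Hence $\Ext^r(Z,A)=0$.
If $r=0$, then $\Hom(Z,A)\hookrightarrow\Hom(X,A)=0$, so again the term vanishes.
Thus $\grade Z\ge q$. The zero module has infinite grade, so the class is nonempty.

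The only real obstacle is closure under submodules. Grade is not monotone on submodules for general algebras, which is why we route through Lemma~\ref{lem:realization} to present $X$ as an $\Ext^q$ module and then invoke Auslander's characterization. Both closure properties are needed together: quotient closure uses submodule closure, via the vanishing of $\Ext^{r-1}(Y,A)$.
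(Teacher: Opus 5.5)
Your proof is correct and matches the paper's argument step for step. Extension closure comes from the long exact $\Ext$ sequence. Submodule closure comes from realizing $X\cong\Ext^q_A(M,A)$ via Lemma~\ref{lem:realization} and then applying Auslander's characterization. Quotient closure comes from the long exact sequence, using that the kernel (already known to be a submodule of $X$) has grade at least $q$.
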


\begin{proof}
Extension closure follows from the long exact Ext sequence.  If $U\subseteq X$ and $X\in\mathcal G_q(A)$, Lemma~\ref{lem:realization} writes $X\cong\Ext_A^q(M,A)$, and the Auslander condition gives $\grade_{A^{\mathrm{op}}} U\ge q$.  Thus $\mathcal G_q(A)$ is closed under submodules.  Finally, if
\[
0\longrightarrow U\longrightarrow X\longrightarrow Q\longrightarrow0
\]
has $U,X\in\mathcal G_q(A)$, the long exact Ext sequence gives
\[
\Ext^r_{A^{\mathrm{op}}}(Q,A)=0
\qquad (0\le r<q),
\]
so $Q\in\mathcal G_q(A)$.
\end{proof}

\section{Local cohomology categories are abelian except in top degree}\label{sec:finitebox}

We now specialize the preceding homological results to the incidence algebra
$\Lambda_{\mathbf t}$ of the finite box $P_{\mathbf t}$.  For $i<n$, the
local-cohomology image categories are Serre, hence abelian.  In top degree the
behavior changes: the image is the category of second cosyzygies, which is
extension-closed but, for every nontrivial box, non-abelian.  We first establish
these statements in a finite box and then pass to the global form.

The poset $P_{\mathbf t}$ is a finite distributive lattice.  Iyama and
Marczinzik prove that its incidence algebra is Auslander regular
\cite[Theorem~3.2]{IM}, and compute the grade of every simple module
\cite[Corollary~3.3]{IM}.  Their incidence-algebra convention is opposite to
the Brun--Fl\o ystad convention used here.  Accordingly, right modules over
$\Lambda_{\mathbf t}$ correspond to the left modules in the convention of
\cite{IM}, or equivalently to representations of the opposite poset.  Thus
their formula translates, for the right simple $R_{\mathbf a}$ over
$\Lambda_{\mathbf t}$, to
\[
\grade_{\Lambda_{\mathbf t}^{\mathrm{op}}}R_{\mathbf a}
=
\#\{\text{elements of $P_{\mathbf t}$ covered by $\mathbf a$}\}
=
\ell(\mathbf a).
\]
Indeed, $\mathbf a$ covers one element in coordinate $j$ exactly when
$a_j>0$.  This is the convention needed in the proof below.

\subsection{The layers below the top}

We now prove the finite-box classification announced in the Introduction.

\begin{theorem}[Intrinsic finite-box description]\label{thm:finite-intrinsic}
Let $0\le i<n$ and put $q=n-i$.  Then
\[
\boxed{
\mathcal T_i(\mathbf t)
=
\left\{
V\in\Rep(P_{\mathbf t})\mid
V_{\mathbf a}=0\text{ whenever }\ell(\mathbf a)<q
\right\}
=
\Rep(U_q(\mathbf t)).
}
\]
Consequently $\mathcal T_i(\mathbf t)$ is a Serre subcategory of $\Rep(P_{\mathbf t})$, and
$\rho_{\mathbf t}$ restricts to an exact equivalence
\[
\rho_{\mathbf t}|_{\mathcal H_i(\mathbf t)}:
\mathcal H_i(\mathbf t)\xrightarrow{\sim}\Rep(U_q(\mathbf t)).
\]
\end{theorem}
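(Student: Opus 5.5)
We would combine Proposition~\ref{prop:naka-ext} with Lemma~\ref{lem:realization} and then compute the grade Serre subcategory in terms of composition factors. Since $0\le i<n$, we have $q=n-i\ge1$, so Lemma~\ref{lem:realization} applies to $A=\Lambda_{\mathbf t}$. This algebra is Auslander regular by \cite[Theorem~3.2]{IM}. Proposition~\ref{prop:naka-ext} gives
\[
\mathcal T_i(\mathbf t)=\EssIm\bigl(D\Ext^q_{\Lambda_{\mathbf t}}(-,\Lambda_{\mathbf t})\bigr)=D\bigl(\mathcal G_q(\Lambda_{\mathbf t})\bigr),
\]
where we use that $D$ is an equivalence $(\modcat\text{-}\Lambda_{\mathbf t})^{\mathrm{op}}\to\Rep(P_{\mathbf t})$, so it carries essential images to essential images. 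Since $D$ preserves vertex support, it suffices to prove that a right module $X$, viewed as a representation of $P_{\mathbf t}^{\mathrm{op}}$, satisfies $\grade X\ge q$ if and only if $X_{\mathbf a}=0$ whenever $\ell(\mathbf a)<q$.

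The core step is this support characterization of $\mathcal G_q(\Lambda_{\mathbf t})$. By Proposition~\ref{prop:grade-serre}, $\mathcal G_q$ is a Serre subcategory. A finite-dimensional $X$ has a finite composition series whose factors are the simples $R_{\mathbf a}$ with $X_{\mathbf a}\ne0$, each appearing $\dim X_{\mathbf a}$ times.

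\begin{itemize}
\item If $X\in\mathcal G_q$, closure under subquotients puts every composition factor $R_{\mathbf a}$ in $\mathcal G_q$. By the translated Iyama--Marczinzik formula this means $\ell(\mathbf a)=\grade R_{\mathbf a}\ge q$, so $X$ vanishes off $U_q(\mathbf t)$.
\item Conversely, if $X$ vanishes off $U_q(\mathbf t)$, every composition factor has grade $\ge q$. Extension closure, by induction on the length of $X$, then gives $X\in\mathcal G_q$.
\end{itemize}

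The zero module is harmless here, since $\grade 0=\infty$.

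Applying $D$ yields the boxed equality. The identification with $\Rep(U_q(\mathbf t))$ uses the convention at the end of the Introduction: $U_q(\mathbf t)$ is an upper order ideal, because $\mathbf a\le\mathbf b$ implies $\operatorname{supp}\mathbf a\subseteq\operatorname{supp}\mathbf b$. It is therefore convex, and extension by zero identifies $\Rep(U_q(\mathbf t))$ with the representations supported on it.

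The Serre property then follows in either of two ways:
\begin{itemize}
\item directly, because support conditions are preserved by subobjects, quotients and extensions, all of which are computed vertexwise;
\item by transporting Proposition~\ref{prop:grade-serre} through the exact duality $D$.
\end{itemize}
The final statement is the restriction of the exact equivalence $\rho_{\mathbf t}|_{\mathcal H_i(\mathbf t)}:\mathcal H_i(\mathbf t)\to\mathcal T_i(\mathbf t)$ already recorded.

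The main obstacle is bookkeeping rather than substance. We must confirm that the grade formula $\grade R_{\mathbf a}=\ell(\mathbf a)$ is stated for right modules in the Brun--Fl\o ystad convention, so that the opposite conventions do not swap $\mathbf a$ with $\mathbf t-\mathbf a$. This is exactly where the cancellation $\tau_{\mathbf t}^*\circ D\circ\tau_{\mathbf t}^*\cong D$ and the translated Iyama--Marczinzik formula recorded before the theorem are used. We rely on them as stated.
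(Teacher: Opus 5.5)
Your proposal is correct and follows essentially the same route as the paper. Both arguments obtain $\mathcal T_i(\mathbf t)=D\bigl(\mathcal G_q(\Lambda_{\mathbf t})\bigr)$ from the Nakayama--Ext translation and the realization lemma, detect membership in the Serre subcategory $\mathcal G_q(\Lambda_{\mathbf t})$ through composition factors using $\grade_{\Lambda_{\mathbf t}^{\mathrm{op}}}R_{\mathbf a}=\ell(\mathbf a)$ and the fact that $D$ preserves vertex support, and finish by extension by zero along the upper order ideal $U_q(\mathbf t)$.
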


\begin{proof}
Since $q\ge1$ and $\Lambda_{\mathbf t}$ is Auslander regular, the algebra $\Lambda_{\mathbf t}$ satisfies the Auslander condition.  Therefore Proposition~\ref{prop:naka-ext}, Lemma~\ref{lem:realization}, and Proposition~\ref{prop:grade-serre} give
\[
\mathcal T_i(\mathbf t)
=
D\bigl(\mathcal G_q(\Lambda_{\mathbf t})\bigr).
\]
Since $D$ is an exact duality and $\mathcal G_q(\Lambda_{\mathbf t})$ is a Serre subcategory of $\modcat\text{-}\Lambda_{\mathbf t}$, it follows already that $\mathcal T_i(\mathbf t)$ is a Serre subcategory of $\Rep(P_{\mathbf t})$.

It remains to identify this Serre subcategory explicitly.  Since $\modcat\text{-}\Lambda_{\mathbf t}$ has finite length, $\mathcal G_q(\Lambda_{\mathbf t})$ is determined by the simple modules it contains.  The right simple $R_{\mathbf a}$ belongs to $\mathcal G_q(\Lambda_{\mathbf t})$ precisely when
\[
\grade_{\Lambda_{\mathbf t}^{\mathrm{op}}}R_{\mathbf a}=\ell(\mathbf a)\ge q.
\]
The duality $D$ preserves vertex support.  Hence the corresponding left representations have no composition factors at vertices satisfying $\ell(\mathbf a)<q$.  Since the dimension vector is additive in short exact sequences and the vertex simple at $\mathbf a$ has dimension vector concentrated at $\mathbf a$, this condition is equivalent to
\[
V_{\mathbf a}=0\qquad\text{for }\ell(\mathbf a)<q.
\]
Since $\ell(\mathbf a)\le\ell(\mathbf b)$ whenever
$\mathbf a\le\mathbf b$, the subposet $U_q(\mathbf t)$ is an upper order
ideal of $P_{\mathbf t}$.  Hence extension by zero identifies
$\Rep(U_q(\mathbf t))$ with the full subcategory of
$\Rep(P_{\mathbf t})$ consisting of representations $V$ such that
$\operatorname{supp}V\subseteq U_q(\mathbf t)$.  This gives the asserted
identification.
\end{proof}

\begin{corollary}\label{cor:finite-filtration}
We have
\[
\mathcal T_0(\mathbf t)
\subseteq\mathcal T_1(\mathbf t)
\subseteq\cdots\subseteq\mathcal T_{n-1}(\mathbf t).
\]
For $1\le i<n$, the new simple objects appearing from
$\mathcal T_{i-1}(\mathbf t)$ to $\mathcal T_i(\mathbf t)$ are the vertex
simples $S_{\mathbf a}$ with
\[
\ell(\mathbf a)=n-i.
\]
If $\mathbf t\ge\mathbf1$, all inclusions are strict.
\end{corollary}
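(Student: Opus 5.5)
The plan is to deduce everything from Theorem~\ref{thm:finite-intrinsic}, which identifies $\mathcal T_i(\mathbf t)$ with $\Rep(U_{n-i}(\mathbf t))$. That is, $\mathcal T_i(\mathbf t)$ consists of the representations vanishing at every vertex $\mathbf a$ with $\ell(\mathbf a)<n-i$. The three claims then become statements about the nested supports $U_q(\mathbf t)$.

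\emph{Inclusions.} As $i$ increases, $q=n-i$ decreases, so $U_{n-i+1}(\mathbf t)\subseteq U_{n-i}(\mathbf t)$. A representation vanishing whenever $\ell(\mathbf a)<n-i+1$ in particular vanishes whenever $\ell(\mathbf a)<n-i$. Hence $\mathcal T_{i-1}(\mathbf t)\subseteq\mathcal T_i(\mathbf t)$.

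\emph{New simples.} Both categories are Serre subcategories of the finite-length category $\Rep(P_{\mathbf t})$. Their simple objects are therefore exactly the vertex simples $S_{\mathbf a}$ that they contain. By the theorem, $S_{\mathbf a}$ lies in $\mathcal T_i(\mathbf t)$ if and only if $\ell(\mathbf a)\ge n-i$. So the simples lying in $\mathcal T_i(\mathbf t)$ but not in $\mathcal T_{i-1}(\mathbf t)$ are those with $n-i\le\ell(\mathbf a)<n-i+1$, that is, with $\ell(\mathbf a)=n-i$.

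\emph{Strictness.} For $\mathbf t\ge\mathbf 1$ and $1\le i<n$, I would exhibit a vertex with support rank exactly $n-i$, namely $\mathbf a=\sum_{j=1}^{n-i}\varepsilon_j$. It lies in $P_{\mathbf t}$ because $t_j\ge1$ for all $j$. The simple $S_{\mathbf a}$ then lies in $\mathcal T_i(\mathbf t)\setminus\mathcal T_{i-1}(\mathbf t)$, so each inclusion is strict.

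There is no real obstacle, since Theorem~\ref{thm:finite-intrinsic} carries all the content. The only point needing care is that a Serre subcategory of a finite-length category is determined by its simples. This holds here because membership in $\Rep(U_q(\mathbf t))$ is a condition on the dimension vector, and such conditions are detected by composition factors.
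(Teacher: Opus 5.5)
Your proposal is correct and follows the paper's route: the paper gives no separate argument, and the corollary is read off directly from Theorem~\ref{thm:finite-intrinsic} via the nested supports $U_{n-i+1}(\mathbf t)\subseteq U_{n-i}(\mathbf t)$. As in your argument, the simples are sorted by the condition $\ell(\mathbf a)\ge n-i$, and strictness is witnessed by a vertex of support rank exactly $n-i$ in the box.
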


The following proposition gives a syzygy explanation for the filtration in Corollary~\ref{cor:finite-filtration}. We first record the following elementary observation. Let $N$ be a finitely generated $\mathbb N^n$-graded $S$-module. If $N$ is positively $\mathbf t$-determined, then all its minimal homogeneous generators have multidegrees $\le \mathbf t$. Conversely, if $N$ is torsion-free and generated in multidegrees $\le \mathbf t$, then $N$ is positively $\mathbf t$-determined.

\begin{proposition}\label{prop:syzygy-filtration}
Let $M\in\mathcal A^+_{\mathbf t}$, let $P\twoheadrightarrow M$ be a minimal graded free cover, and put
$$
\Omega M=\ker(P\to M).
$$
Then $\Omega M\in\mathcal A^+_{\mathbf t}$. Moreover, for
$0\le i\le n-2$,
$$
H^i_{\mathfrak m}(M(-\mathbf1))
\cong
H^{i+1}_{\mathfrak m}(\Omega M(-\mathbf1)).
$$
\end{proposition}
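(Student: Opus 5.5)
Two things need proving: that $\Omega M$ lies in $\mathcal A^+_{\mathbf t}$, and the isomorphism of local cohomology modules. For the first, I use the elementary observation recorded just before the statement. Since $M\in\mathcal A^+_{\mathbf t}$, its minimal generators have multidegrees $\le\mathbf t$. Hence the minimal free cover has the form $P=\bigoplus S(-\mathbf b_k)$ with every $\mathbf b_k\le\mathbf t$. In particular $P$ is positively $\mathbf t$-determined: multiplication by $x_j$ on $S(-\mathbf b)$ is an isomorphism in degrees $\mathbf a$ with $a_j\ge b_j$, hence whenever $a_j\ge t_j$. The category $\mathcal A^+_{\mathbf t}$ is closed under kernels (it is abelian, as stated in Section~\ref{sec:determined}). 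Therefore $\Omega M=\ker(P\to M)$ lies in $\mathcal A^+_{\mathbf t}$. Alternatively, $\Omega M$ is a submodule of a free module, hence torsion-free, and one could invoke the converse half of the observation. However, that route needs generation degrees of $\Omega M$, so the kernel-closure argument is cleaner.

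For the isomorphism, I shift the short exact sequence $0\to\Omega M\to P\to M\to 0$ by $-\mathbf1$ and take the long exact sequence of $H^\bullet_{\mathfrak m}$:
\[
H^i_{\mathfrak m}(P(-\mathbf1))\to H^i_{\mathfrak m}(M(-\mathbf1))\to H^{i+1}_{\mathfrak m}(\Omega M(-\mathbf1))\to H^{i+1}_{\mathfrak m}(P(-\mathbf1)).
\]
Since $P(-\mathbf1)$ is a finite direct sum of shifted copies of $S$, and $S$ is Cohen--Macaulay of dimension $n$, Grothendieck's vanishing gives $H^r_{\mathfrak m}(P(-\mathbf1))=0$ for all $r<n$. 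For $0\le i\le n-2$ both outer terms have index $\le n-1<n$, so they vanish. The connecting homomorphism is therefore an isomorphism $H^i_{\mathfrak m}(M(-\mathbf1))\cong H^{i+1}_{\mathfrak m}(\Omega M(-\mathbf1))$. It is homogeneous of degree $\mathbf 0$, since everything is $\mathbb Z^n$-graded.

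I expect no serious obstacle. The only point needing care is the first step: checking that $P$ itself is positively $\mathbf t$-determined, which rests on the generator degrees being bounded by $\mathbf t$. This is exactly why minimality of the cover matters; a non-minimal cover could introduce generators of degree outside the box. The restriction $i\le n-2$ is precisely what keeps $H^{i+1}_{\mathfrak m}(P(-\mathbf1))$ below the top degree $n$. For $i=n-1$ the map into $H^n_{\mathfrak m}(P(-\mathbf1))$ need not vanish, which is consistent with the exceptional behaviour of $\mathcal H_n$. Combined with Theorem~\ref{thm:finite-intrinsic}, the isomorphism explains the inclusions $\mathcal H_i(\mathbf t)\subseteq\mathcal H_{i+1}(\mathbf t)$ of Corollary~\ref{cor:finite-filtration}.
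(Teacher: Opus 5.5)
Your proposal is correct and follows essentially the same route as the paper: the generator degrees of the minimal cover are bounded by $\mathbf t$, so $P$ is positively $\mathbf t$-determined and $\Omega M\in\mathcal A^+_{\mathbf t}$ by kernel closure, and the isomorphism comes from the long exact sequence together with $H^r_{\mathfrak m}(P(-\mathbf1))=0$ for $r<n$. The only minor difference is that you check directly that each $S(-\mathbf b)$ with $\mathbf b\le\mathbf t$ is positively $\mathbf t$-determined, whereas the paper cites the torsion-free half of the preceding observation.
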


\begin{proof}
Since $M$ is positively $\mathbf t$-determined, all its minimal homogeneous generators have multidegrees $\le \mathbf t$. Hence its minimal graded free cover has the form
$$
P=
\bigoplus_{\mathbf b\le \mathbf t}
S(-\mathbf b)^{\beta_{0,\mathbf b}(M)}.
$$

Thus $P$ is torsion-free and generated in multidegrees $\le \mathbf t$. By the elementary observation above, $P$ is positively $\mathbf t$-determined. Hence $P,M\in\mathcal A^+_{\mathbf t}.$

Since $\mathcal A^+_{\mathbf t}$ is an abelian full subcategory of $\mathcal A^+$, it is closed under kernels. Therefore
$$
\Omega M=\ker(P\to M)\in\mathcal A^+_{\mathbf t}.
$$

Now shift the short exact sequence
$$
0\longrightarrow\Omega M\longrightarrow P\longrightarrow M\longrightarrow0
$$
by $-\mathbf1$. Since $P$ is free, $H^r_{\mathfrak m}(P(-\mathbf1))=0$ for $0\le r<n$.
The associated long exact sequence in local cohomology therefore yields the desired isomorphism.
\end{proof}

\begin{remark}\label{rem:dimension-shift-stops}
Proposition~\ref{prop:syzygy-filtration} not only explains why $\mathcal H_i(\mathbf t)\subseteq\mathcal H_{i+1}(\mathbf t)$ for $0\le i\le n-2$, but also explains intrinsically why the Serre chain stops at
$\mathcal H_{n-1}(\mathbf t)$.  At $i=n-1$ the long exact sequence contains
\[
0\longrightarrow H^{n-1}_{\mathfrak m}(M(-\mathbf1))
\longrightarrow H^n_{\mathfrak m}(\Omega M(-\mathbf1))
\longrightarrow H^n_{\mathfrak m}(P(-\mathbf1)),
\]
and the last term is generally nonzero.  Thus dimension shifting no longer
gives an isomorphism with the top local cohomology category.
\end{remark}

\subsection{The top-degree category and second cosyzygies}

For $i=n$, Proposition~\ref{prop:naka-ext} gives
\[
\mathcal T_n(\mathbf t)
=
\EssIm\left(
D\Hom_{\Lambda_{\mathbf t}}(-,\Lambda_{\mathbf t})
:\Lambda_{\mathbf t}\text{-mod}\longrightarrow\Rep(P_{\mathbf t})
\right).
\]
The grade-realization lemma no longer applies because the relevant Ext degree
is zero.  On the right-module side the replacement is the category of second
syzygies; after applying $D$, the resulting objects in
$\Rep(P_{\mathbf t})$ are second cosyzygies.

For a finite-dimensional $\Bbbk$-algebra $A$, let
$\operatorname{Syz}_2(\modcat\text{-}A)$ denote the full subcategory of
$\modcat\text{-}A$ consisting of all modules $X$ for which there exists an
exact sequence
$$
0\longrightarrow X\longrightarrow P_1\longrightarrow P_0
\longrightarrow C\longrightarrow0
$$
with $P_0$ and $P_1$ projective.  Thus nonminimal projective resolutions are
allowed; in particular, projective modules belong to
$\operatorname{Syz}_2(\modcat\text{-}A)$.

\begin{lemma}\label{lem:dual-syzygy}
For every finite-dimensional $\Bbbk$-algebra $A$,
$$
\EssIm\!\left(
\Hom_A(-,A):A\text{-}\modcat\longrightarrow\modcat\text{-}A
\right)
=
\operatorname{Syz}_2(\modcat\text{-}A).
$$
\end{lemma}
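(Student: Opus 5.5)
We prove the two inclusions separately, using only left exactness of $\Hom$ and the duality $(-)^*$ between finitely generated projectives. Write $(-)^*$ for both $\Hom_A(-,A)$ and $\Hom_{A^{\mathrm{op}}}(-,A)$. These restrict to mutually inverse contravariant equivalences between finitely generated projective left modules and finitely generated projective right modules, with $P^{**}\cong P$ naturally.

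\emph{Inclusion $\subseteq$.} Let $M$ be a finitely generated left $A$-module and choose a finite free (or projective) presentation
\[
Q_1\xrightarrow{f}Q_0\longrightarrow M\longrightarrow0.
\]
Since $\Hom_A(-,A)$ is left exact and contravariant, it gives an exact sequence of right modules
\[
0\longrightarrow M^*\longrightarrow Q_0^*\xrightarrow{f^*}Q_1^*.
\]
Set $C=\operatorname{coker}(f^*)$. Then
\[
0\to M^*\to Q_0^*\to Q_1^*\to C\to0
\]
is exact, and $Q_0^*,Q_1^*$ are projective right modules. Hence $M^*\in\operatorname{Syz}_2(\modcat\text{-}A)$.

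\emph{Inclusion $\supseteq$.} Let $X\in\operatorname{Syz}_2(\modcat\text{-}A)$, with an exact sequence
\[
0\to X\to P_1\xrightarrow{g}P_0\to C\to0
\]
and $P_0,P_1$ projective right modules. Dualize $g$ to get a map of projective left modules $g^*:P_0^*\to P_1^*$, and put
\[
M=\operatorname{coker}(g^*),
\]
a finitely generated left module. Applying $\Hom_A(-,A)$ to the presentation $P_0^*\to P_1^*\to M\to0$ and using left exactness gives
\[
0\to M^*\to P_1^{**}\xrightarrow{g^{**}}P_0^{**}.
\]
Under the natural isomorphisms $P^{**}\cong P$, the map $g^{**}$ is identified with $g$. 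Therefore
\[
M^*\cong\ker g\cong X,
\]
so $X$ lies in the essential image.

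The only point needing care is the naturality of the evaluation isomorphism $P\to P^{**}$. It holds for $P=A$ and extends to finite direct sums and their summands by additivity, so $g^{**}$ corresponds to $g$ under this isomorphism. Everything else is formal, and no Gorenstein or Auslander hypothesis is needed, matching the statement that the lemma holds for every finite-dimensional $\Bbbk$-algebra $A$.
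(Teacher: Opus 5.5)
Your proof is correct and follows the paper's argument essentially verbatim: left exactness of $\Hom_A(-,A)$ applied to a projective presentation gives the inclusion $\subseteq$. For $\supseteq$, you dualize the middle map $g$ of a second-syzygy sequence, set $M=\operatorname{coker}(g^*)$, and use $P^{**}\cong P$; your explicit remark that this isomorphism is natural, so that $g^{**}$ corresponds to $g$, is a detail the paper leaves implicit.
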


\begin{proof}
Let $M\in A\text{-}\modcat$, put $M^*=\Hom_A(M,A)$, and choose a projective
presentation
$$
P_1\longrightarrow P_0\longrightarrow M\longrightarrow0.
$$
Applying $(-)^*=\Hom_A(-,A)$ gives an exact sequence
$$
0\longrightarrow M^*\longrightarrow P_0^*\longrightarrow P_1^*.
$$
Since $P_0^*$ and $P_1^*$ are projective right $A$-modules, this shows that
$M^*\in\operatorname{Syz}_2(\modcat\text{-}A)$.

Conversely, let $X\in\operatorname{Syz}_2(\modcat\text{-}A)$.  By definition,
there is an exact sequence
$$
0\longrightarrow X\longrightarrow Q_1\xrightarrow{d}Q_0
\longrightarrow C\longrightarrow0
$$
with $Q_0$ and $Q_1$ projective right $A$-modules.  Put
$$
M=\operatorname{coker}(d^*:Q_0^*\longrightarrow Q_1^*).
$$
Then $Q_0^*\to Q_1^*\to M\to0$ is a projective presentation of the left
$A$-module $M$.  Dualizing it and using the natural isomorphisms
$Q_i^{**}\cong Q_i$ gives
$$
M^*\cong\ker(d)=X.
$$
Thus $X$ belongs to the essential image of $\Hom_A(-,A)$.
\end{proof}

We shall use the following result of Auslander--Reiten \cite{ARsyzygy}.  If an Artin algebra $A$ is $k$-Gorenstein for every $k$, that is, if a minimal injective resolution
\[
0\longrightarrow A_A\longrightarrow I^0\longrightarrow I^1\longrightarrow\cdots
\]
satisfies $\operatorname{pd}_A I^j\le j$ for every $j\ge0$, then
the class of $r$-th syzygies (allowing nonminimal projective resolutions) is extension-closed for every $r\ge1$.
Since $\Lambda_{\mathbf t}$ is Auslander regular, it is $k$-Gorenstein for every $k$, so in particular
$\operatorname{Syz}_2(\modcat\text{-}\Lambda_{\mathbf t})$ is extension-closed.

For $\mathbf a\in P_{\mathbf t}$ let $I^{\mathbf t}_{\mathbf a}$ denote the principal lower-interval representation
\[
I^{\mathbf t}_{\mathbf a}(\mathbf b)=
\begin{cases}
\Bbbk,&\mathbf b\le\mathbf a,\\
0,&\text{otherwise},
\end{cases}
\]
with identity maps between nonzero components.  These are precisely the indecomposable injective objects of $\Rep(P_{\mathbf t})$.

Dually, we call an object $V$ of an abelian category a \emph{second cosyzygy}
if there exists an exact sequence
$$
0\longrightarrow W\longrightarrow I^0\longrightarrow I^1
\longrightarrow V\longrightarrow0
$$
with $I^0$ and $I^1$ injective.

\begin{theorem}[Explicit description of the top finite-box category]\label{thm:top-finite}
For $V\in\Rep(P_{\mathbf t})$, the following are equivalent:
\begin{enumerate}[label=\rm(\arabic*)]
\item $V\in\mathcal T_n(\mathbf t)$;
\item there is an exact sequence
\[
0\longrightarrow W\longrightarrow I^0\longrightarrow I^1\longrightarrow V\longrightarrow0
\]
with $I^0$ and $I^1$ injective in $\Rep(P_{\mathbf t})$;
\item $V$ is the cokernel of a morphism between finite direct sums of principal lower-interval representations,
\[
\bigoplus_{\mathbf a\in P_{\mathbf t}}
   (I^{\mathbf t}_{\mathbf a})^{m_{\mathbf a}}
\longrightarrow
\bigoplus_{\mathbf a\in P_{\mathbf t}}
   (I^{\mathbf t}_{\mathbf a})^{n_{\mathbf a}}.
\]
\end{enumerate}
Thus $\mathcal T_n(\mathbf t)$ is the category of second cosyzygies in $\Rep(P_{\mathbf t})$.  In particular, it contains every injective representation and is extension-closed in $\Rep(P_{\mathbf t})$.
\end{theorem}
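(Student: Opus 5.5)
The plan is to transport everything through the exact duality $D:(\modcat\text{-}\Lambda_{\mathbf t})^{\mathrm{op}}\xrightarrow{\sim}\Lambda_{\mathbf t}\text{-mod}\simeq\Rep(P_{\mathbf t})$ and reduce to Lemma~\ref{lem:dual-syzygy}. By Proposition~\ref{prop:naka-ext} with $i=n$,
\[
\mathcal T_n(\mathbf t)=D\bigl(\EssIm\Hom_{\Lambda_{\mathbf t}}(-,\Lambda_{\mathbf t})\bigr),
\]
and Lemma~\ref{lem:dual-syzygy} identifies the essential image inside $\modcat\text{-}\Lambda_{\mathbf t}$ with $\operatorname{Syz}_2(\modcat\text{-}\Lambda_{\mathbf t})$. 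So (1) holds exactly when $V\cong DX$ for some right module $X$ admitting an exact sequence $0\to X\to P_1\to P_0\to C\to0$ with $P_0,P_1$ projective.

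For (1)$\Leftrightarrow$(2), I apply the exact contravariant equivalence $D$ to this sequence. Since $D$ sends projective right modules to injective left modules and every injective arises this way, the dualized sequence $0\to DC\to DP_0\to DP_1\to DX\to0$ is exactly a sequence as in (2) with $W=DC$. Conversely, dualizing a sequence as in (2) produces a right module $DV$ with a sequence exhibiting it as a second syzygy. The only bookkeeping is to confirm that the identification $\modcat\text{-}\Lambda_{\mathbf t}\simeq\Rep(P_{\mathbf t}^{\mathrm{op}})$ followed by vertexwise $D$ matches the $D$ in Proposition~\ref{prop:naka-ext}, which is how the paper set it up.

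For (2)$\Leftrightarrow$(3), I use that injectives of $\Rep(P_{\mathbf t})$ are finite direct sums of the $I^{\mathbf t}_{\mathbf a}$. Given (2), $V=\operatorname{coker}(I^0\to I^1)$ has the required form. Conversely, given a cokernel of $f:I^0\to I^1$, I set $W=\ker f$, and the sequence in (2) is exact by construction. So (2) and (3) agree, with no condition on $W$ needed.

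For the final assertions, injectives lie in $\mathcal T_n(\mathbf t)$ via $0\to0\to0\to I\xrightarrow{=}I\to0$, or dually because projectives lie in $\operatorname{Syz}_2$. Extension closure follows from the quoted Auslander--Reiten result, since $\Lambda_{\mathbf t}$ is Auslander regular and hence $k$-Gorenstein for all $k$: $\operatorname{Syz}_2(\modcat\text{-}\Lambda_{\mathbf t})$ is extension-closed, and $D$, being an exact duality, carries short exact sequences to short exact sequences, so this transfers. I expect no real obstacle; the only delicate point is making sure the convention matching between left and right modules under $D$ is consistent.
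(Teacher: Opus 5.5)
Your proposal is correct and follows the paper's proof essentially step by step. Like the paper, you identify $\mathcal T_n(\mathbf t)$ with $D\bigl(\operatorname{Syz}_2(\modcat\text{-}\Lambda_{\mathbf t})\bigr)$ via Lemma~\ref{lem:dual-syzygy}, dualize the syzygy sequences to get (1)$\Leftrightarrow$(2), use the decomposition of injectives into the $I^{\mathbf t}_{\mathbf a}$ for (2)$\Leftrightarrow$(3), and transfer Auslander--Reiten extension closure through the exact duality $D$; your explicit sequence $0\to0\to0\to I\to I\to0$ additionally supplies the membership of injectives, which the paper leaves implicit.
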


\begin{proof}
By Lemma~\ref{lem:dual-syzygy},
\[
\mathcal T_n(\mathbf t)
=
D\bigl(\operatorname{Syz}_2(\modcat\text{-}\Lambda_{\mathbf t})\bigr).
\]
An object $X$ belongs to $\operatorname{Syz}_2(\modcat\text{-}\Lambda_{\mathbf t})$ precisely when it occurs in an exact sequence
\[
0\longrightarrow X\longrightarrow Q_1\longrightarrow Q_0
\longrightarrow C\longrightarrow0
\]
with $Q_0,Q_1$ projective.  Since $D$ is an exact contravariant duality sending projectives to injectives, applying $D$ gives an exact sequence
\[
0\longrightarrow D(C)\longrightarrow D(Q_0)\longrightarrow D(Q_1)
\longrightarrow D(X)\longrightarrow0.
\]
Thus $D(X)$ is a second cosyzygy.  Conversely, applying $D$ to a second
cosyzygy sequence for $V$ gives such a second-syzygy sequence, with
$X=D(V)$.  This proves the equivalence of (1) and (2).  Since every injective
in the finite-length category $\Rep(P_{\mathbf t})$ is a finite direct sum of
the $I^{\mathbf t}_{\mathbf a}$, (2) and (3) are equivalent.

Finally, $\operatorname{Syz}_2(\modcat\text{-}\Lambda_{\mathbf t})$ is extension-closed by the Auslander--Reiten result stated above, and exact duality preserves extension closure.  Hence $\mathcal T_n(\mathbf t)$ is extension-closed.
\end{proof}

The top layer is not part of the Serre filtration.

\begin{proposition}\label{prop:top-disjoint}
For every $0\le i<n$,
\[
\mathcal T_i(\mathbf t)\cap\mathcal T_n(\mathbf t)=0.
\]
\end{proposition}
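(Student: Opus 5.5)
The plan is to dualize back to right $\Lambda_{\mathbf t}$-modules and show that two conditions on $X=D(V)$ contradict each other unless $X=0$: a nonzero second syzygy always admits a nonzero map to $\Lambda_{\mathbf t}$, while a module of grade $\ge 1$ admits none.

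First I record the two dual descriptions already established. Fix $0\le i<n$ and put $q=n-i\ge1$. The proof of Theorem~\ref{thm:finite-intrinsic} gives
\[
\mathcal T_i(\mathbf t)=D\bigl(\mathcal G_q(\Lambda_{\mathbf t})\bigr).
\]
The proof of Theorem~\ref{thm:top-finite} (via Lemma~\ref{lem:dual-syzygy}) gives
\[
\mathcal T_n(\mathbf t)=D\bigl(\operatorname{Syz}_2(\modcat\text{-}\Lambda_{\mathbf t})\bigr).
\]
Let $V\in\mathcal T_i(\mathbf t)\cap\mathcal T_n(\mathbf t)$ and put $X=D(V)$. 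Since $D$ is an exact duality with $D^2\cong\mathrm{id}$, and both classes are closed under isomorphism, $X$ lies in both $\mathcal G_q(\Lambda_{\mathbf t})$ and $\operatorname{Syz}_2(\modcat\text{-}\Lambda_{\mathbf t})$.

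Next I use each membership. From $X\in\mathcal G_q(\Lambda_{\mathbf t})$ with $q\ge1$, the grade is at least $1$. In particular
\[
\Hom_{\Lambda_{\mathbf t}^{\mathrm{op}}}(X,\Lambda_{\mathbf t})=\Ext^0_{\Lambda_{\mathbf t}^{\mathrm{op}}}(X,\Lambda_{\mathbf t})=0.
\]
From $X\in\operatorname{Syz}_2$, there is a monomorphism $X\hookrightarrow Q_1$ with $Q_1$ a finitely generated projective right module. So $Q_1$ is a direct summand of some $\Lambda_{\mathbf t}^m$, and $X$ embeds into $\Lambda_{\mathbf t}^m$.

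Finally I combine the two. If $X\neq0$, some coordinate projection $\Lambda_{\mathbf t}^m\to\Lambda_{\mathbf t}$ restricts to a nonzero map $X\to\Lambda_{\mathbf t}$. This contradicts the vanishing of $\Hom(X,\Lambda_{\mathbf t})$, so $X=0$ and hence $V\cong D(X)=0$.

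No step is a real obstacle. The only point needing care is to use the identifications $\mathcal T_i(\mathbf t)=D(\mathcal G_q)$ and $\mathcal T_n(\mathbf t)=D(\operatorname{Syz}_2)$ at the level of essential images, so that isomorphism-closure lets $X=D(V)$ lie in both classes simultaneously. The hypothesis $i<n$ enters exactly through $q\ge1$, which is what forces $\Hom(X,\Lambda_{\mathbf t})=0$.
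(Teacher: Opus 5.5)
Your proposal is correct and follows essentially the same route as the paper: dualize to right modules, observe that a nonzero second syzygy embeds in some $\Lambda_{\mathbf t}^m$ and so has a nonzero coordinate map to $\Lambda_{\mathbf t}$, and conclude that its grade is $0$, which contradicts grade at least $1$. The only cosmetic difference is that the paper first reduces to showing $\operatorname{Syz}_2(\modcat\text{-}\Lambda_{\mathbf t})\cap\mathcal G_1(\Lambda_{\mathbf t})=0$, whereas you work with $\mathcal G_q(\Lambda_{\mathbf t})$ directly and use $q\ge1$.
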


\begin{proof}
By Theorem~\ref{thm:finite-intrinsic} it is enough to show
\[
\operatorname{Syz}_2(\modcat\text{-}\Lambda_{\mathbf t})
\cap
\mathcal G_1(\Lambda_{\mathbf t})=0.
\]
If $0\ne X\in\operatorname{Syz}_2(\modcat\text{-}\Lambda_{\mathbf t})$, then $X$ embeds in a finitely generated projective right module $P$.  Since $P$ is a direct summand of $\Lambda_{\mathbf t}^m$ for some $m$, there is a nonzero map $X\to\Lambda_{\mathbf t}^m$, hence a nonzero coordinate map $X\to\Lambda_{\mathbf t}$.  Thus
\[
\Hom_{\Lambda_{\mathbf t}^{\mathrm{op}}}(X,\Lambda_{\mathbf t})\ne0,
\]
so $\grade_{\Lambda_{\mathbf t}^{\mathrm{op}}}X=0$.  Hence $X\notin\mathcal G_1(\Lambda_{\mathbf t})$.
\end{proof}

\begin{proposition}[Non-abelianity in top degree]\label{prop:top-nonabelian}
Assume that $P_{\mathbf t}$ has a nonzero vertex, equivalently
$\mathbf t\ne\mathbf0$.  Then $\mathcal T_n(\mathbf t)$, and hence
$\mathcal H_n(\mathbf t)$, is not an abelian category.
\end{proposition}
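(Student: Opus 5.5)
We need to show that $\mathcal T_n(\mathbf t)$, the category of second cosyzygies, is not abelian when $\mathbf t\ne\mathbf0$. The plan is to use the fact that $\mathcal T_n(\mathbf t)$ is a full, additive, extension-closed subcategory of $\Rep(P_{\mathbf t})$ containing all injectives. We then produce a morphism in it whose kernel cannot be computed inside it.

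The first step is a general observation. Suppose $\mathcal T_n(\mathbf t)$ were abelian, with its own kernels and cokernels; a priori these need not agree with those computed in $\Rep(P_{\mathbf t})$. Take the morphism $f:I^{\mathbf t}_{\mathbf t}\to I^{\mathbf t}_{\mathbf 0}$ that is the identity at vertex $\mathbf0$. It is nonzero, since $\mathbf0\le\mathbf t$ gives $\Hom(I_{\mathbf t},I_{\mathbf 0})\cong D((I_{\mathbf t})_{\mathbf0})^*\ne0$. Both objects are injective, hence lie in $\mathcal T_n(\mathbf t)$ by Theorem~\ref{thm:top-finite}. In $\Rep(P_{\mathbf t})$, $f$ is an epimorphism, with kernel $K$ equal to the representation $\Bbbk$ on $P_{\mathbf t}\setminus\{\mathbf0\}$ and $0$ at $\mathbf0$. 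Since $\mathbf t\ne\mathbf0$, $K$ is nonzero and $f$ is not an isomorphism.

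Next we check that $f$ is a monomorphism inside $\mathcal T_n(\mathbf t)$. Let $g:Y\to I^{\mathbf t}_{\mathbf t}$ with $Y\in\mathcal T_n(\mathbf t)$ and $fg=0$. Then $g$ factors through $K$. We claim $\Hom(Y,K)=0$ for every $Y\in\mathcal T_n(\mathbf t)$.

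By Theorem~\ref{thm:top-finite}(3), $Y$ is a quotient of a finite sum of lower intervals $I_{\mathbf a}$, so it suffices to show $\Hom(I_{\mathbf a},K)=0$. The image of a map $I_{\mathbf a}\to K$ is a quotient of $I_{\mathbf a}$; any nonzero quotient of $I_{\mathbf a}$ is a thin module supported on a lower set of $[\mathbf0,\mathbf a]$ containing the minimum $\mathbf0$. Concretely, a nonzero morphism $I_{\mathbf a}\to K$ would be nonzero at some vertex $\mathbf b\le\mathbf a$. Naturality along $\mathbf0\le\mathbf b$, where $I_{\mathbf a}$ has the identity map $\Bbbk\to\Bbbk$, then forces the component at $\mathbf b$ to equal the composite through $K_{\mathbf0}=0$, hence to vanish. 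So $\Hom(Y,K)=0$, $g=0$, and $f$ is monic in $\mathcal T_n(\mathbf t)$.

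Dually, $f$ is an epimorphism in $\Rep(P_{\mathbf t})$, hence also in the full subcategory. In an abelian category a morphism that is both monic and epic is an isomorphism. So $f$ would have an inverse $h$ in $\mathcal T_n(\mathbf t)$, which is also an inverse in $\Rep(P_{\mathbf t})$ because the subcategory is full. This contradicts $K\ne0$. The conclusion for $\mathcal H_n(\mathbf t)$ follows via the equivalence $\rho_{\mathbf t}$.

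The main obstacle is the vanishing $\Hom(Y,K)=0$ for $Y$ a cokernel of a map between sums of lower intervals. I would handle it as above, using that every vertex of $P_{\mathbf t}$ lies above $\mathbf0$ and that $I_{\mathbf a}$ has identity structure maps from $\mathbf0$. The remaining steps are formal.
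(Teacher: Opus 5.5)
Your proposal is correct and follows essentially the paper's argument: the canonical epimorphism from a principal lower-interval injective onto $S_{\mathbf0}=I^{\mathbf t}_{\mathbf0}$ is monic inside $\mathcal T_n(\mathbf t)$ because $\Hom(V,K)=0$ for every $V\in\mathcal T_n(\mathbf t)$, and since it is not an isomorphism, $\mathcal T_n(\mathbf t)$ is not balanced, hence not abelian. The differences are cosmetic: you fix $\mathbf a=\mathbf t$ and obtain $\Hom(V,K)=0$ from a cover $\bigoplus I^{\mathbf t}_{\mathbf a}\twoheadrightarrow V$, whereas the paper observes directly that every structure map $V_{\mathbf0}\to V_{\mathbf b}$ is surjective; also, your $D((I_{\mathbf t})_{\mathbf0})^*$ should simply read $D((I^{\mathbf t}_{\mathbf t})_{\mathbf0})$.
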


\begin{proof}
Choose $\mathbf a\ne\mathbf0$ and let
\[
\pi:I^{\mathbf t}_{\mathbf a}\longrightarrow
I^{\mathbf t}_{\mathbf0}=S_{\mathbf0}
\]
be the canonical epimorphism which is the identity at $\mathbf0$ and zero
elsewhere.  Both terms lie in $\mathcal T_n(\mathbf t)$.  Its kernel $K$ in
$\Rep(P_{\mathbf t})$ is nonzero and satisfies $K_{\mathbf0}=0$; hence
$K\in\mathcal T_{n-1}(\mathbf t)$ by
Theorem~\ref{thm:finite-intrinsic}, and therefore
$K\notin\mathcal T_n(\mathbf t)$ by Proposition~\ref{prop:top-disjoint}.
Thus $\mathcal T_n(\mathbf t)$ is not closed under kernels computed in
$\Rep(P_{\mathbf t})$.  To show that it is not even an abelian category in
its own right, we argue intrinsically as follows.

For every $V\in\mathcal T_n(\mathbf t)$, Theorem~\ref{thm:top-finite}(3)
shows that each structure map $V_{\mathbf0}\to V_{\mathbf b}$ is surjective:
this is true for every principal injective and is preserved by finite direct
sums and cokernels.  Since $K_{\mathbf0}=0$, naturality gives
$\Hom(V,K)=0$.  Thus $\pi$ is a monomorphism in
$\mathcal T_n(\mathbf t)$; it is also an epimorphism there, since it is one in
$\Rep(P_{\mathbf t})$, but it is not an isomorphism.  An abelian category is
balanced, so $\mathcal T_n(\mathbf t)$ is not abelian.  The assertion for
$\mathcal H_n(\mathbf t)$ follows from $\rho_{\mathbf t}$.
\end{proof}

Combining Proposition~\ref{prop:top-disjoint} with Theorem~\ref{thm:finite-intrinsic} for $i=n-1$ gives
\[
0\ne V\in\mathcal T_n(\mathbf t)
\quad\Longrightarrow\quad
V_{\mathbf0}\ne0.
\]
Moreover, the only vertex simple in the top category is $S_{\mathbf0}$.  Indeed, $S_{\mathbf0}=I^{\mathbf t}_{\mathbf0}$ is injective and hence belongs to $\mathcal T_n(\mathbf t)$ by Theorem~\ref{thm:top-finite}, whereas every $S_{\mathbf a}$ with $\mathbf a\ne\mathbf0$ belongs to $\mathcal T_{n-1}(\mathbf t)$ and is therefore excluded by Proposition~\ref{prop:top-disjoint}.

For an object $X$ of an additive category, write $\operatorname{add}X$ for
the full subcategory of direct summands of finite direct sums of copies of
$X$.  For full additive subcategories $\mathcal X,\mathcal Y$ of an abelian
category, write $\mathcal X\bullet\mathcal Y$ for the additive closure of
objects $E$ occurring in a short exact sequence
\[
0\longrightarrow X\longrightarrow E\longrightarrow Y\longrightarrow0,
\qquad X\in\mathcal X,\quad Y\in\mathcal Y.
\]
We use the standard convention that a pair $(\mathcal T,\mathcal F)$ of full
subcategories is a \emph{torsion pair} if
$\Hom(\mathcal T,\mathcal F)=0$ and every object $E$ occurs in a short exact
sequence $0\to T\to E\to F\to0$ with $T\in\mathcal T$ and
$F\in\mathcal F$.  It is \emph{hereditary} if $\mathcal T$ is closed under
subobjects, equivalently if $\mathcal T$ is a Serre subcategory.

The order-ideal recollement developed in
Section~\ref{sec:general-fs-theory} will recover all successive-layer torsion
pairs uniformly.  The passage from $\mathcal H_{n-1}(\mathbf t)$ to the exceptional top category
$\mathcal H_n(\mathbf t)$ behaves differently.

\begin{proposition}[Top--penultimate extension decomposition]\label{prop:top-penultimate}
Inside $\Rep(P_{\mathbf t})$ one has
\[
\Rep(P_{\mathbf t})
=
\mathcal T_{n-1}(\mathbf t)\bullet\operatorname{add}S_{\mathbf0}
=
\mathcal T_{n-1}(\mathbf t)\bullet\mathcal T_n(\mathbf t).
\]
Moreover
\[
\bigl(\mathcal T_{n-1}(\mathbf t),\operatorname{add}S_{\mathbf0}\bigr)
\]
is a hereditary torsion pair in $\Rep(P_{\mathbf t})$.  In general
$\bigl(\mathcal T_{n-1}(\mathbf t),\mathcal T_n(\mathbf t)\bigr)$ is
not a torsion pair.
\end{proposition}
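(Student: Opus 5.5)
For $V\in\Rep(P_{\mathbf t})$ I would take the largest subrepresentation supported on $U_1(\mathbf t)=P_{\mathbf t}\setminus\{\mathbf0\}$. Since $\{\mathbf0\}$ is a lower order ideal and $U_1(\mathbf t)$ is an upper order ideal, the subspaces $T_{\mathbf a}=V_{\mathbf a}$ for $\mathbf a\ne\mathbf0$ and $T_{\mathbf0}=0$ form a subrepresentation $T\subseteq V$. Indeed, structure maps leaving $\mathbf a\ne\mathbf0$ land in vertices $\ne\mathbf0$. The quotient $V/T$ is concentrated at $\mathbf0$, so $V/T\cong S_{\mathbf0}^{\dim V_{\mathbf0}}\in\operatorname{add}S_{\mathbf0}$. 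By Theorem~\ref{thm:finite-intrinsic} with $i=n-1$, $q=1$, we have $T\in\mathcal T_{n-1}(\mathbf t)=\Rep(U_1(\mathbf t))$. This gives the sequence $0\to T\to V\to V/T\to0$ and hence the first equality. We also need $\operatorname{Hom}(\mathcal T_{n-1}(\mathbf t),\operatorname{add}S_{\mathbf0})=0$, which holds because a map $X\to S_{\mathbf0}^m$ is nonzero only at vertex $\mathbf0$, where $X_{\mathbf0}=0$. So the pair is a torsion pair. It is hereditary because $\mathcal T_{n-1}(\mathbf t)$ is Serre by Theorem~\ref{thm:finite-intrinsic}.

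For the second equality, $S_{\mathbf0}=I^{\mathbf t}_{\mathbf0}$ is injective, so it lies in $\mathcal T_n(\mathbf t)$ by Theorem~\ref{thm:top-finite}. The class $\mathcal T_n(\mathbf t)$ is additive, since second cosyzygies are closed under finite direct sums. The $\bullet$-construction already takes additive closure. Hence
\[
\Rep(P_{\mathbf t})=\mathcal T_{n-1}(\mathbf t)\bullet\operatorname{add}S_{\mathbf0}\subseteq\mathcal T_{n-1}(\mathbf t)\bullet\mathcal T_n(\mathbf t)\subseteq\Rep(P_{\mathbf t}).
\]

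For the negative claim, the main point is to exhibit a nonzero morphism from $\mathcal T_{n-1}(\mathbf t)$ to $\mathcal T_n(\mathbf t)$ whenever $\mathbf t\ne\mathbf0$. This is the step needing care, since it fails for $\mathbf t=\mathbf0$ (then $\mathcal T_{n-1}=0$ and the pair is trivially a torsion pair). So "in general" should be read as "for $\mathbf t\ne\mathbf0$". Choose $\mathbf a\ne\mathbf0$. The simple $S_{\mathbf a}$ lies in $\mathcal T_{n-1}(\mathbf t)$, and $I^{\mathbf t}_{\mathbf a}$ lies in $\mathcal T_n(\mathbf t)$. The simple $S_{\mathbf a}$ is the socle-type subobject of $I^{\mathbf t}_{\mathbf a}$ at its top vertex. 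The inclusion at vertex $\mathbf a$ is compatible with all structure maps: every map out of $\mathbf a$ in $I^{\mathbf t}_{\mathbf a}$ goes to vertices $\mathbf b\ge\mathbf a$, $\mathbf b\ne\mathbf a$, where $I^{\mathbf t}_{\mathbf a}$ vanishes. This gives a nonzero map $S_{\mathbf a}\hookrightarrow I^{\mathbf t}_{\mathbf a}$. Therefore $\operatorname{Hom}(\mathcal T_{n-1}(\mathbf t),\mathcal T_n(\mathbf t))\ne0$, so the pair is not a torsion pair.
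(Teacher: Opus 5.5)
Your proposal is correct and matches the paper's proof step for step. You use the same subrepresentation supported off $\mathbf0$ with quotient $S_{\mathbf0}^{\dim V_{\mathbf0}}$, Hom-vanishing from $X_{\mathbf0}=0$, heredity from the Serre property of $\mathcal T_{n-1}(\mathbf t)$, and the witness $S_{\mathbf a}\hookrightarrow I^{\mathbf t}_{\mathbf a}$ for $\mathbf a\ne\mathbf0$. Your caveat that the negative claim needs $\mathbf t\ne\mathbf0$ is the paper's own qualification, since it says the orthogonality fails whenever the box has a nonzero vertex.
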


\begin{proof}
By Theorem~\ref{thm:finite-intrinsic},
\[
\mathcal T_{n-1}(\mathbf t)
=
\{V\in\Rep(P_{\mathbf t})\mid V_{\mathbf0}=0\}.
\]
For $V\in\Rep(P_{\mathbf t})$, define a subrepresentation $V^{>0}$ by
\[
(V^{>0})_{\mathbf0}=0,
\qquad
(V^{>0})_{\mathbf a}=V_{\mathbf a}
\quad(\mathbf a\ne\mathbf0).
\]
This is a subrepresentation because $\mathbf0$ is the minimal element of
$P_{\mathbf t}$.  Hence
$V^{>0}\in\mathcal T_{n-1}(\mathbf t)$ and
\[
V/V^{>0}\cong
S_{\mathbf0}^{\,\dim_\Bbbk V_{\mathbf0}}.
\]
Since $S_{\mathbf0}\in\mathcal T_n(\mathbf t)$, the two asserted
extension equalities follow.

If $X\in\mathcal T_{n-1}(\mathbf t)$, then $X_{\mathbf0}=0$, and hence
$\Hom(X,S_{\mathbf0})=0$.  The displayed short exact sequence therefore
shows that
$\bigl(\mathcal T_{n-1}(\mathbf t),\operatorname{add}S_{\mathbf0}\bigr)$
is a torsion pair.  It is hereditary because
$\mathcal T_{n-1}(\mathbf t)$ is a Serre subcategory.

Finally, if $\mathbf a\ne\mathbf0$, then
$S_{\mathbf a}\in\mathcal T_{n-1}(\mathbf t)$ and the principal
lower-interval representation
$I^{\mathbf t}_{\mathbf a}$ belongs to $\mathcal T_n(\mathbf t)$,
while
\[
\Hom(S_{\mathbf a},I^{\mathbf t}_{\mathbf a})\ne0.
\]
Thus the required Hom-orthogonality for
$\bigl(\mathcal T_{n-1}(\mathbf t),\mathcal T_n(\mathbf t)\bigr)$
fails whenever the box has a nonzero vertex.
\end{proof}

\subsection{Global representation model and the top category}\label{sec:global}

The finite-box theorem globalizes particularly cleanly through Proposition~\ref{prop:global-fs}.  For $q\ge0$ set
\[
U_q=\{\mathbf a\in\mathbb N^n\mid \ell(\mathbf a)\ge q\}.
\]

\begin{theorem}[Global representation model]\label{thm:global-model}
Let $0\le i<n$ and put $q=n-i$.  Then the equivalence
\[
\rho:\mathcal A^-\xrightarrow{\sim}\Rep_{\mathrm{fs}}(\mathbb N^n)
\]
restricts to an exact equivalence
\[
\mathcal H_i\xrightarrow{\sim}\Rep_{\mathrm{fs}}(U_q).
\]
Consequently $\mathcal H_i$ is a Serre subcategory of $\mathcal A^-$ and
\[
\mathcal H_0\subsetneq\mathcal H_1\subsetneq\cdots\subsetneq\mathcal H_{n-1}.
\]
\end{theorem}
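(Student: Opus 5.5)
The strategy is to reduce everything to the finite-box statement, Theorem~\ref{thm:finite-intrinsic}, by means of the compatibility between the finite-box equivalences $\rho_{\mathbf t}$ and the global equivalence $\rho$ of Proposition~\ref{prop:global-fs}.

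The first step is to check that $\rho$ restricted to $\mathcal A^-_{\mathbf t}$ agrees with $\rho_{\mathbf t}$ followed by extension by zero from $P_{\mathbf t}$ to $\mathbb N^n$. This holds because an object of $\mathcal A^-_{\mathbf t}$ vanishes in degrees not $\le\mathbf t$. Combined with Theorem~\ref{thm:finite-intrinsic}, this gives that $\rho(\mathcal H_i(\mathbf t))$ is exactly the class of finite-support representations $V$ of $\mathbb N^n$ with $\operatorname{supp}V\subseteq U_q(\mathbf t)=U_q\cap P_{\mathbf t}$.

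Next I take the union over $\mathbf t$. The image $\rho(\mathcal H_i)$ is the union of these classes. Every $V\in\Rep_{\mathrm{fs}}(U_q)$, extended by zero, has finite support and so lies in some box $P_{\mathbf t}$. Hence $\rho(\mathcal H_i)=\Rep_{\mathrm{fs}}(U_q)$, identified with representations supported in $U_q$, which is legitimate because $U_q$ is an upper ideal of $\mathbb N^n$. One point of care is that $\mathcal H_i$ is defined as a union of essential images, so membership must be tested up to isomorphism in $\mathcal A^-$. Since $\rho$ is an equivalence and the support condition is invariant under isomorphism, this causes no difficulty. The restriction of $\rho$ is then an exact equivalence onto $\Rep_{\mathrm{fs}}(U_q)$.

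For the Serre property, the class of finite-support representations supported in an upper ideal is closed under subobjects, quotients and extensions in $\Rep_{\mathrm{fs}}(\mathbb N^n)$. Indeed, supports are controlled vertexwise by exactness: for a short exact sequence $0\to V'\to V\to V''\to0$ one has $\operatorname{supp}V=\operatorname{supp}V'\cup\operatorname{supp}V''$. Transporting this along $\rho$ shows that $\mathcal H_i$ is Serre in $\mathcal A^-$.

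For the chain, $U_{q}\supsetneq U_{q+1}$ gives $\mathcal H_{i-1}\subseteq\mathcal H_i$. Strictness is witnessed by the vertex simple $S_{\mathbf a}$ with $\mathbf a=\sum_{j\le q}\varepsilon_j$, which has $\ell(\mathbf a)=q$. It lies in $\Rep_{\mathrm{fs}}(U_q)$ but not in $\Rep_{\mathrm{fs}}(U_{q+1})$, for $1\le q\le n$.

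The only genuine obstacle is the compatibility of $\rho$ with the $\rho_{\mathbf t}$ under enlarging boxes. This is already implicit in the proof of Proposition~\ref{prop:global-fs}, so the argument is mostly bookkeeping.
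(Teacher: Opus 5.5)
Your proposal is correct and follows essentially the same route as the paper: place everything in a common box $P_{\mathbf t}$, apply Theorem~\ref{thm:finite-intrinsic} to identify membership with the vanishing of $\rho(N)$ off $U_q$, note that support conditions cut out Serre subcategories, and use a vertex simple of support rank $q$ to witness strictness. Your version only spells out more carefully the bookkeeping the paper leaves implicit: compatibility of $\rho$ with the $\rho_{\mathbf t}$, the passage to the union over boxes, and isomorphism-closure of the essential images.
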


\begin{proof}
Take $N\in\mathcal A^-$ and choose $\mathbf t$ such that $N\in\mathcal A^-_{\mathbf t}$.  By Theorem~\ref{thm:finite-intrinsic}, $N$ belongs to $\mathcal H_i(\mathbf t)$ if and only if its nonnegative restriction vanishes at every vertex $\mathbf a$ with $\ell(\mathbf a)<q$.  This is exactly the condition that $\rho(N)$ be supported on $U_q$.  Since support conditions define Serre subcategories in a functor category, the assertion follows.  Strictness is witnessed by the vertex simples with $\ell(\mathbf a)=q$.
\end{proof}

\begin{corollary}\label{cor:global-top}
The category $\mathcal H_n$ is extension-closed in $\mathcal A^-$, and
\[
\mathcal H_i\cap\mathcal H_n=0
\qquad(0\le i<n).
\]
If $n\ge1$, then $\mathcal H_n$ is not an abelian category.
\end{corollary}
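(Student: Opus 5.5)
The plan is to reduce each of the three assertions about $\mathcal H_n$ to the finite-box results, transporting them through the equivalences $\rho_{\mathbf t}$ and $\rho$. The basic observation is that $\mathcal A^-$ is the directed union of the exact full subcategories $\mathcal A^-_{\mathbf t}$, and that $\mathcal H_n(\mathbf t)\subseteq\mathcal H_n(\mathbf u)$ whenever $\mathbf t\le\mathbf u$. Consequently any finite configuration of objects and morphisms in $\mathcal A^-$ lives in a single box $\mathcal A^-_{\mathbf u}$ with $\mathbf u$ large.

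\emph{Extension closure.} Let $0\to N'\to N\to N''\to0$ be exact in $\mathcal A^-$ with $N'\in\mathcal H_n(\mathbf t')$ and $N''\in\mathcal H_n(\mathbf t'')$. I will choose $\mathbf u\ge\mathbf t',\mathbf t''$ large enough that $N\in\mathcal A^-_{\mathbf u}$ as well. Then $N',N''\in\mathcal H_n(\mathbf u)$. Since $\mathcal A^-_{\mathbf u}$ is a full exact subcategory, the sequence is exact there, and $\rho_{\mathbf u}$ carries it to a short exact sequence in $\Rep(P_{\mathbf u})$. Theorem~\ref{thm:top-finite} says $\mathcal T_n(\mathbf u)$ is extension-closed, so $\rho_{\mathbf u}(N)\in\mathcal T_n(\mathbf u)$, which gives $N\in\mathcal H_n(\mathbf u)\subseteq\mathcal H_n$.

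\emph{Trivial intersection.} Suppose $N\in\mathcal H_i\cap\mathcal H_n$. Choosing a common large $\mathbf u$ puts $N$ in $\mathcal H_i(\mathbf u)\cap\mathcal H_n(\mathbf u)$, and Proposition~\ref{prop:top-disjoint} then forces $N=0$. One point needs checking: an object of $\mathcal H_i$ that lies in $\mathcal A^-_{\mathbf u}$ must belong to $\mathcal H_i(\mathbf u)$ itself, not merely to some possibly different $\mathcal H_i(\mathbf t)$. For $i<n$ this follows from the intrinsic support descriptions in Theorems~\ref{thm:finite-intrinsic} and~\ref{thm:global-model}. For $i=n$ I will instead use the monotonicity $\mathcal H_n(\mathbf t)\subseteq\mathcal H_n(\mathbf u)$, taking $\mathbf u$ to dominate the index $\mathbf t$ that witnesses $N\in\mathcal H_n(\mathbf t)$.

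\emph{Non-abelianity.} This is the main step. I will reuse the morphism from Proposition~\ref{prop:top-nonabelian} with $\mathbf t=\varepsilon_1$, $\mathbf a=\varepsilon_1$: the map $\pi\colon I_{\varepsilon_1}\to S_{\mathbf0}$, pulled back to $\mathcal A^-$ by $\rho^{-1}$. Both its source and target lie in $\mathcal H_n$, and $\pi$ is an epimorphism in $\mathcal A^-$ but not an isomorphism. Since $\mathcal H_n$ is a full subcategory, $\pi$ is still an epimorphism there.

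The delicate part is showing that $\pi$ is a monomorphism inside all of $\mathcal H_n$, not just inside $\mathcal H_n(\varepsilon_1)$. Let $g\colon V\to I_{\varepsilon_1}$ be a morphism with $V\in\mathcal H_n$ and $\pi g=0$. Its image lies in $K=\ker\pi$, so $g$ factors through $K$. Put $V$ in some box $\mathcal H_n(\mathbf u)$ and work there. The argument in Proposition~\ref{prop:top-nonabelian} shows that every structure map $V_{\mathbf0}\to V_{\mathbf b}$ is surjective; in particular $V$ is generated at $\mathbf0$. Since $K_{\mathbf0}=0$, naturality gives $\Hom(V,K)=0$, so $g=0$. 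Thus $\pi$ is a monomorphism and an epimorphism in $\mathcal H_n$ that is not an isomorphism. This contradicts balancedness, so $\mathcal H_n$ is not abelian.
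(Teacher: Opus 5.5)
Your proposal is correct and follows essentially the same route as the paper. Extension closure and the trivial intersection are reduced to Theorem~\ref{thm:top-finite} and Proposition~\ref{prop:top-disjoint} in a common finite box. Non-abelianity uses the epimorphism $I_{\varepsilon_1}\to S_{\mathbf0}$, which is monic in $\mathcal H_n$ because every $V\in\mathcal H_n$ has surjective structure maps out of $V_{\mathbf0}$ while the kernel vanishes at $\mathbf0$. Your separate check for $i<n$ is harmless but unnecessary: the monotonicity $\mathcal H_i(\mathbf t)\subseteq\mathcal H_i(\mathbf u)$ for $\mathbf t\le\mathbf u$ already handles every $i$.
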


\begin{proof}
Given a short exact sequence in $\mathcal A^-$ with end terms in
$\mathcal H_n$, choose $\mathbf t$ large enough that all three terms lie in
$\mathcal A^-_{\mathbf t}$ and apply Theorem~\ref{thm:top-finite}.  The
intersection statement follows similarly from
Proposition~\ref{prop:top-disjoint} in a common finite box.

For the last assertion, use the canonical epimorphism
$I_{\varepsilon_1}\to S_{\mathbf0}$.  Given $V\in\mathcal H_n$, choose a
finite box containing both $V$ and $\varepsilon_1$; the proof of
Proposition~\ref{prop:top-nonabelian} in that box shows that this map is a
monomorphism when tested against $V$.  Hence it is both monic and epic in
$\mathcal H_n$ but not an isomorphism, so $\mathcal H_n$ is not abelian.
\end{proof}

\begin{corollary}\label{cor:global-top-extension}
At the exceptional top step,
\[
\mathcal A^-=
\mathcal H_{n-1}\bullet\operatorname{add}S_{\mathbf0}
=
\mathcal H_{n-1}\bullet\mathcal H_n,
\]
whereas $(\mathcal H_{n-1},\mathcal H_n)$ is not a torsion pair.
\end{corollary}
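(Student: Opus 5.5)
The plan is to reduce to a common finite box, as in the proofs of Theorem~\ref{thm:global-model} and Corollary~\ref{cor:global-top}, and then invoke Proposition~\ref{prop:top-penultimate}. Under the equivalence $\rho$ of Proposition~\ref{prop:global-fs}, $\mathcal H_{n-1}$ corresponds to $\Rep_{\mathrm{fs}}(U_1)$ by Theorem~\ref{thm:global-model}. Here $U_1=\mathbb N^n\setminus\{\mathbf0\}$, so $\mathcal H_{n-1}$ consists of exactly those $N\in\mathcal A^-$ whose restriction vanishes at $\mathbf0$. For the first equality I would take $N\in\mathcal A^-$ and set $V=\rho(N)$. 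Since $\mathbf0$ is the minimum of $\mathbb N^n$, the subobject $V^{>0}$ (zero at $\mathbf0$, equal to $V$ elsewhere) is a finite-support subrepresentation lying in $\Rep_{\mathrm{fs}}(U_1)$. Its quotient is $V/V^{>0}\cong S_{\mathbf0}^{\dim V_{\mathbf0}}$. Transporting back along $\rho$ puts $N$ in $\mathcal H_{n-1}\bullet\operatorname{add}S_{\mathbf0}$. The reverse inclusion $\mathcal H_{n-1}\bullet\operatorname{add}S_{\mathbf0}\subseteq\mathcal A^-$ is trivial, because $\mathcal A^-$ is closed under extensions and direct summands (it is equivalent to the abelian category $\Rep_{\mathrm{fs}}(\mathbb N^n)$).

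For the second equality, I first need $S_{\mathbf0}\in\mathcal H_n$. This holds because $S_{\mathbf0}=I^{\mathbf t}_{\mathbf0}$ is injective in every box, so Theorem~\ref{thm:top-finite} places it in $\mathcal T_n(\mathbf t)$, and $\mathcal H_n(\mathbf t)\subseteq\mathcal H_n$. Consequently $\operatorname{add}S_{\mathbf0}\subseteq\mathcal H_n$. Here I should check that $\mathcal H_n$ is closed under finite sums and summands, which is clear since $\Phi_n^{\mathbf t}$ is additive and $\operatorname{Syz}_2$ is closed under summands. This gives $\mathcal A^-\subseteq\mathcal H_{n-1}\bullet\mathcal H_n$. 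The reverse inclusion again holds because $\mathcal A^-$ is closed under extensions and summands.

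For the failure of the torsion-pair property, I plan to exhibit a nonzero morphism from $\mathcal H_{n-1}$ to $\mathcal H_n$. When $n\ge1$, take $\mathbf a=\varepsilon_1$ and a box $P_{\mathbf t}$ with $\mathbf t\ge\varepsilon_1$. Then $S_{\varepsilon_1}\in\mathcal H_{n-1}$, and the principal lower interval $I^{\mathbf t}_{\varepsilon_1}$ lies in $\mathcal H_n$ via $\rho_{\mathbf t}^{-1}$. The inclusion of the socle gives $\Hom(S_{\varepsilon_1},I^{\mathbf t}_{\varepsilon_1})\ne0$, exactly as in Proposition~\ref{prop:top-penultimate}. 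This Hom is computed in $\mathcal A^-_{\mathbf t}$, which is full in $\mathcal A^-$, so the Hom-orthogonality required of a torsion pair fails.

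The only step that needs real care is the bookkeeping that the finite-box objects and Hom computations embed compatibly into $\mathcal A^-$. Concretely, that means checking that $\mathcal H_n(\mathbf t)\subseteq\mathcal H_n$ and that $\rho$ restricts to $\rho_{\mathbf t}$ on each box; both follow from Proposition~\ref{prop:global-fs}. The degenerate case $n=0$, where everything is trivial, would be excluded or noted separately.
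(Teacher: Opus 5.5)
Your proposal is correct and follows essentially the paper's route: the paper invokes Proposition~\ref{prop:top-penultimate} inside a common finite box and uses the same nonzero map $S_{\mathbf a}\to I_{\mathbf a}$ for $\mathbf a\ne\mathbf0$ to break Hom-orthogonality. That proposition's argument is the $V^{>0}$ splitting with quotient $S_{\mathbf0}^{\dim V_{\mathbf0}}$, together with $S_{\mathbf0}=I^{\mathbf t}_{\mathbf0}\in\mathcal H_n$, which is what you carry out directly in $\Rep_{\mathrm{fs}}(\mathbb N^n)$. Your appeal to summand-closure of $\operatorname{Syz}_2$ is unnecessary, since every object of $\operatorname{add}S_{\mathbf0}$ is some $S_{\mathbf0}^m$, and that is already injective in a finite box.
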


\begin{proof}
The assertion follows from Proposition~\ref{prop:top-penultimate} inside a
common finite box.  The same nonzero morphism
$S_{\mathbf a}\to I_{\mathbf a}$, for $\mathbf a\ne\mathbf0$, witnesses the
failure of Hom-orthogonality globally.
\end{proof}

\section{Finite-support representations of inward-finite posets}
\label{sec:general-fs-theory}

The aim of this section is to develop a general framework for representations
of posets needed in the subsequent sections.  The posets may be infinite, but
all representations have finite-dimensional vertex spaces and finite support.
Since the infinite posets occurring later are inward finite, we formulate the
one-sided theory in that direction.  The outward-finite theory is obtained by
passing to opposite posets and will not be stated separately.  For finite
posets, of course, both orientations are available.

\subsection{Kan extensions and recollements}
\label{subsec:recollement-background}
We use the standard formalism of abelian recollements as in
\cite{FP,Psaroudakis,PV} and of triangulated recollements as in \cite{BBD}.
Thus an abelian recollement
consists of adjoint triples
\[
i^*\dashv i_*\dashv i^!,
\qquad
j_!\dashv j^*\dashv j_*,
\]
with $i_*,j_!,j_*$ fully faithful and $\operatorname{Im}i_*=\ker j^*$.
We shall repeatedly use the following standard consequences.

\begin{proposition}[TTF and quotient consequences]\label{prop:recollement-ttf}
Let
\[
\begin{tikzcd}[column sep=5.5em]
\mathcal A'
  \arrow[r, "i_*" description]
&
\mathcal A
  \arrow[l, bend right=20, "i^*"']
  \arrow[l, bend left=20, "i^!"]
  \arrow[r, "j^*" description]
&
\mathcal A''
  \arrow[l, bend left=32, "j_!"']
  \arrow[l, bend right=32, "j_*"]
\end{tikzcd}
\]
be an abelian recollement.
\begin{enumerate}[label=\rm(\arabic*)]
\item The triple
\[
\bigl(\ker i^*,\,\operatorname{Im}i_*,\,\ker i^!\bigr)
\]
is a TTF triple in $\mathcal A$; that is,
$(\ker i^*,\operatorname{Im}i_*)$ and
$(\operatorname{Im}i_*,\ker i^!)$ are torsion pairs.
\item The exact functor $j^*$ induces an equivalence
\[
\mathcal A/\operatorname{Im}i_*
\xrightarrow{\sim}
\mathcal A''.
\]
\item If $i^*$ is exact, then $\ker i^*$ is a Serre subcategory,
$(\ker i^*,\operatorname{Im}i_*)$ is a hereditary torsion pair, and
$i^*$ induces an equivalence
\[
\mathcal A/\ker i^*
\xrightarrow{\sim}
\mathcal A'.
\]
\item Dually, if $i^!$ is exact, then $\ker i^!$ is a Serre subcategory and
$i^!$ induces an equivalence
\[
\mathcal A/\ker i^!
\xrightarrow{\sim}
\mathcal A'.
\]
\end{enumerate}
\end{proposition}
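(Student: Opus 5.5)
The argument is standard recollement yoga: we derive everything from the two adjoint triples, the full faithfulness of $i_*,j_!,j_*$, and $\operatorname{Im}i_*=\ker j^*$. The first step records the usual consequences. Full faithfulness of $i_*$ makes the counit $i^*i_*\to\mathrm{id}$ and the unit $\mathrm{id}\to i^!i_*$ isomorphisms. Likewise, full faithfulness of $j_!,j_*$ makes $j^*j_!\cong\mathrm{id}\cong j^*j_*$. From $j^*i_*=0$ and adjunction we get $i^*j_!=0$ and $i^!j_*=0$. We also use the standard exact sequences, valid for every $X\in\mathcal A$:
\[
j_!j^*X\longrightarrow X\longrightarrow i_*i^*X\longrightarrow0,
\qquad
0\longrightarrow i_*i^!X\longrightarrow X\longrightarrow j_*j^*X.
\]
To obtain the first sequence, observe that the cokernel $C$ of the counit $j_!j^*X\to X$ satisfies $j^*C=0$, because $j^*$ is exact (it has adjoints on both sides) and $j^*$ applied to the counit is an isomorphism. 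Hence $C\cong i_*Y$. We then check that $X\to C$ has the universal property of the unit $X\to i_*i^*X$, using $\Hom(j_!j^*X,i_*Z)=\Hom(j^*X,j^*i_*Z)=0$. The second sequence is dual.

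For (1), we show that $(\ker i^*,\operatorname{Im}i_*)$ is a torsion pair. Orthogonality follows from $\Hom(T,i_*Z)\cong\Hom(i^*T,Z)=0$ for $T\in\ker i^*$. For the decomposition, set $T=\ker(X\to i_*i^*X)$. This map is surjective by the first exact sequence. It remains to see $i^*T=0$. Apply the right exact functor $i^*$ to $T\to X\to i_*i^*X\to0$. Since $i^*X\to i^*i_*i^*X$ is an isomorphism, this gives $i^*T\to i^*X\xrightarrow{\sim}i^*X$, which only shows that the image of $i^*T$ is zero. This is the main obstacle. Instead, note that $T$ is a quotient of $j_!j^*X$, since $j_!j^*X\to X$ surjects onto $T$, and that $i^*$ is right exact with $i^*j_!=0$; hence $i^*T=0$. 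The pair $(\operatorname{Im}i_*,\ker i^!)$ is handled dually, using the second sequence with $F=\operatorname{coker}(i_*i^!X\to X)$, which is a subobject of $j_*j^*X$, together with left exactness of $i^!$ and $i^!j_*=0$.

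For (2), the exact functor $j^*$ has kernel $\operatorname{Im}i_*$, which is therefore a Serre subcategory; closure under subobjects, quotients and extensions follows from exactness. By the universal property of the Gabriel quotient, $j^*$ factors through an exact functor $\bar j:\mathcal A/\operatorname{Im}i_*\to\mathcal A''$. Essential surjectivity is given by $j^*j_*\cong\mathrm{id}$. For full faithfulness we use the standard criterion: an exact functor with a fully faithful right adjoint $j_*$ induces an equivalence from the quotient by its kernel (Gabriel's localization theorem; cf. \cite{Psaroudakis}).

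For (3), if $i^*$ is exact then $\ker i^*$ is Serre, being the kernel of an exact functor. The torsion pair from (1) is then hereditary by the convention recalled before Proposition~\ref{prop:top-penultimate}. Moreover $i^*$ has the fully faithful right adjoint $i_*$, so the same localization criterion as in (2) yields $\mathcal A/\ker i^*\simeq\mathcal A'$. Part (4) is the dual statement: $i^!$ is exact with fully faithful left adjoint $i_*$, and we apply the dual criterion.
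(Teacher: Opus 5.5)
Your proposal is correct. The paper gives no argument of its own here. It declares (1)--(4) standard consequences of an abelian recollement and cites \cite[Theorem~4.3]{PV} for the correspondence with TTF triples, \cite[Section~2]{Psaroudakis} for the localization properties, and Gabriel's localization theorem for (3) and (4).

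For (2)--(4) you use exactly that localization criterion: an exact functor with a fully faithful adjoint induces an equivalence from the quotient by its kernel. There the two routes coincide.

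For (1) you do not appeal to the Psaroudakis--Vit\'oria correspondence. Instead you verify the two torsion pairs directly from the canonical sequences
\[
j_!j^*X\to X\to i_*i^*X\to0,
\qquad
0\to i_*i^!X\to X\to j_*j^*X.
\]
Your way around the ``main obstacle'' is the right step: you realize $\ker(X\to i_*i^*X)$ as a quotient of $j_!j^*X$, then use right exactness of $i^*$ together with $i^*j_!=0$. The dual step uses left exactness of $i^!$ and $i^!j_*=0$. This makes visible that (1) needs no exactness hypothesis on $i^*$ or $i^!$. Exactness enters only for heredity and for the quotient equivalences in (3) and (4).

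The citation gives more than the paper needs, namely the converse correspondence between recollements, strong TTF triples and bilocalising Serre subcategories. Your argument instead makes the proof self-contained.
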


These are standard consequences of an abelian recollement.  For the
correspondence between abelian recollements, strong TTF triples, and
bilocalising Serre subcategories, see \cite[Theorem~4.3]{PV}; for the standard
recollement localization properties, see \cite[Section~2]{Psaroudakis}.
Statements~(3) and~(4) also follow from Gabriel's localization theorem for
an exact functor admitting a fully faithful adjoint; see \cite{Gabriel}.

For an arbitrary poset $P$, call $P$ \emph{inward finite} if the lower set
$\{y\in P\mid y\le x\}$ is finite for every $x\in P$.  Dually, $P$ is
\emph{outward finite} if the upper set $\{y\in P\mid x\le y\}$ is finite for
every $x\in P$.  The latter notion will be used only to describe the
order-dual situation and to indicate when an opposite orientation may fail
globally.

An injective map $f:C\to P$ of posets is called a \emph{full inclusion} if
$x\le y$ in $C$ if and only if $f(x)\le f(y)$ in $P$.  Equivalently, it is a
fully faithful functor between the corresponding categories.  When
$C\subseteq P$ has the order induced from $P$, we call $C$ a subposet of $P$;
its inclusion is full.

Let $f:C\hookrightarrow P$ be a full inclusion.  We write $f^*$ for
restriction along $f$, and denote its left and right Kan extensions by
$f_!$ and $f_*$, respectively.  Thus
\[
f_!\dashv f^*\dashv f_*.
\]
For a $\Bbbk$-representation $V$ of $C$ and $p\in P$, the two Kan extensions are
given pointwise by
\[
(f_!V)_p=\operatorname*{colim}_{\substack{c\in C\\c\le p}}V_c,
\qquad
(f_*V)_p=\operatorname*{lim}_{\substack{c\in C\\p\le c}}V_c,
\]
respectively.  We use the convention that the colimit or limit of an empty
diagram is $0$.  If $C$ is an upper order ideal, then $f_!$ is extension by
zero; if $C$ is a lower order ideal, then $f_*$ is extension by zero.

\begin{lemma}\label{lem:fs-kan-support}
Let $f:C\hookrightarrow P$ be a full inclusion and
$V\in\Rep_{\mathrm{fs}}(C)$.  Then
\[
\operatorname{supp}(f_*V)
\subseteq\bigcup_{c\in\operatorname{supp}V}\{p\in P\mid p\le c\}.
\]
Consequently, if $P$ is inward finite, then $f_*$ preserves finite support.
\end{lemma}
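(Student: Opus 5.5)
The plan is to read the support bound directly off the pointwise formula for the right Kan extension recalled above,
\[
(f_*V)_p=\lim_{\substack{c\in C\\ p\le c}}V_c .
\]
Fix $p\in P$ with $(f_*V)_p\ne0$. We show that $p$ lies below some element of $\operatorname{supp}V$ by proving the contrapositive. Suppose no $c\in\operatorname{supp}V$ satisfies $p\le c$. Then every vertex $c$ of the indexing category $\{c\in C\mid p\le c\}$ has $V_c=0$. The limit of a diagram all of whose values are zero vector spaces is zero, and the empty diagram also gives $0$ by the stated convention. Hence $(f_*V)_p=0$, and this is exactly the displayed inclusion
\[
\operatorname{supp}(f_*V)\subseteq\bigcup_{c\in\operatorname{supp}V}\{p\in P\mid p\le c\}.
\]

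For the consequence, assume $P$ is inward finite and $V$ has finite support. The right-hand side above is a finite union, indexed by $\operatorname{supp}V$, of lower sets $\{p\in P\mid p\le c\}$. Here we identify $c$ with $f(c)$, which is harmless since $f$ is a full inclusion. Each of these lower sets is finite by inward finiteness, so the union is finite, and therefore $f_*V$ has finite support.

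It remains to check that $f_*V$ is a genuine object of $\Rep(P)$, i.e.\ has finite-dimensional vertex spaces. At a vertex $p$ in the support, the limit embeds into the product $\prod V_c$ over those $c\ge p$ in $C$. Only the finitely many factors with $c\in\operatorname{supp}V$ are nonzero, each is finite-dimensional, and so $(f_*V)_p$ is finite-dimensional.

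No step is a real obstacle. The only point needing care is the possibly infinite indexing category $\{c\in C\mid p\le c\}$: one should note explicitly that a limit of zero objects vanishes even when the index is infinite, and that finite-dimensionality uses the embedding into the finite product of nonzero terms.
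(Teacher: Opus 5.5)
Your proof is correct and follows the paper's argument: if no element of $\operatorname{supp}V$ lies above $p$, every term in the diagram defining $(f_*V)_p$ is zero, so the limit vanishes; inward finiteness then makes the bound a finite union of finite principal lower sets. Your extra check that $(f_*V)_p$ is finite-dimensional is also correct: the limit embeds in a product with only finitely many nonzero, finite-dimensional factors. The paper leaves this point implicit.
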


\begin{proof}
If no point of $\operatorname{supp}V$ lies above $p$, then every nonzero term
is absent from the diagram defining $(f_*V)_p$, so this limit is zero.  The
containment follows, and inward finiteness makes the right-hand side a finite
union of finite principal lower sets.
\end{proof}

\begin{proposition}[Inward-finite order-ideal recollement]
\label{prop:fs-order-recollement}
Let $Q$ be an upper order ideal of an inward-finite poset $P$ and put
$R=P\setminus Q$, so that $R$ is a lower order ideal.  Write
$u:Q\hookrightarrow P$ and $v:R\hookrightarrow P$ for the inclusions.  Then
$u_*$ preserves finite support and there is an abelian recollement
\[
\begin{tikzcd}[column sep=5.5em]
\Rep_{\mathrm{fs}}(R)
  \arrow[r, "v_*" description]
& \Rep_{\mathrm{fs}}(P)
  \arrow[l, bend right=20, "v^*"']
  \arrow[l, bend left=20, "i^!"]
  \arrow[r, "u^*" description]
& \Rep_{\mathrm{fs}}(Q)
  \arrow[l, bend left=20, "u_!"']
  \arrow[l, bend right=20, "u_*"]
\end{tikzcd}
\]
Here $u_!$ and $v_*$ are extension by zero, and in the standard recollement
notation
\[
i^*=v^*,\qquad i_*=v_*,\qquad
j_!=u_!,\qquad j^*=u^*,\qquad j_*=u_*.
\]
The remaining functor is
\[
i^!M
=
v^*\operatorname{Ker}\!\left(M\longrightarrow u_*u^*M\right).
\]
Equivalently, for $r\in R$,
\[
(i^!M)_r
=
\bigcap_{\substack{q\in Q\\ r\le q}}
\operatorname{Ker}(M_r\longrightarrow M_q),
\]
where the maps are the structure maps of $M$; if there is no $q\in Q$ with
$r\le q$, the intersection is understood to be $M_r$.  The structure maps
of $i^!M$ are induced by those of $M$.

\end{proposition}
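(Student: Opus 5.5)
We verify the recollement axioms directly, using the pointwise Kan extension formulas together with Lemma~\ref{lem:fs-kan-support}.

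\emph{Functors and finite support.} Since $P$ is inward finite, Lemma~\ref{lem:fs-kan-support} shows that $u_*$ preserves finite support. Restrictions $u^*,v^*$ preserve finite support trivially. Since $Q$ is an upper order ideal, $u_!$ is extension by zero; since $R$ is a lower order ideal, $v_*$ is extension by zero. Both therefore preserve finite support. The candidate $i^!M$ is a subrepresentation of $v^*M$, so it also has finite support.

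\emph{Adjunctions.} The adjunctions $u_!\dashv u^*\dashv u_*$ and $v^*\dashv v_*$ hold in the full functor categories by the universal properties of Kan extensions. Since all functors involved preserve finite support, they restrict to the full subcategories $\Rep_{\mathrm{fs}}$.

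\emph{Identifying $i^!$.} Since $v_*$ is extension by zero, a morphism $v_*N\to M$ is a family of maps $N_r\to M_r$, $r\in R$, compatible with the structure maps inside $R$, and composing to zero with every $M_r\to M_q$, $q\in Q$. Equivalently, $v_*N\to M$ is a morphism whose composite with the unit $M\to u_*u^*M$ vanishes, because $u^*v_*N=0$ and $u^*\dashv u_*$. Hence such morphisms factor uniquely through $K=\operatorname{Ker}(M\to u_*u^*M)$.

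The representation $K$ is supported on $R$. Indeed, for $q\in Q$ the unit at $q$ is an isomorphism, because the limit defining $(u_*u^*M)_q$ has initial term $q$, as $Q$ is upward closed. So $K=v_*v^*K$, and therefore $\Hom(v_*N,M)\cong\Hom(N,v^*K)$, which gives $v_*\dashv i^!$.

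The vertexwise formula follows from this. The map $M_r\to(u_*u^*M)_r=\lim_{q\ge r,\,q\in Q}M_q$ is induced by the structure maps, and its kernel is the stated intersection of the kernels of $M_r\to M_q$. If no $q\in Q$ lies above $r$, the limit is $0$ and the kernel is all of $M_r$.

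\emph{Remaining axioms.} The functors $u_!$ and $v_*$ are fully faithful, since the counit $u^*u_!\to\mathrm{id}$ and the counit $v^*v_*\to\mathrm{id}$ are identities. For $u_*$, the counit $u^*u_*V\to V$ is an isomorphism, because for $q\in Q$ the limit over $\{q'\in Q: q\le q'\}$ has initial object $q$. Finally, $\ker u^*$ consists of the representations supported on $R$, which is exactly the essential image of $v_*$.

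The only delicate point is the identification of $i^!$ and the check that it lands in finite support. This is handled by observing that $K$ vanishes on $Q$ and is contained in $M$.
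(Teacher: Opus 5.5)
Your proof is correct and follows essentially the same route as the paper. Both arguments restrict the Kan adjunctions $u_!\dashv u^*\dashv u_*$ to finite support via Lemma~\ref{lem:fs-kan-support}, note that $\ker u^*=\operatorname{Im}v_*$, and identify $i^!$ by observing that naturality forces a morphism $v_*N\to M$ to land in $\bigcap_{q\ge r}\operatorname{Ker}(M_r\to M_q)$. Your extra factorization through $\operatorname{Ker}(M\to u_*u^*M)$, using $\Hom(v_*N,u_*u^*M)\cong\Hom(u^*v_*N,u^*M)=0$, justifies the first formula for $i^!$, which the paper only states; the one harmless slip is that $\mathrm{id}\to u^*u_!$ is the unit of $u_!\dashv u^*$, not a counit.
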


\begin{proof}
By Lemma~\ref{lem:fs-kan-support}, $u_*$ preserves finite support, so
\[
u_!\dashv u^*\dashv u_*
\]
restricts to the finite-support representation categories.  Moreover,
$\ker u^*=\operatorname{Im}v_*$.  Since $v_*$ is extension by zero, its
left adjoint is the restriction $v^*$.  A morphism $v_*W\to M$ is determined
on $R$, and compatibility with every relation $r\le q$, where $r\in R$ and
$q\in Q$, forces its image at $r$ to lie in
\[
\bigcap_{\substack{q\in Q\\ r\le q}}
\operatorname{Ker}(M_r\longrightarrow M_q).
\]
Thus $v_*\dashv i^!$ and the displayed formula for $i^!$ follows.  Hence the
six functors give the recollement.  If $P$ is finite, the same argument in the
order-dual poset gives the opposite orientation.
\end{proof}

\begin{corollary}[Order-ideal torsion and quotients]
\label{cor:order-ideal-consequences}
Under the hypotheses of Proposition~\ref{prop:fs-order-recollement},
\[
\bigl(\Rep_{\mathrm{fs}}(Q),\Rep_{\mathrm{fs}}(R)\bigr)
\]
is a hereditary torsion pair in $\Rep_{\mathrm{fs}}(P)$.  Moreover,
restriction induces equivalences
\[
\Rep_{\mathrm{fs}}(P)/\Rep_{\mathrm{fs}}(R)
\xrightarrow{\sim}\Rep_{\mathrm{fs}}(Q),
\qquad
\Rep_{\mathrm{fs}}(P)/\Rep_{\mathrm{fs}}(Q)
\xrightarrow{\sim}\Rep_{\mathrm{fs}}(R).
\]
\end{corollary}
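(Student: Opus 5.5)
We derive the corollary from the recollement of Proposition~\ref{prop:fs-order-recollement} together with Proposition~\ref{prop:recollement-ttf}. In the recollement notation we have $\mathcal A=\Rep_{\mathrm{fs}}(P)$, $\mathcal A'=\Rep_{\mathrm{fs}}(R)$ and $\mathcal A''=\Rep_{\mathrm{fs}}(Q)$, with $i^*=v^*$, $i_*=v_*$, $j_!=u_!$ and $j^*=u^*$. The first step is to identify the relevant subcategories inside $\Rep_{\mathrm{fs}}(P)$ using the convention that representations of order ideals are identified with their extensions by zero. The essential image of $i_*=v_*$ consists of the finite-support representations supported on $R$, so $\operatorname{Im}i_*=\Rep_{\mathrm{fs}}(R)$. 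Since $i^*=v^*$ is restriction to $R$, its kernel consists of the representations supported on $Q$, so $\ker i^*=\Rep_{\mathrm{fs}}(Q)$.

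Next, restriction $v^*$ is computed vertexwise and is therefore exact. Proposition~\ref{prop:recollement-ttf}(3) then applies directly. It shows that $\ker i^*=\Rep_{\mathrm{fs}}(Q)$ is a Serre subcategory and that $(\Rep_{\mathrm{fs}}(Q),\Rep_{\mathrm{fs}}(R))$ is a hereditary torsion pair. It also shows that $i^*=v^*$ induces an equivalence $\Rep_{\mathrm{fs}}(P)/\Rep_{\mathrm{fs}}(Q)\xrightarrow{\sim}\Rep_{\mathrm{fs}}(R)$, and this equivalence is exactly restriction to $R$.

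For the second quotient, Proposition~\ref{prop:recollement-ttf}(2) says that the exact functor $j^*=u^*$ induces $\Rep_{\mathrm{fs}}(P)/\operatorname{Im}i_*\xrightarrow{\sim}\Rep_{\mathrm{fs}}(Q)$. Since $\operatorname{Im}i_*=\Rep_{\mathrm{fs}}(R)$, this is the other asserted equivalence, again given by restriction. For this quotient to make sense as a Gabriel quotient, $\Rep_{\mathrm{fs}}(R)=\ker u^*$ must be Serre. This holds because $u^*$ is exact.

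As a sanity check, and as a possible direct proof, every $M$ has the canonical sequence
\[
0\longrightarrow M|_Q\longrightarrow M\longrightarrow M|_R\longrightarrow 0 .
\]
Here $M|_Q$ is a subrepresentation because $Q$ is an upper ideal, and the quotient is supported on the lower ideal $R$. In addition, $\Hom(X,Y)=0$ whenever $\operatorname{supp}X\subseteq Q$ and $\operatorname{supp}Y\subseteq R$, because supports are disjoint and morphisms are vertexwise. I do not expect any real obstacle. The only point needing care is the bookkeeping of identifying $\ker v^*$ and $\operatorname{Im}v_*$ as full subcategories, rather than merely up to equivalence, and noting that finite support is preserved by every functor involved. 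For $u_*$ this is Lemma~\ref{lem:fs-kan-support}, and the other functors are restrictions or extensions by zero.
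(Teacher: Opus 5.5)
Your proposal is correct and matches the paper's proof. You identify $\ker i^*=\Rep_{\mathrm{fs}}(Q)$ and $\operatorname{Im}i_*=\Rep_{\mathrm{fs}}(R)$, use Proposition~\ref{prop:recollement-ttf}(3) (which already contains the torsion-pair statement of (1)) for the hereditary torsion pair and the quotient by $\Rep_{\mathrm{fs}}(Q)$, and use Proposition~\ref{prop:recollement-ttf}(2) for the quotient by $\Rep_{\mathrm{fs}}(R)$; your direct check via $0\to M|_Q\to M\to M|_R\to0$ is a valid elementary supplement but not needed.
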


\begin{proof}
In the recollement of Proposition~\ref{prop:fs-order-recollement},
$i^*=v^*$ is the exact restriction functor,
\[
\ker i^*=\Rep_{\mathrm{fs}}(Q),
\qquad
\operatorname{Im}i_*=\Rep_{\mathrm{fs}}(R).
\]
Thus Proposition~\ref{prop:recollement-ttf}(1),(3) gives the hereditary
torsion pair and the second quotient equivalence, while
Proposition~\ref{prop:recollement-ttf}(2) gives the first.
\end{proof}

A descending chain of upper order ideals may therefore be treated
successively.  Iterating Corollary~\ref{cor:order-ideal-consequences} gives a
canonical functorial torsion filtration whose successive factors are supported
on the corresponding strata.  We shall use this observation below for the
support-rank filtration of the local-cohomology categories.

\begin{remark}\label{rem:finite-complementary-recollements}
If $P$ is finite, every Kan extension has finite support, so the order-dual
construction gives the complementary recollement with the two side categories
interchanged.  The displayed recollement is also the standard idempotent
recollement of $A_P$ associated with $e_Q=\sum_{q\in Q}e_q$, since
$e_QA_Pe_Q\simeq A_Q$ and $A_P/A_Pe_QA_P\simeq A_R$.
\end{remark}

The inward-finite hypothesis also provides the boundedness needed below.
Indeed, for $V\in\Rep_{\mathrm{fs}}(P)$ its lower closure
\[
P_V=\bigcup_{p\in\operatorname{supp}V}\{x\in P\mid x\le p\}
\]
is a finite lower ideal.  Resolve $V|_{P_V}$ injectively in $\Rep(P_V)$ and
extend the resolution by zero.  Since extension by zero along a lower-ideal
inclusion is right adjoint to exact restriction, it preserves injectives.
Thus every finite-support representation has finite injective dimension; the
same holds for every subposet of $P$.

\begin{corollary}[Bounded-derived order-ideal recollement]
\label{cor:fs-derived-recollement}
Under the hypotheses of Proposition~\ref{prop:fs-order-recollement}, the
abelian recollement lifts to the triangulated recollement
\[
\begin{tikzcd}[column sep=4.2em]
D^b\!\left(\Rep_{\mathrm{fs}}(R)\right)
  \arrow[r, "v_*" description]
& D^b\!\left(\Rep_{\mathrm{fs}}(P)\right)
  \arrow[l, bend right=20, "v^*"']
  \arrow[l, bend left=20, "\mathbf R i^!"]
  \arrow[r, "u^*" description]
& D^b\!\left(\Rep_{\mathrm{fs}}(Q)\right)
  \arrow[l, bend left=20, "u_!"']
  \arrow[l, bend right=20, "\mathbf R u_*"]
\end{tikzcd}
\]
Here the exact functors $v^*,v_*,u_!$, and $u^*$ are applied degreewise,
whereas the left-exact right adjoints $i^!$ and $u_*$ are replaced by their
total right derived functors $\mathbf R i^!$ and $\mathbf R u_*$,
respectively.  If $P$ is finite, then both recollement orientations lift to
bounded derived categories.
\end{corollary}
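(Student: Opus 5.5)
The plan is to derive the triangulated recollement from the abelian one by the standard criterion for lifting abelian recollements to bounded derived categories; see Psaroudakis and Vitória.

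First I would show that $\mathbf R u_*$ and $\mathbf R i^!$ exist on bounded complexes. By the remark preceding the statement, every object of $\Rep_{\mathrm{fs}}(P)$ and of $\Rep_{\mathrm{fs}}(Q)$ has finite injective dimension. More precisely, an object $V$ has an injective resolution in $\Rep_{\mathrm{fs}}$ obtained by resolving $V|_{P_V}$ inside the finite lower ideal $P_V$ and extending by zero. Since $P_V$ is finite, its length is bounded by the global dimension of $A_{P_V}$. A bounded complex therefore has a bounded injective resolution by Cartan--Eilenberg, so the left-exact functors $u_*$ and $i^!$ derive to functors $D^b\to D^b$. The finite-support property of $u_*$ on injectives is Lemma~\ref{lem:fs-kan-support}.

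Next, the adjunctions. The exact functors $u_!,u^*,v^*,v_*$ act degreewise. Derived adjunctions $u^*\dashv\mathbf R u_*$ and $v_*\dashv\mathbf R i^!$ follow from the abelian ones, because $u^*$ and $v_*$ are exact and preserve injectives. For $u^*$, the left adjoint $u_!$ is exact. For $v_*$, extension by zero along a lower ideal is right adjoint to the exact functor $v^*$. The adjunctions $u_!\dashv u^*$ and $v^*\dashv v_*$ are immediate because all four functors are exact. The condition $\ker u^*=\operatorname{Im}v_*$ at the derived level holds because a complex with $u^*X\simeq0$ has cohomology supported on $R$.

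The main obstacle is full faithfulness of $v_*$, $u_!$ and $\mathbf R u_*$ on $D^b$. I would check it via the unit and counit. The relation $u^*\mathbf R u_*\simeq\mathrm{id}$ holds because $u_*$ of an injective is injective and $u^*u_*=\mathrm{id}$. The relation $u^*u_!=\mathrm{id}$ is clear. For $v_*$, the key point is that $\Ext$ groups between objects of $\Rep_{\mathrm{fs}}(R)$ computed in $\Rep_{\mathrm{fs}}(P)$ agree with those computed in $\Rep_{\mathrm{fs}}(R)$. That is, $\mathbf R i^! v_*\simeq\mathrm{id}$. This holds because $v_*$ preserves injectives, so an injective resolution in $R$ pushes forward to one in $P$, and $i^!v_*=\mathrm{id}$ termwise.

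Finally, I would produce the gluing triangles
$$v_*\mathbf R i^!X\to X\to \mathbf R u_*u^*X\to$$
and
$$u_!u^*X\to X\to v_*v^*X\to.$$
The second is the degreewise short exact sequence given by extension by zero. The first comes from applying the left-exact sequence $0\to v_*i^!I\to I\to u_*u^*I$ to an injective resolution $I$ of $X$. One then checks right-exactness on injectives. It suffices to do this for the indecomposable injectives $I_x$ with $x\in P$, where $I_x\to u_*u^*I_x$ is surjective because it is restriction of a lower-interval representation. For finite $P$, the opposite orientation follows by applying the same argument to $P^{\mathrm{op}}$ together with the duality $D$, as in Remark~\ref{rem:finite-complementary-recollements}.
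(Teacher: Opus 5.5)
Your proof is correct and follows essentially the paper's route. You build bounded injective resolutions from the finite lower closures $P_V$, obtain $u^*\dashv\mathbf R u_*$ and $v_*\dashv\mathbf R i^!$ from exactness of $u^*$ and $v_*$ (so that $u_*$ and $i^!$ preserve injectives), get full faithfulness from $u^*u_!\cong\mathrm{id}$ and $\mathbf R i^!v_*\simeq\mathrm{id}$, and identify $\ker u^*$ through the degreewise sequence $0\to u_!u^*X\to X\to v_*v^*X\to0$. Your explicit gluing triangle $v_*\mathbf R i^!X\to X\to\mathbf R u_*u^*X\to$ adds detail that the paper leaves to the standard characterization of recollements.

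Two minor points. First, the lifting criterion for abelian recollements you invoke at the outset assumes enough projectives, which $\Rep_{\mathrm{fs}}(P)$ lacks for infinite $P$ (e.g.\ $P=\mathbb N^n$). It is your direct verification, not that criterion, that proves the statement. Second, the surjectivity of $I_x\to u_*u^*I_x$ is best justified as follows: the map is an isomorphism for $x\in Q$, while $u^*I_x=0$ for $x\in R$ because $R$ is a lower ideal.
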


\begin{proof}
The preceding argument with the finite lower ideal gives enough injectives, with
bounded injective resolutions, in all three finite-support categories.  Hence
$\mathbf R i^!$ and $\mathbf R u_*$ are defined on the bounded derived
categories and preserve boundedness.  Since $v^*,v_*,u_!$, and $u^*$ are
exact, they act degreewise on bounded derived categories and retain their
adjunctions.  Standard derived adjunction also gives
$v_*\dashv\mathbf R i^!$ and $u^*\dashv\mathbf R u_*$.  The identities
$v^*v_*\cong\mathrm{id}$ and $u^*u_!\cong\mathrm{id}$ imply that $v_*$ and
$u_!$ remain fully faithful.

It remains to identify the kernel of $u^*$.  For
$X^\bullet\in D^b(\Rep_{\mathrm{fs}}(P))$, one has $u^*X^\bullet\simeq0$
if and only if every $H^m(X^\bullet)$ is supported on $R$.  In this case the
unit morphism
\[
X^\bullet\longrightarrow v_*v^*X^\bullet
\]
is a quasi-isomorphism, since it is the identity on $R$ and has acyclic
components on $Q$.  Thus
$\ker u^*=\operatorname{Im}v_*$ on the bounded derived categories, and the six
functors form the displayed triangulated recollement.  This is also the
homological-embedding mechanism appearing in general derived-lifting criteria;
compare \cite[Theorem~7.2]{Psaroudakis}, where enough projectives and
injectives are assumed.  If $P$ is finite, the order-dual argument gives the
complementary orientation.
\end{proof}

\begin{remark}\label{rem:finite-kan-not-exact}
Finiteness of $P$ does not, however, imply that the right Kan extension $u_*$
is exact; it only guarantees preservation of finite support.  Passing to the
complementary recollement orientation does not change the exactness of a given
functor, but replaces the relevant nontrivial adjoints by those belonging to
the other orientation.  In the finite-box support-rank recollement below, the
two stratum Kan extensions are additionally exact by
Lemma~\ref{lem:stratum-kan-exact}; consequently only one of the six functors
in Corollary~\ref{cor:derived-horizontal-recollement} needs to be derived.
\end{remark}

\subsection{Right Serre functors and Kan exchange}

For a set $X$, write $\Bbbk[X]$ for its $\Bbbk$-linearization, that is, the
free $\Bbbk$-vector space with basis $X$.  For $p\in P$, let
\[
P_p:=\Bbbk[P(p,-)]\in\Rep(P)
\]
denote the $\Bbbk$-linearized covariant representable functor.  Its support is
the principal upper set $\{x\in P\mid p\le x\}$, so $P_p$ need not belong to
$\Rep_{\mathrm{fs}}(P)$.  The contravariant representable
$\Bbbk[P(-,p)]$ belongs to $\Rep(P^{\mathrm{op}})$, and let
\[
I_p:=D\!\bigl(\Bbbk[P(-,p)]\bigr)\in\Rep(P)
\]
be its vertexwise dual.  For every $V\in\Rep(P)$, the dual form of the
Yoneda lemma gives a natural isomorphism
\[
\Hom_{\Rep(P)}(V,I_p)\cong D(V_p).
\]
Since evaluation at $p$ and finite-dimensional $\Bbbk$-duality are exact,
$I_p$ is injective in $\Rep(P)$.  Its support is the principal lower set
$\{x\in P\mid x\le p\}$; hence inward finiteness ensures that
$I_p\in\Rep_{\mathrm{fs}}(P)$.  Since $\Rep_{\mathrm{fs}}(P)$ is a full
exact subcategory of $\Rep(P)$, $I_p$ is injective in
$\Rep_{\mathrm{fs}}(P)$ as well.  In fact, if $S_p$ denotes the vertex
simple at $p$, then the canonical monomorphism $S_p\hookrightarrow I_p$ is
essential, so $I_p$ is the injective envelope of $S_p$ in
$\Rep_{\mathrm{fs}}(P)$.

Put
\[
\mathcal P(P):=\operatorname{add}\{P_p\mid p\in P\},
\qquad
\mathcal I(P):=\operatorname{add}\{I_p\mid p\in P\}.
\]
Thus $\mathcal P(P)$ is the category of finitely generated projective objects
of $\Rep(P)$, and, when $P$ is inward finite,
$\mathcal I(P)\subseteq\Rep_{\mathrm{fs}}(P)$.  For
$Q\in\mathcal P(P)$ define
\begin{equation}\label{eq:projective-nakayama}
(\nu_P^0 Q)_x
:=
D\Hom_{\Rep(P)}(Q,P_x)
\qquad(x\in P).
\end{equation}
If $x\le y$, the natural map $P_y\to P_x$ induces, after applying
$\Hom_{\Rep(P)}(Q,-)$ and then $D$, the structure map
$(\nu_P^0Q)_x\to(\nu_P^0Q)_y$.  Hence \eqref{eq:projective-nakayama} defines
an additive functor
\[
\nu_P^0:\mathcal P(P)\longrightarrow\mathcal I(P).
\]
For $p,x\in P$, the Yoneda lemma gives a natural isomorphism
\[
\Hom_{\Rep(P)}(P_p,P_x)
\cong
(P_x)_p
=
\Bbbk[P(x,p)].
\]
Hence
\[
(\nu_P^0P_p)_x
=
D\Hom_{\Rep(P)}(P_p,P_x)
\cong
D\Bbbk[P(x,p)]
=
(I_p)_x.
\]
These isomorphisms are natural in $x$, and therefore assemble to a natural
isomorphism of representations
\[
\nu_P^0(P_p)\cong I_p.
\]

For finite $P$, finite global dimension of the incidence algebra allows
$\nu_P^0$ to extend, via bounded projective resolutions, to the derived
Nakayama functor on $D^b(\Rep(P))$; in this case it coincides with the Serre
functor.  For possibly infinite $P$, inward finiteness keeps
$I_p=\nu_P^0(P_p)$ finitely supported,
while finite $P_p$-resolutions allow passage to the bounded derived
finite-support category.  This motivates the following notion.

We call an inward-finite poset $P$ \emph{right-Serre finite} if every object
of $\Rep_{\mathrm{fs}}(P)$ admits, in $\Rep(P)$, a finite resolution by
objects of $\mathcal P(P)$, equivalently by finite direct sums of the
representables $P_p$.  The terms of such a resolution need not have finite
support.

\begin{lemma}
\label{lem:right-serre-model}
Let $P$ be inward finite.  Then $P$ is right-Serre finite if and only if every
vertex simple $S_p$ admits a finite resolution in $\Rep(P)$ by objects of
$\mathcal P(P)$.  If these equivalent conditions hold, define
\[
K^b_{\mathrm{fs}}(\mathcal P(P))
=
\left\{
Q^\bullet\in K^b(\mathcal P(P))\ \middle|\
H^r(Q^\bullet)\in\Rep_{\mathrm{fs}}(P)\text{ for all }r
\right\}.
\]
Then the canonical functor
$K^b(\mathcal P(P))\to D^b(\Rep(P))$ identifies
$K^b_{\mathrm{fs}}(\mathcal P(P))$ with the essential image of
\[
D^b(\Rep_{\mathrm{fs}}(P))\longrightarrow D^b(\Rep(P)).
\]
In particular, there is a natural triangle equivalence
\begin{equation}\label{eq:bounded-projective-model}
D^b(\Rep_{\mathrm{fs}}(P))
\simeq
K^b_{\mathrm{fs}}(\mathcal P(P)).
\end{equation}
\end{lemma}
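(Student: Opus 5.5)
For the equivalence, one direction is immediate because each $S_p$ has finite support. For the converse, I will argue by induction on composition length. Every $V\in\Rep_{\mathrm{fs}}(P)$ has finite length, since it is finite-dimensional and finitely supported, so its composition factors are vertex simples. Objects admitting finite $\mathcal P(P)$-resolutions form a full subcategory of $\Rep(P)$, and this subcategory is closed under extensions by the horseshoe lemma. Concretely, given $0\to V'\to V\to V''\to0$ with finite resolutions of $V'$ and $V''$, I will construct a finite resolution of $V$ whose terms are the direct sums of the corresponding terms; this works because objects of $\mathcal P(P)$ are projective in $\Rep(P)$ by Yoneda. Induction along a composition series then finishes this part.

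For the triangle equivalence, I plan to regard $D^b(\Rep_{\mathrm{fs}}(P))\to D^b(\Rep(P))$ as a fully faithful functor and then identify its essential image. I expect full faithfulness to be the main obstacle. My first approach is via injectives: by the remark preceding Corollary~\ref{cor:fs-derived-recollement}, every finite-support object has a bounded resolution by finite sums of the $I_p$. These $I_p$ are injective in $\Rep(P)$ as well as in $\Rep_{\mathrm{fs}}(P)$, by the dual Yoneda isomorphism $\Hom(V,I_p)\cong D(V_p)$ and exactness of evaluation. So $D^b(\Rep_{\mathrm{fs}}(P))\simeq K^b(\mathcal I(P))$, and morphisms in both derived categories can be computed in the homotopy category of bounded complexes of injectives. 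For bounded complexes of injectives, homotopy classes of maps compute $\Hom$ in the derived category of either abelian category, since the target is a bounded-below complex of objects injective in $\Rep(P)$. Hence the functor is fully faithful.

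For the essential image, let $X$ be a complex in $D^b(\Rep_{\mathrm{fs}}(P))$. By the equivalence above, each cohomology object of $X$ has a finite $\mathcal P(P)$-resolution. I will induct on the number of nonzero cohomologies using truncation triangles $\tau_{<m}X\to X\to H^m(X)[-m]\to$. Since $K^b(\mathcal P(P))\to D^b(\Rep(P))$ is fully faithful (these are bounded complexes of projectives), its image is triangulated, and so $X$ lies in that image. The cohomology of the resulting complex is that of $X$, which has finite support, so $X$ lands in $K^b_{\mathrm{fs}}(\mathcal P(P))$.

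Conversely, take $Q^\bullet\in K^b_{\mathrm{fs}}(\mathcal P(P))$. Viewed in $D^b(\Rep(P))$, it is isomorphic to an object in the essential image by the same truncation induction, because that image is a triangulated subcategory, being the essential image of a fully faithful triangle functor, and it contains each $H^r(Q^\bullet)[-r]$. Composing the two fully faithful functors then gives the equivalence \eqref{eq:bounded-projective-model}, and this composition is natural.
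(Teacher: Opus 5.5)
Your proposal is correct and follows essentially the same route as the paper. It uses induction on length with the horseshoe lemma for the equivalence, full faithfulness of $D^b(\Rep_{\mathrm{fs}}(P))\to D^b(\Rep(P))$ via common bounded resolutions by the $I_p$, and truncation triangles together with triangulatedness of essential images to identify $K^b_{\mathrm{fs}}(\mathcal P(P))$. The one minor deviation is that you also use cohomological truncation triangles, rather than resolving the terms of a bounded complex and totalizing, to show that every object of $D^b(\Rep_{\mathrm{fs}}(P))$ comes from $K^b(\mathcal P(P))$; this avoids the double-complex bookkeeping that totalization requires.
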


\begin{proof}
Every finite-support representation has finite length, and its composition
factors are vertex simples.  Hence the ``only if'' direction is immediate,
while the converse follows by induction on length and the horseshoe lemma.

For the derived statement, inward finiteness implies that the principal
injectives $I_p$ have finite support.  Consequently
$\Rep_{\mathrm{fs}}(P)$ has enough injectives which are injective also in
$\Rep(P)$: explicitly, one applies the finite-lower-ideal construction given
before Corollary~\ref{cor:fs-derived-recollement}.  Every finite-support object
therefore has a finite resolution by such injectives.  It follows that the
inclusion
$\Rep_{\mathrm{fs}}(P)\hookrightarrow\Rep(P)$ induces a fully faithful
functor on bounded derived categories, since morphisms on either side may be
computed using the same finite injective resolutions.  Bounded complexes of objects of
$\mathcal P(P)$ are $K$-projective, so
$K^b(\mathcal P(P))\to D^b(\Rep(P))$ is fully faithful.  Right-Serre
finiteness allows one to resolve the finitely many terms of any bounded
complex in $\Rep_{\mathrm{fs}}(P)$ and totalize, giving a bounded complex in
$\mathcal P(P)$.  Conversely, if $Q^\bullet\in K^b(\mathcal P(P))$ has
finite-support cohomology, the standard truncation triangles express its
image in $D^b(\Rep(P))$ as an iterated extension of its finite-support
cohomology objects.  Since the fully faithful image of
$D^b(\Rep_{\mathrm{fs}}(P))$ is triangulated, the image of $Q^\bullet$ lies
in it.  This proves \eqref{eq:bounded-projective-model}.
\end{proof}

\begin{lemma}[Lower-ideal inheritance]
\label{lem:right-serre-lower-ideal}
Let $P$ be an inward-finite right-Serre finite poset and let
$j:Q\hookrightarrow P$ be a lower order ideal.  Then $Q$ is right-Serre
finite.
\end{lemma}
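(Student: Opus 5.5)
Inward finiteness of $Q$ is automatic: a principal lower set in $Q$ is contained in the corresponding finite lower set in $P$. By Lemma~\ref{lem:right-serre-model} it then suffices to show that each vertex simple $S^Q_q$ of $\Rep(Q)$, with $q\in Q$, has a finite resolution by objects of $\mathcal P(Q)$. The natural tool is restriction $j^*:\Rep(P)\to\Rep(Q)$, which is exact.

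First, $j^*$ sends representables to representables. For $p\in P$, the support of $j^*P_p$ is $\{x\in Q\mid p\le x\}$. If $p\in Q$, then $j^*P_p=Q_p$, because $Q$ is a full subposet. If $p\notin Q$, then no $x\in Q$ satisfies $p\le x$, since $Q$ is a lower ideal and $x\in Q$ would force $p\in Q$. Hence $j^*P_p=0$. Thus $j^*$ maps $\mathcal P(P)$ into $\mathcal P(Q)$.

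Second, $j^*S^P_q=S^Q_q$ for $q\in Q$. Since $P$ is right-Serre finite, we can choose a finite resolution
\[
0\to Q^m\to\cdots\to Q^0\to S^P_q\to0
\]
with each $Q^r\in\mathcal P(P)$. Applying the exact functor $j^*$ gives an exact sequence
\[
0\to j^*Q^m\to\cdots\to j^*Q^0\to S^Q_q\to0,
\]
and every term $j^*Q^r$ lies in $\mathcal P(Q)$. This is the required finite resolution, and Lemma~\ref{lem:right-serre-model} finishes the proof.

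The only point that needs care is the vanishing $j^*P_p=0$ for $p\notin Q$. This is exactly where the lower-ideal hypothesis is used; for an upper ideal, restriction would produce non-representable pieces such as truncated upper sets. Everything else is formal.
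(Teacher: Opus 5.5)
Your proof is correct and follows the paper's own argument: you restrict a finite $\mathcal P(P)$-resolution of the extended-by-zero simple $S_q$ along the lower ideal and use $j^*P_p^P\cong P_p^Q$ for $p\in Q$ and $j^*P_p^P=0$ for $p\notin Q$. You then invoke Lemma~\ref{lem:right-serre-model} to pass from vertex simples to all finite-support objects. The only blemish is that writing $Q^r$ for the resolution terms and $Q_p$ for representables clashes with the poset name $Q$.
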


\begin{proof}
The lower ideal $Q$ is inward finite.  By
Lemma~\ref{lem:right-serre-model}, it is enough to resolve the vertex simples.
Fix $q\in Q$ and regard $S_q$ as a representation of $P$ by extension by
zero.  Choose a finite resolution of $S_q$ in $\Rep(P)$ by objects of
$\mathcal P(P)$ and restrict it to $Q$.  Restriction is exact, and for every
$p\in P$ one has
\[
j^*P_p^P\cong
\begin{cases}
P_p^Q,&p\in Q,\\
0,&p\notin Q.
\end{cases}
\]
Indeed, if $p\notin Q$ and $p\le q'\in Q$, the lower-ideal property would
force $p\in Q$.  Hence the restricted complex is a finite resolution of
$S_q$ by objects of $\mathcal P(Q)$, and the assertion follows from
Lemma~\ref{lem:right-serre-model}.
\end{proof}

Assume now that $P$ is right-Serre finite.  Under
\eqref{eq:bounded-projective-model}, define
\[
\mathbb S_P^R:
D^b(\Rep_{\mathrm{fs}}(P))
\longrightarrow
D^b(\Rep_{\mathrm{fs}}(P))
\]
by applying $\nu_P^0$ term by term: if $Q^\bullet\in
K^b_{\mathrm{fs}}(\mathcal P(P))$ represents $X$, then
\begin{equation}\label{eq:right-serre-definition}
\mathbb S_P^R(X):=\nu_P^0(Q^\bullet).
\end{equation}
The right-hand side is a bounded complex of objects of $\mathcal I(P)$, so it
lies in $D^b(\Rep_{\mathrm{fs}}(P))$.

\begin{proposition}[Existence of a right Serre functor]
\label{prop:right-serre-existence}
If $P$ is right-Serre finite, then $\mathbb S_P^R$ defined by
\eqref{eq:right-serre-definition} is a right Serre functor: for
$X,Y\in D^b(\Rep_{\mathrm{fs}}(P))$ there are bifunctorial isomorphisms
\[
D\Hom(X,Y)\cong\Hom(Y,\mathbb S_P^R X).
\]
If $P$ is finite, then $\mathcal P(P)=\operatorname{proj}\Rep(P)$, and
the finite global dimension of the incidence algebra of $P$ yields, via
bounded projective resolutions, an equivalence
\[
D^b(\Rep(P))\simeq K^b(\mathcal P(P)).
\]
Moreover, $\mathbb S_P^R$ agrees with the derived Nakayama functor, which is
the Serre autoequivalence of $D^b(\Rep(P))$.
\end{proposition}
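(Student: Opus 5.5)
The plan is to reduce the Serre duality isomorphism to the basic adjunction formula for a single pair $(P_p, V)$, and then extend it to bounded complexes. By Lemma~\ref{lem:right-serre-model}, I represent $X$ by a complex $Q^\bullet\in K^b_{\mathrm{fs}}(\mathcal P(P))$. I compute $\Hom(X,Y)$ in $D^b(\Rep(P))$, which is legitimate because $D^b(\Rep_{\mathrm{fs}}(P))\to D^b(\Rep(P))$ is fully faithful. Since $Q^\bullet$ is a bounded complex of finitely generated projectives, it is $K$-projective, so $\Hom(X,Y[m])=H^m\Hom^\bullet(Q^\bullet,Y)$ for any complex $Y$ representing the target.

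On the other side, $\nu_P^0(Q^\bullet)$ is a bounded complex of objects of $\mathcal I(P)$, and these objects are injective in $\Rep(P)$ (noted above). I choose $Y$ to be a bounded complex in $\Rep_{\mathrm{fs}}(P)$. A bounded complex of injectives is $K$-injective, so $\Hom(Y,\mathbb S_P^RX)=H^0\Hom^\bullet(Y,\nu_P^0Q^\bullet)$.

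The key pointwise input is a natural isomorphism, for $Q\in\mathcal P(P)$ and $V\in\Rep_{\mathrm{fs}}(P)$,
\[
D\Hom_{\Rep(P)}(Q,V)\cong\Hom_{\Rep(P)}(V,\nu_P^0Q).
\]
To obtain it, I reduce by additivity to $Q=P_p$. There the left side is $D(V_p)$ by Yoneda, and the right side is $\Hom(V,I_p)\cong D(V_p)$ by the dual Yoneda isomorphism recorded above. I then check naturality in $Q$, meaning compatibility with maps $P_p\to P_{p'}$, which come from $\Bbbk[P(p',p)]$. Both sides are induced by the same structure maps of $V$, and the identification $\nu_P^0(P_p)\cong I_p$ was built naturally from Yoneda, so this is a direct check. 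Finite dimensionality of $\Hom(Q,V)$ is needed here for the duality to behave; it holds because $V$ has finite support and finite-dimensional stalks.

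Next I assemble the complexes. Both $\Hom^\bullet(Q^\bullet,Y)$ and $\Hom^\bullet(Y,\nu_P^0Q^\bullet)$ are finite-dimensional in each degree, because all complexes are bounded. The pointwise isomorphism then yields an isomorphism of total complexes $D\Hom^\bullet(Q^\bullet,Y)\cong\Hom^\bullet(Y,\nu_P^0Q^\bullet)$, up to the sign and degree conventions fixed in Section~\ref{sec:homological}. Since $D$ is exact, taking $H^0$ gives $D\Hom(X,Y)\cong\Hom(Y,\mathbb S_P^RX)$. Bifunctoriality follows because morphisms in $D^b$ between such models are homotopy classes of chain maps: for the $X$ variable via $K^b_{\mathrm{fs}}(\mathcal P(P))$, and for the $Y$ variable by replacing $Y$ with an injective model when needed. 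The same argument also shows that $\mathbb S_P^R$ is well defined as a functor, since $\nu_P^0$ preserves homotopies.

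For finite $P$, the incidence algebra has finite global dimension. Every object therefore has a finite projective resolution, so $D^b(\Rep(P))\simeq K^b(\operatorname{proj})$, and $\mathcal P(P)=\operatorname{proj}\Rep(P)$ because the $P_p$ are the indecomposable projectives. Under the identification $A_P\text{-mod}\simeq\Rep(P)$, I check that $\nu_P^0$ agrees with $D\Hom_{A_P}(-,A_P)$ on projectives, using $A_P\cong\bigoplus_xP_x$ and the Yoneda description of its $x$-component. Hence $\mathbb S_P^R$ is the derived Nakayama functor, and by Happel's classical result this is the Serre autoequivalence.

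I expect the main obstacle to be the careful verification of naturality in the pointwise isomorphism together with the sign bookkeeping when totalizing. The rest is formal.
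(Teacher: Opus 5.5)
Your proposal is correct and follows essentially the same route as the paper: the Yoneda and dual-Yoneda isomorphism $D\Hom(Q,Y)\cong\Hom(Y,\nu_P^0Q)$ on $\mathcal P(P)$ is assembled over a bounded projective model $Q^\bullet$ of $X$, and $K$-projectivity of $Q^\bullet$ and $K$-injectivity of $\nu_P^0(Q^\bullet)$ let you pass to $H^0$; the finite case is then reduced to the standard derived Nakayama (Happel) Serre functor. One small correction: $\Hom(P_p,V)\cong V_p$ is finite-dimensional for every $V\in\Rep(P)$, so finite support is not needed there; what it actually does, via inward finiteness of the $I_p$, is keep $\nu_P^0(Q^\bullet)$ inside $D^b(\Rep_{\mathrm{fs}}(P))$.
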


\begin{proof}
For $Q\in\mathcal P(P)$ and $Y\in\Rep_{\mathrm{fs}}(P)$, Yoneda and
finite-dimensional duality give, first for $Q=P_p$ and then by additivity,
\[
D\Hom_{\Rep(P)}(Q,Y)
\cong
\Hom_{\Rep(P)}(Y,\nu_P^0Q).
\]
For a bounded projective complex $Q^\bullet$ representing $X$, these
isomorphisms are compatible with the differentials and therefore give the
corresponding duality of Hom-complexes.  Since $Q^\bullet$ is $K$-projective
and $\nu_P^0(Q^\bullet)$ is a bounded complex of injectives, taking
zeroth cohomology yields
\[
D\Hom_{D^b(\Rep_{\mathrm{fs}}(P))}(X,Y)
\cong
\Hom_{D^b(\Rep_{\mathrm{fs}}(P))}(Y,\mathbb S_P^R X).
\]
Naturality is inherited from \eqref{eq:projective-nakayama}.  If $P$ is
finite, every $P_p$ has finite support and the incidence algebra of $P$ has
finite global dimension, so the final statement is the standard
projective-to-injective description of the derived Nakayama functor, which in
this finite case is the Serre functor.
\end{proof}

\begin{remark}\label{rem:inward-not-serre}
Inward finiteness and right-Serre finiteness control different sides of the
homological picture.  The former makes the $I_p$ finite-support injectives;
the latter supplies bounded resolutions by the generally non-finite-support
projectives $P_p$.  Inward finiteness alone does not imply right-Serre
finiteness.  For example, let
\[
P=\{0,a_1,a_2,\ldots\},
\qquad
0<a_i\quad(i\ge1),
\]
with the $a_i$ pairwise incomparable.  Then $P$ is inward finite, but the
kernel of $P_0\twoheadrightarrow S_0$ is
$\bigoplus_{i\ge1}P_{a_i}$, which is not finitely generated.  Thus $S_0$
is not finitely presented: indeed, if it admitted a finite presentation by
objects of $\mathcal P(P)$, Schanuel's lemma would imply that the kernel of
the displayed finite projective cover is finitely generated, a contradiction.
In particular, $S_0$ has no finite resolution by objects of
$\mathcal P(P)$, and $P$ is not right-Serre finite.
\end{remark}

\begin{theorem}[Right-Serre exchange]\label{thm:right-serre-exchange}
Let $f:Q\hookrightarrow P$ be a full inclusion of right-Serre finite posets.
Assume that the derived Kan extensions restrict to adjoints
\[
\mathbf L f_!,\mathbf R f_*:
D^b(\Rep_{\mathrm{fs}}(Q))\longrightarrow
D^b(\Rep_{\mathrm{fs}}(P))
\]
of the exact restriction $f^*$.  Then there is a canonical natural
isomorphism
\begin{equation}\label{eq:general-right-serre-exchange}
\boxed{\mathbb S_P^R\circ\mathbf L f_!
\simeq\mathbf R f_*\circ\mathbb S_Q^R.}
\end{equation}
\end{theorem}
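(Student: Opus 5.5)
We prove \eqref{eq:general-right-serre-exchange} by a Yoneda argument in $D^b(\Rep_{\mathrm{fs}}(P))$, using only the right Serre duality of Proposition~\ref{prop:right-serre-existence} on $P$ and on $Q$ and the two assumed adjunctions $\mathbf L f_!\dashv f^*\dashv\mathbf R f_*$. No explicit resolutions are needed. Fix $X\in D^b(\Rep_{\mathrm{fs}}(Q))$ and $Y\in D^b(\Rep_{\mathrm{fs}}(P))$. We write down the chain
\begin{align*}
\Hom_P\bigl(Y,\mathbb S_P^R\mathbf L f_!X\bigr)
&\cong D\Hom_P\bigl(\mathbf L f_!X,Y\bigr)
\cong D\Hom_Q\bigl(X,f^*Y\bigr)\\
&\cong \Hom_Q\bigl(f^*Y,\mathbb S_Q^R X\bigr)
\cong \Hom_P\bigl(Y,\mathbf R f_*\mathbb S_Q^R X\bigr).
\end{align*}
The first and third isomorphisms are right Serre duality on $P$ and on $Q$. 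The second is $\mathbf L f_!\dashv f^*$ with $D$ applied. The fourth is $f^*\dashv\mathbf R f_*$.

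Three points have to be checked for each term to make sense.

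First, $f^*$ must send $D^b(\Rep_{\mathrm{fs}}(P))$ into $D^b(\Rep_{\mathrm{fs}}(Q))$. This holds because restriction is exact and cannot enlarge support.

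Second, $\mathbb S_Q^R X$ must lie in $D^b(\Rep_{\mathrm{fs}}(Q))$, so that $\mathbf R f_*$ may be applied to it. This is built into the definition \eqref{eq:right-serre-definition}, since $\mathcal I(Q)\subseteq\Rep_{\mathrm{fs}}(Q)$ by inward finiteness.

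Third, each isomorphism in the chain must be natural in both $X$ and $Y$. For the Serre dualities this is the bifunctoriality stated in Proposition~\ref{prop:right-serre-existence}. For the adjunctions it is standard.

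Once naturality in $Y$ is in place, the Yoneda lemma in the triangulated category $D^b(\Rep_{\mathrm{fs}}(P))$ produces a unique isomorphism $\mathbb S_P^R\mathbf L f_!X\cong\mathbf R f_*\mathbb S_Q^RX$ representing the composite. Naturality in $X$ then shows that these isomorphisms assemble into a natural isomorphism of functors. This is the canonical isomorphism claimed.

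The step needing the most care is the bifunctoriality of the right Serre isomorphism, rather than the formal chain. In Proposition~\ref{prop:right-serre-existence}, $\mathbb S_P^R$ is defined on a chosen bounded projective model $Q^\bullet$ via Lemma~\ref{lem:right-serre-model}. We therefore confirm that $\nu_P^0$ applied to a morphism in $K^b_{\mathrm{fs}}(\mathcal P(P))$ gives a morphism compatible with the duality. The pairing
\[
D\Hom(Q^\bullet,Y)\cong\Hom(Y,\nu_P^0Q^\bullet)
\]
is induced termwise from the Yoneda isomorphism $D\Hom(P_p,Y)\cong\Hom(Y,I_p)$, which is natural in both $P_p$ and $Y$. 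So naturality reduces to the additive case on $\mathcal P(P)$, and passes to homotopy classes because both sides are computed by Hom-complexes.

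With this in hand, $\mathbb S_P^R$ is a genuine triangle functor, well defined up to canonical isomorphism. The isomorphism obtained from Yoneda is then independent of the choices of resolutions. The assumption that the derived Kan extensions restrict to the finite-support bounded categories is used only to ensure that $\mathbf L f_!X$ and $\mathbf R f_*\mathbb S_Q^RX$ are objects of the category where the Serre duality is available.
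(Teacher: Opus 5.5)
Your proposal is correct and matches the paper's proof: the same four-step chain (right Serre duality on $P$, the adjunction $\mathbf L f_!\dashv f^*$, right Serre duality on $Q$, then $f^*\dashv\mathbf R f_*$), followed by the Yoneda lemma in $D^b(\Rep_{\mathrm{fs}}(P))$. The paper leaves implicit where each term lives and why the Serre isomorphism is bifunctorial; you check both, and correctly.
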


\begin{proof}
For $X\in D^b(\Rep_{\mathrm{fs}}(Q))$ and
$Y\in D^b(\Rep_{\mathrm{fs}}(P))$, right Serre duality and the derived
adjunctions give
\begin{align*}
\Hom(Y,(\mathbb S_P^R\circ\mathbf L f_!)(X))
&\cong D\Hom(\mathbf L f_!X,Y)\\
&\cong D\Hom(X,f^*Y)\\
&\cong\Hom(f^*Y,\mathbb S_Q^R X)\\
&\cong\Hom(Y,(\mathbf R f_*\circ\mathbb S_Q^R)(X)).
\end{align*}
Yoneda gives \eqref{eq:general-right-serre-exchange}.
\end{proof}

\section{Application I: The Finite-Box Case}
\label{sec:finite-box-recollement}

We now apply the general theory of Section~\ref{sec:general-fs-theory} to the
finite-box categories $\mathcal H_i(\mathbf t)$.  Since their representation
models $U_q(\mathbf t)$ are finite, both Kan extensions preserve finite support,
so the general theory already provides the relevant recollements and Serre
exchange.  The finite-box setting, however, allows us to go substantially
further.  We compute the horizontal Kan extensions and recollement functors
explicitly, construct a functorial rank-layer resolution comparing the two
sections $j_!$ and $j_*$, and use Nakayama--Serre duality to transform this
resolution into a costandard rank complex.  Thus this section is not merely a
specialization of Section~\ref{sec:general-fs-theory}; the finiteness of the box
makes possible the explicit homological constructions developed below.

\subsection{The two-parameter Serre stratification}

The finite-box categories vary in two independent directions: the
cohomological index and the determining box.  By
Proposition~\ref{prop:negative-box} and
Theorem~\ref{thm:finite-intrinsic}, under the global equivalence
$\rho:\mathcal A^-\xrightarrow{\sim}\Rep_{\mathrm{fs}}(\mathbb N^n)$,
$\mathcal H_i(\mathbf t)$, for $0\le i<n$, is the Serre subcategory supported
on $U_{n-i}(\mathbf t)$.  This common realization gives the following
two-parameter description.

\begin{theorem}[Two-parameter Serre stratification]\label{thm:two-parameter}
Let $0\le i,j<n$ and $\mathbf s,\mathbf t\in\mathbb N^n$, and put
$p=n-i$ and $q=n-j$.
\begin{enumerate}[label=\rm(\arabic*)]
\item If $i\le j$ and $\mathbf s\le\mathbf t$, then
$\mathcal H_i(\mathbf s)\subseteq\mathcal H_j(\mathbf t)$ is a Serre
inclusion.
\item Writing
$(\mathbf s\wedge\mathbf t)_r=\min\{s_r,t_r\}$, one has
\[
\mathcal H_i(\mathbf s)\cap\mathcal H_j(\mathbf t)
=\mathcal H_{\min\{i,j\}}(\mathbf s\wedge\mathbf t).
\]
\item If $i\le j$ and $\mathbf s\le\mathbf t$, then
\[
\mathcal H_j(\mathbf t)/\mathcal H_i(\mathbf s)
\simeq
\Rep\bigl(U_q(\mathbf t)\setminus U_p(\mathbf s)\bigr),
\]
where the complement has the order induced from $U_q(\mathbf t)$.
Explicitly, its elements are the $\mathbf a\le\mathbf t$ such that
$\ell(\mathbf a)\ge q$ and either $\mathbf a\nleq\mathbf s$ or
$\ell(\mathbf a)<p$.
\end{enumerate}
\end{theorem}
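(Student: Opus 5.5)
Everything reduces to support conditions inside the single ambient category $\Rep_{\mathrm{fs}}(\mathbb N^n)$ via $\rho$. By Proposition~\ref{prop:negative-box}, Proposition~\ref{prop:global-fs} and Theorem~\ref{thm:finite-intrinsic}, an object $N\in\mathcal A^-$ lies in $\mathcal H_i(\mathbf s)$ if and only if $\operatorname{supp}\rho(N)\subseteq U_p(\mathbf s)$. This uses that the equivalences $\rho_{\mathbf s}$ are compatible with $\rho$ after extension by zero from the box. The first step is to check this compatibility carefully: a module in $\mathcal A^-_{\mathbf s}$ is also negatively $\mathbf t$-determined for $\mathbf t\ge\mathbf s$, and its nonnegative restriction is supported in $P_{\mathbf s}$. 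Conversely, a finite-support representation supported in $P_{\mathbf s}$ comes from $\mathcal A^-_{\mathbf s}$ via $\sigma_{\mathbf s}$. After this, every category in the statement is $\Rep_{\mathrm{fs}}(X)$ for an explicit finite subset $X\subseteq\mathbb N^n$, namely the full subcategory of representations supported on $X$.

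For (1), note that $i\le j$ gives $p\ge q$, and $\mathbf s\le\mathbf t$ gives $U_p(\mathbf s)\subseteq U_q(\mathbf t)$. So the inclusion of categories holds. Subobjects, quotients and extensions in $\mathcal H_j(\mathbf t)\simeq\Rep(U_q(\mathbf t))$ preserve the vertexwise vanishing condition by exactness of evaluation, so the inclusion is Serre. For (2), observe that $U_p(\mathbf s)\cap U_q(\mathbf t)=\{\mathbf a\le\mathbf s\wedge\mathbf t : \ell(\mathbf a)\ge\max\{p,q\}\}$, and $\max\{p,q\}=n-\min\{i,j\}$. The right-hand side is therefore $U_{n-\min\{i,j\}}(\mathbf s\wedge\mathbf t)$. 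Since a representation is supported on both sets if and only if it is supported on their intersection, the identity of full subcategories follows.

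For (3), I would apply Corollary~\ref{cor:order-ideal-consequences} with $P=U_q(\mathbf t)$, which is finite and hence inward finite. The main point to verify is which of $U_p(\mathbf s)$ and its complement is an order ideal of $U_q(\mathbf t)$. I claim $U_p(\mathbf s)$ is a lower order ideal there, hmm, but that fails: if $\mathbf a\in U_p(\mathbf s)$ and $\mathbf b\le\mathbf a$ with $\mathbf b\in U_q(\mathbf t)$, then $\mathbf b\le\mathbf s$ but $\ell(\mathbf b)$ may drop below $p$. Also $U_p(\mathbf s)$ is not an upper ideal, since $\mathbf b\ge\mathbf a$ may leave the box $[\mathbf 0,\mathbf s]$. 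So $U_p(\mathbf s)$ is only convex in $U_q(\mathbf t)$, being the intersection of an upper set $\{\ell\ge p\}$ and a lower set $\{\le\mathbf s\}$. This is the main obstacle.

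I would handle it in two stages. First take the Gabriel quotient by the Serre subcategory $\Rep(U_q(\mathbf t)\cap\{\ell\ge p\})$, which is supported on an upper order ideal. By the corollary, this quotient is $\Rep$ of the lower ideal $\{\mathbf a\in U_q(\mathbf t): \ell(\mathbf a)<p\}$. However, the image of $\Rep(U_p(\mathbf s))$ in that quotient is then zero, so this route does not directly give the complement. I would instead argue directly with the general fact behind Proposition~\ref{prop:recollement-ttf}(2): for a finite poset $P$ and any subset $C$, the Serre subcategory of representations supported on $C$ has quotient equivalent to $\Rep$ of $P\setminus C$ under the induced order, provided $P\setminus C$ is handled via restriction $j^*$ with a fully faithful adjoint. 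Restriction to $P\setminus C$ is exact with kernel exactly the $C$-supported representations, and its Kan extensions $j_!,j_*$ are fully faithful sections because the inclusion is full. Gabriel's localization theorem, cited after Proposition~\ref{prop:recollement-ttf}, then gives $\Rep(P)/\Rep(C)\simeq\Rep(P\setminus C)$. Finally, I would unwind the complement explicitly. We have $\mathbf a\le\mathbf t$ and $\ell(\mathbf a)\ge q$, and $\mathbf a\notin U_p(\mathbf s)$ means $\mathbf a\nleq\mathbf s$ or $\ell(\mathbf a)<p$, which is the stated description.
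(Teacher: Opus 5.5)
Your proposal is correct and follows essentially the same route as the paper: parts (1) and (2) are read off as support conditions inside $\Rep_{\mathrm{fs}}(\mathbb N^n)$, and your explicit check that $\mathcal A^-_{\mathbf s}$ corresponds exactly to the representations supported in $P_{\mathbf s}$ spells out what the paper calls the ``common support realization.'' For (3) you do what the paper does: note that $U_p(\mathbf s)$ is only convex in $U_q(\mathbf t)$, and apply Gabriel localization to the exact restriction onto the complementary full subposet, whose Kan extension is a fully faithful adjoint; the abandoned order-ideal detour is not needed.
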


\begin{proof}
If $i\le j$ and $\mathbf s\le\mathbf t$, then $p\ge q$ and
$U_p(\mathbf s)\subseteq U_q(\mathbf t)$.  This subposet is convex: if
$\mathbf a,\mathbf c\in U_p(\mathbf s)$ and
$\mathbf a\le\mathbf b\le\mathbf c$, then
$\mathbf b\le\mathbf s$ and $\ell(\mathbf b)\ge\ell(\mathbf a)\ge p$.
Thus representations supported on $U_p(\mathbf s)$ form a Serre subcategory
of $\Rep(U_q(\mathbf t))$.  Restriction to the complementary full subposet is
exact, has this Serre subcategory as its kernel, and has a fully faithful
right Kan extension.  Gabriel localization therefore proves (1) and (3)
\cite{Gabriel}.

For (2), the common support realization gives
\[
U_p(\mathbf s)\cap U_q(\mathbf t)
=U_{\max\{p,q\}}(\mathbf s\wedge\mathbf t),
\]
which corresponds to
$\mathcal H_{\min\{i,j\}}(\mathbf s\wedge\mathbf t)$.
\end{proof}

On a finite poset the terms ``standard'' and ``costandard'' below are
descriptive: they mean objects obtained by left and right Kan extension,
respectively, and do not assert a highest-weight structure.

\subsection{The horizontal recollement}\label{subsec:horizontal-recollement}

We now study the order-ideal decomposition underlying two adjacent
cohomological layers.  Its recollement both recovers the horizontal quotient
and records how that quotient sits inside $\Rep(U_q(\mathbf t))$.  Since
$U_q(\mathbf t)$ is finite and
$U_q(\mathbf t)=U_{q+1}(\mathbf t)\sqcup L_q(\mathbf t)$ is an upper/lower
order-ideal decomposition, Proposition~\ref{prop:fs-order-recollement}
applies; the point of the results below is to compute the two horizontal
sections explicitly.

For the explicit Kan-extension calculations in the following lemma and
proposition, fix $1\le i\le n-1$ and put $q=n-i$.  For $q\le r\le n$ set
\[
L_r(\mathbf t)=\{\mathbf a\in P_{\mathbf t}\mid\ell(\mathbf a)=r\},
\qquad
j_r:L_r(\mathbf t)\hookrightarrow U_q(\mathbf t),
\]
and write $j=j_q$.  We also write
\[
\kappa:U_{q+1}(\mathbf t)\hookrightarrow U_q(\mathbf t)
\]
for the inclusion of the upper order ideal.  We use the same symbols
$j_{r,!}$ and $j_{r,*}$ for the left and right Kan extensions on
representation categories; their pointwise formulas were recalled above.
Since $U_{q+1}(\mathbf t)$ is an upper order ideal, the left Kan extension
$\kappa_!$ is extension by zero.  In the recollement notation below we write
$\iota_*:=\kappa_!$ for this embedding, and denote its left and right
adjoints by $\iota^*$ and $\iota^!$, respectively.  Thus the symbol
$\iota_*$ is recollement notation and does not denote the right Kan
extension $\kappa_*$.

For $\mathbf a\in P_{\mathbf t}$ and $F\subseteq\operatorname{supp}(\mathbf a)$, define $\mathbf a_F\in \mathbb N^n$ by
\[
(\mathbf a_F)_k=
\begin{cases}
a_k,&k\in F,\\
0,&k\notin F.
\end{cases}
\]

\begin{lemma}\label{lem:stratum-kan-exact}
For every $q\le r\le n$, both Kan extension functors
\[
j_{r,!},j_{r,*}:\Rep(L_r(\mathbf t))\longrightarrow\Rep(U_q(\mathbf t))
\]
are exact.
\end{lemma}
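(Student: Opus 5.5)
The plan is to show that at each vertex $\mathbf b\in U_q(\mathbf t)$ the indexing diagrams of the pointwise Kan formulas are discrete. Then the colimit and the limit are finite direct sums, hence exact in $V$. Colimits and limits of representations are computed vertexwise, so exactness of $j_{r,!}$ and $j_{r,*}$ reduces to exactness of the functors $V\mapsto(j_{r,!}V)_{\mathbf b}$ and $V\mapsto(j_{r,*}V)_{\mathbf b}$ for each $\mathbf b$.

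\textbf{Key observation: $L_r(\mathbf t)$ is a disjoint union of products of chains.} For $F\subseteq[n]$ with $|F|=r$, the elements $\mathbf a\in L_r(\mathbf t)$ with $\operatorname{supp}(\mathbf a)=F$ form a block. Two elements with different supports but the same support rank are incomparable: if $\mathbf a\le\mathbf a'$ then $\operatorname{supp}(\mathbf a)\subseteq\operatorname{supp}(\mathbf a')$, and equal cardinality forces equality. Hence $\Rep(L_r(\mathbf t))\simeq\prod_{|F|=r}\Rep(\text{block}_F)$. Each block is the box $\prod_{k\in F}[1,t_k]$, which is a lattice.

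\textbf{Left Kan extension.} Fix $\mathbf b\in U_q(\mathbf t)$ and $F\subseteq\operatorname{supp}(\mathbf b)$ with $|F|=r$. The elements of block $F$ lying below $\mathbf b$ are the $\mathbf a$ with $\operatorname{supp}(\mathbf a)=F$ and $\mathbf a\le\mathbf b_F$. This set has the maximum $\mathbf b_F$, which lies in $L_r(\mathbf t)$ because $b_k>0$ for $k\in F$. If $\operatorname{supp}(\mathbf b)$ has fewer than $r$ elements, nothing lies below $\mathbf b$. Therefore
\[
(j_{r,!}V)_{\mathbf b}\cong\bigoplus_{F\subseteq\operatorname{supp}(\mathbf b),\,|F|=r}V_{\mathbf b_F},
\]
a finite direct sum of evaluations, which is exact in $V$.

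\textbf{Right Kan extension.} The elements of $L_r(\mathbf t)$ above $\mathbf b$ must have support containing $\operatorname{supp}(\mathbf b)$ and of size $r$. So we need $\ell(\mathbf b)\le r$, and we index by $F\supseteq\operatorname{supp}(\mathbf b)$ with $|F|=r$. Within block $F$, the set of $\mathbf a\ge\mathbf b$ with $\mathbf a\le\mathbf t$ has a minimum $\mathbf b^F$, defined by $b^F_k=\max\{b_k,1\}$ for $k\in F$ and $b^F_k=0$ otherwise. This element lies in $P_{\mathbf t}$ provided $t_k\ge1$ for all $k\in F$; if some $t_k=0$ with $k\in F$, the block is empty. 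Therefore
\[
(j_{r,*}V)_{\mathbf b}\cong\bigoplus_{F}V_{\mathbf b^F},
\]
a finite product, again exact.

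\textbf{Expected obstacle.} The only care needed is to verify that, within a fixed block, the comma category is connected with a terminal or initial object. Then the (co)limit over a disjoint union of such components is the direct sum of the values at those objects, and incomparability across blocks ensures the components really are separated. This is routine once the block decomposition is in place.
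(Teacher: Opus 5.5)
Your proof is correct and follows essentially the same route as the paper: split the pointwise indexing posets by support, observe that each support component has the terminal object $\mathbf b_F$ (for $j_{r,!}$) or the initial object $\mathbf b^F$ (for $j_{r,*}$), and conclude that each vertex value is a finite direct sum of vertex evaluations, hence exact. Your opening word ``discrete'' is inaccurate, since the diagrams are disjoint unions of components with terminal or initial objects, as you say correctly later; and your treatment of blocks with some $t_k=0$ is slightly more careful than the paper's.
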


\begin{proof}
Fix $\mathbf a\in U_q(\mathbf t)$ and put
$A=\operatorname{supp}(\mathbf a)$.  By the pointwise formula for left Kan
extension,
\[
(j_{r,!}W)_{\mathbf a}
=
\operatorname*{colim}_{\substack{\mathbf b\in L_r(\mathbf t)\\
\mathbf b\le\mathbf a}} W_{\mathbf b}.
\]
The indexing poset is the disjoint union of the subposets consisting of those
$\mathbf b$ with fixed support
$B=\operatorname{supp}(\mathbf b)\subseteq A$, $|B|=r$.  For each such $B$,
the corresponding subposet has terminal object $\mathbf a_B$, since every
$\mathbf b\le\mathbf a$ with $\operatorname{supp}(\mathbf b)=B$ satisfies
$\mathbf b\le\mathbf a_B$.  Hence
\[
(j_{r,!}W)_{\mathbf a}
\cong
\bigoplus_{\substack{B\subseteq A\\|B|=r}} W_{\mathbf a_B},
\]
so $(j_{r,!}W)_{\mathbf a}$ is a finite direct sum of vertex evaluations of
$W$.

For the right Kan extension, the pointwise formula gives
\[
(j_{r,*}W)_{\mathbf a}
=
\operatorname*{lim}_{\substack{\mathbf b\in L_r(\mathbf t)\\
\mathbf a\le\mathbf b}} W_{\mathbf b}.
\]
If $|A|>r$, the indexing poset is empty.  If $|A|\le r$, it is the disjoint
union of the subposets with fixed support $B\supseteq A$, $|B|=r$.  The
subposet indexed by $B$ has an initial object, obtained from $\mathbf a$ by
putting the coordinates in $B\setminus A$ equal to $1$.  Thus
$(j_{r,*}W)_{\mathbf a}$ is a finite product of vertex evaluations of $W$.
Finite direct sums and finite products of finite-dimensional $\Bbbk$-vector spaces
are exact, so both Kan extensions are exact.
\end{proof}

\begin{proposition}\label{prop:horizontal-embeddings}
Restriction along $j$ is the quotient functor in the abelian recollement
\[
\begin{tikzcd}[column sep=4.8em]
\Rep(U_{q+1}(\mathbf t))
  \arrow[r, "\iota_*" description]
& \Rep(U_q(\mathbf t))
  \arrow[l, bend right=20, "\iota^*"']
  \arrow[l, bend left=20, "\iota^!"]
  \arrow[r, "j^*" description]
& \Rep(L_q(\mathbf t))
  \arrow[l, bend left=20, "j_!"']
  \arrow[l, bend right=20, "j_*"]
\end{tikzcd}
\]
Here $U_{q+1}(\mathbf t)$ is an upper order ideal of $U_q(\mathbf t)$, and
$\iota_*=\kappa_!$ is extension by zero.  Its right adjoint is restriction,
\[
(\iota^!M)_{\mathbf a}=M_{\mathbf a}
\qquad(\mathbf a\in U_{q+1}(\mathbf t)),
\]
whereas its left adjoint is given pointwise by
\[
(\iota^*M)_{\mathbf a}
=
M_{\mathbf a}\Big/
\sum_{\substack{\mathbf b\in L_q(\mathbf t)\\\mathbf b\le\mathbf a}}
\operatorname{Im}\bigl(M_{\mathbf b}\longrightarrow M_{\mathbf a}\bigr),
\qquad \mathbf a\in U_{q+1}(\mathbf t).
\]
The two adjoint sections $j_!$ and $j_*$ are exact and fully faithful.  Explicitly, if $V\in\Rep(L_q(\mathbf t))$, $\mathbf a\in U_q(\mathbf t)$, and $A=\operatorname{supp}(\mathbf a)$, then
\[
(j_!V)_{\mathbf a}
=
\bigoplus_{\substack{F\subseteq A\\|F|=q}}V_{\mathbf a_F},
\qquad
(j_*V)_{\mathbf a}
=
\begin{cases}
V_{\mathbf a},&|A|=q,\\
0,&|A|>q.
\end{cases}
\]
There is a canonical natural epimorphism
\[
\epsilon_V:j_!V\longrightarrow j_*V
\]
which is the identity on $L_q(\mathbf t)$ and zero on $U_{q+1}(\mathbf t)$.
The associated TTF triple contains the hereditary torsion pair
\[
\bigl(\operatorname{Im}\iota_*,\operatorname{Im}j_*\bigr)
=
\bigl(\Rep(U_{q+1}(\mathbf t)),j_*\Rep(L_q(\mathbf t))\bigr)
\quad\text{in }\Rep(U_q(\mathbf t)),
\]
where both side categories in the second expression are identified with their
extension-by-zero images.
\end{proposition}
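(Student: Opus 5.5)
The plan is to obtain the recollement from Proposition~\ref{prop:fs-order-recollement}, applied in the order-dual orientation that is available because $U_q(\mathbf t)$ is finite (Remark~\ref{rem:finite-complementary-recollements}). Inside $P=U_q(\mathbf t)$, the set $U_{q+1}(\mathbf t)$ is an upper order ideal, since support rank is monotone, and $L_q(\mathbf t)$ is the complementary lower order ideal. In the orientation we need, the roles are swapped. Extension by zero $\iota_*=\kappa_!$ from the upper ideal has the exact restriction $\iota^!$ as right adjoint, because a morphism $M\to\kappa_!N$ vanishes off $U_{q+1}(\mathbf t)$ and is determined there. Its left adjoint $\iota^*$ is the largest quotient supported on $U_{q+1}(\mathbf t)$. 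That quotient is obtained by dividing each $M_{\mathbf a}$ by the images of the $M_{\mathbf b}$ with $\mathbf b\in L_q(\mathbf t)$, $\mathbf b\le\mathbf a$, and this is exactly the displayed formula. The quotient side is restriction $j^*$ to $L_q(\mathbf t)$ with its Kan adjoints $j_!\dashv j^*\dashv j_*$; these are fully faithful because $j$ is a full inclusion. Finally, $\ker j^*=\operatorname{Im}\iota_*$ holds because both are the representations supported on $U_{q+1}(\mathbf t)$.

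The explicit formulas come from the proof of Lemma~\ref{lem:stratum-kan-exact} with $r=q$, which also gives exactness. For $j_!$, that computation yields $(j_!V)_{\mathbf a}\cong\bigoplus_{F\subseteq A,|F|=q}V_{\mathbf a_F}$. For $j_*$, there are two cases. If $|A|>q$, the indexing poset $\{\mathbf b\in L_q(\mathbf t):\mathbf a\le\mathbf b\}$ is empty, since $\mathbf a\le\mathbf b$ forces $\ell(\mathbf b)\ge|A|$. If $|A|=q$, the only admissible support is $B=A$, and $\mathbf a$ itself is initial, so the limit is $V_{\mathbf a}$. Along the way I will check that the structure maps are the evident ones: componentwise inclusions $V_{\mathbf a_F}\to V_{\mathbf c_F}$ for $j_!$, and the maps of $V$ on $L_q(\mathbf t)$ with zero elsewhere for $j_*$.

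The map $\epsilon_V$ will be defined as the composite $j_!V\to j_*j^*j_!V\cong j_*V$, using the unit of $j^*\dashv j_*$ and the isomorphism $j^*j_!\cong\mathrm{id}$ coming from full faithfulness. Equivalently, $\epsilon_V$ is the adjunct of $\mathrm{id}_V$ under $j_!\dashv j^*$ after identifying $j^*j_*V\cong V$. On $L_q(\mathbf t)$ it is the identity, since $(j_!V)_{\mathbf a}=V_{\mathbf a}$ there (only $F=A$ occurs). On $U_{q+1}(\mathbf t)$ it is zero because $j_*V$ vanishes there. Surjectivity is then vertexwise clear.

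The TTF statement is Proposition~\ref{prop:recollement-ttf}(1) applied to this recollement, which gives the torsion pair $(\operatorname{Im}\iota_*,\ker\iota^!)$. Since $\iota^!$ is exact restriction, part (4) shows that $\operatorname{Im}\iota_*$ is Serre, so the pair is hereditary. It remains to identify $\ker\iota^!$ with $\operatorname{Im}j_*$. The kernel $\ker\iota^!$ consists of the representations supported on $L_q(\mathbf t)$, and these are exactly the $j_*$-images: we have $j_*W=$ extension by zero of $W$ by the formula above, and $L_q(\mathbf t)$ is a lower ideal, so $j_*$ is extension by zero. The main obstacle I anticipate is bookkeeping of orientation. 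The general proposition is stated with the upper ideal as the quotient side, whereas here the upper ideal plays the subcategory role, so the naming of adjoints has to be carefully matched to the order-dual version.
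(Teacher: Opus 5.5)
Your proposal follows the paper's proof essentially step for step. The recollement comes from Proposition~\ref{prop:fs-order-recollement} in the order-dual orientation of Remark~\ref{rem:finite-complementary-recollements}, and the adjoints of $\iota_*$ are computed by hand. The sections $j_!,j_*$ come from the terminal/initial-object computation of Lemma~\ref{lem:stratum-kan-exact}. The torsion pair comes from Proposition~\ref{prop:recollement-ttf}(1) together with $\ker\iota^!=\operatorname{Im}j_*$. Defining $\epsilon_V$ via the unit of $j^*\dashv j_*$ is a fine, slightly more canonical substitute for the paper's componentwise definition, since naturality is then automatic.

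Two of your justifications need correcting, although the claims they support are true.

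First, heredity. Proposition~\ref{prop:recollement-ttf}(4), applied to the exact functor $\iota^!$, says that $\ker\iota^!$ is Serre. But $\ker\iota^!$ is the \emph{torsion-free} class, namely the representations supported on $L_q(\mathbf t)$. Heredity of $(\operatorname{Im}\iota_*,\ker\iota^!)$ instead requires the torsion class $\operatorname{Im}\iota_*$ to be closed under subobjects. The correct reason, which the paper uses, is that $\operatorname{Im}\iota_*=\ker j^*$ with $j^*$ exact; you had already recorded this identification.

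Second, the right adjoint of $\iota_*$. It is detected by morphisms $\kappa_!N\to M$. These are unconstrained across relations $\mathbf b\le\mathbf a$ with $\mathbf b\in L_q(\mathbf t)$, because $(\kappa_!N)_{\mathbf b}=0$, and so $\Hom(\kappa_!N,M)\cong\Hom(N,M|_{U_{q+1}(\mathbf t)})$. The morphisms $M\to\kappa_!N$ that you cite are the ones subject to the vanishing constraint, and they produce $\iota^*$, not $\iota^!$. Alternatively, simply invoke the Kan adjunction $\kappa_!\dashv\kappa^*$.

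With these two fixes the argument is complete.
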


\begin{proof}
For $\mathbf a\in U_q(\mathbf t)$, put $A=\operatorname{supp}(\mathbf a)$.  The indexing poset for the pointwise left Kan extension consists of the elements $\mathbf b\in L_q(\mathbf t)$ with $\mathbf b\le\mathbf a$.  It is the disjoint union of the subposets having fixed support $F\subseteq A$, $|F|=q$, and the subposet indexed by $F$ has terminal object $\mathbf a_F$.  Its colimit is therefore the displayed direct sum.

For the right Kan extension, the indexing poset consists of the elements $\mathbf b\in L_q(\mathbf t)$ satisfying $\mathbf a\le\mathbf b$.  If $|A|>q$, this category is empty.  If $|A|=q$, it has the initial object $\mathbf a$, so its limit is $V_{\mathbf a}$.  These formulas show directly that both Kan extensions are exact and that $j^*j_!\cong\mathrm{id}\cong j^*j_*$.  Hence both are fully faithful.

The existence of this complementary recollement follows from
Proposition~\ref{prop:fs-order-recollement} and
Remark~\ref{rem:finite-complementary-recollements}, since
$U_q(\mathbf t)$ is finite.  It remains only to identify the adjoints to
$\iota_*$.  A morphism
$\iota_*N\to M$ is uniquely determined by its restriction to
$U_{q+1}(\mathbf t)$, giving
$\iota^!M=M|_{U_{q+1}(\mathbf t)}$.  A morphism $M\to\iota_*N$ must vanish
on $L_q(\mathbf t)$, so naturality forces its component at
$\mathbf a\in U_{q+1}(\mathbf t)$ to annihilate every image of
$M_{\mathbf b}\to M_{\mathbf a}$ with $\mathbf b\in L_q(\mathbf t)$ and
$\mathbf b\le\mathbf a$.  This gives the displayed formula for $\iota^*$.
The componentwise description of $\epsilon_V$ proves the assertion about
$\epsilon_V$.  Proposition~\ref{prop:recollement-ttf} gives the torsion pair
$(\operatorname{Im}\iota_*,\ker\iota^!)$, and the formula for $\iota^!$
identifies $\ker\iota^!=\operatorname{Im}j_*$.  Moreover,
$\operatorname{Im}\iota_*=\ker j^*$ is a Serre subcategory because $j^*$ is
exact.  Hence the torsion pair is hereditary.
\end{proof}

For $r\ge0$, put
\[
D_r(\mathbf t)=\{\mathbf a\in P_{\mathbf t}\mid\ell(\mathbf a)\le r\}.
\]

\begin{corollary}[Finite-box quotient formulas]
\label{cor:finite-quotient-formulas}
Let $\mathbf t\in\mathbb N^n$, let $1\le i\le n-1$, and put $q=n-i$.
Then
\[
\mathcal H_i(\mathbf t)/\mathcal H_{i-1}(\mathbf t)
\simeq\Rep(L_q(\mathbf t)).
\]
If $\mathbf t\ge\mathbf1$, then
\[
\Rep(L_q(\mathbf t))
\simeq
\prod_{\substack{F\subseteq[n]\\|F|=q}}
\Rep\left(\prod_{j\in F}[0,t_j-1]\right).
\]
Moreover, for $0\le j<n$ and $p=n-j$,
\begin{equation}\label{eq:finite-complementary-quotient}
\mathcal A^-_{\mathbf t}/\mathcal H_j(\mathbf t)
\simeq\Rep(D_{p-1}(\mathbf t)).
\end{equation}
\end{corollary}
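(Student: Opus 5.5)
The plan is to read off all three statements from the recollements already constructed, transported through $\rho_{\mathbf t}$ and Theorem~\ref{thm:finite-intrinsic}.

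\emph{First formula.} By Theorem~\ref{thm:finite-intrinsic}, $\rho_{\mathbf t}$ identifies $\mathcal H_i(\mathbf t)$ with $\Rep(U_q(\mathbf t))$ and $\mathcal H_{i-1}(\mathbf t)$ with $\Rep(U_{q+1}(\mathbf t))$, where $q+1=n-(i-1)$. Under these identifications the inclusion $\mathcal H_{i-1}(\mathbf t)\subseteq\mathcal H_i(\mathbf t)$ is extension by zero $\iota_*=\kappa_!$. Proposition~\ref{prop:horizontal-embeddings} gives a recollement with $\operatorname{Im}\iota_*=\Rep(U_{q+1}(\mathbf t))$ and quotient functor $j^*$. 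Proposition~\ref{prop:recollement-ttf}(2) then gives $\Rep(U_q(\mathbf t))/\Rep(U_{q+1}(\mathbf t))\simeq\Rep(L_q(\mathbf t))$. Alternatively, Corollary~\ref{cor:order-ideal-consequences} applied to $P=U_q(\mathbf t)$, $Q=U_{q+1}(\mathbf t)$, $R=L_q(\mathbf t)$ gives the same equivalence.

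\emph{Product decomposition.} First, $L_q(\mathbf t)$ is the disjoint union over $|F|=q$ of the sets $L_F=\{\mathbf a\le\mathbf t\mid\operatorname{supp}\mathbf a=F\}$. Elements with different supports of the same size $q$ are incomparable, since $\mathbf a\le\mathbf b$ forces $\operatorname{supp}\mathbf a\subseteq\operatorname{supp}\mathbf b$, and equal cardinality then forces equality. So $L_q(\mathbf t)$ is a disjoint union of posets, and its representation category is the product of the representation categories of the components. Second, when $\mathbf t\ge\mathbf1$, the map $\mathbf a\mapsto(a_j-1)_{j\in F}$ is an order isomorphism $L_F\cong\prod_{j\in F}[0,t_j-1]$. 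The hypothesis $\mathbf t\ge\mathbf1$ is only needed to ensure every $F$ contributes a nonempty factor; without it, some $L_F$ would be empty, which is harmless but changes the indexing.

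\emph{Complementary quotient.} Here $\mathcal A^-_{\mathbf t}\simeq\Rep(P_{\mathbf t})$ by Proposition~\ref{prop:negative-box}, and $\mathcal H_j(\mathbf t)$ corresponds to $\Rep(U_p(\mathbf t))$. Since $U_p(\mathbf t)$ is an upper order ideal with complement the lower ideal $D_{p-1}(\mathbf t)$, Corollary~\ref{cor:order-ideal-consequences} with $P=P_{\mathbf t}$ (finite, hence inward finite) gives $\Rep(P_{\mathbf t})/\Rep(U_p(\mathbf t))\simeq\Rep(D_{p-1}(\mathbf t))$, induced by restriction.

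The only point needing care is the bookkeeping. The quotients are taken in the abstract categories $\mathcal H_i(\mathbf t)$ and $\mathcal A^-_{\mathbf t}$, so one should note that an exact equivalence carrying one Serre subcategory onto another induces an equivalence of Gabriel quotients. I expect this to be the main, though mild, obstacle; everything else is a direct citation.
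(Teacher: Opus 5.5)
Your proposal is correct and follows essentially the same route as the paper. The first equivalence is the Gabriel quotient from the recollement of Proposition~\ref{prop:horizontal-embeddings}, or equivalently Corollary~\ref{cor:order-ideal-consequences} inside $U_q(\mathbf t)$. The product formula comes from splitting $L_q(\mathbf t)$ into the mutually incomparable components $L_F(\mathbf t)$ and subtracting $1$ in the coordinates indexed by $F$. The complementary quotient is Corollary~\ref{cor:order-ideal-consequences} applied to $P_{\mathbf t}=U_p(\mathbf t)\sqcup D_{p-1}(\mathbf t)$ and transported through $\rho_{\mathbf t}$; your remark that exact equivalences induce equivalences of Gabriel quotients is exactly the bookkeeping the paper leaves implicit in the word ``transported.''
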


\begin{proof}
The first equivalence is the Gabriel quotient supplied by
Proposition~\ref{prop:horizontal-embeddings}; equivalently, it is the
specialization of Theorem~\ref{thm:two-parameter}(3) with the box fixed.
When $\mathbf t\ge\mathbf1$, for $F\subseteq[n]$ with $|F|=q$, the mutually incomparable
components
\[
L_F(\mathbf t)=\{\mathbf a\in P_{\mathbf t}\mid
\operatorname{supp}(\mathbf a)=F\}
\]
are isomorphic to $\prod_{j\in F}[0,t_j-1]$ by subtracting $1$ in the
coordinates indexed by $F$.  This proves the product decomposition.  Finally,
\eqref{eq:finite-complementary-quotient} follows from
$P_{\mathbf t}=U_p(\mathbf t)\sqcup D_{p-1}(\mathbf t)$ and
Corollary~\ref{cor:order-ideal-consequences}, transported through the
finite-box equivalence of Proposition~\ref{prop:negative-box}.
\end{proof}

\begin{corollary}\label{cor:derived-horizontal-recollement}
Let $\mathbf t\in\mathbb N^n$, let $1\le i\le n-1$, and put $q=n-i$.
The abelian recollement in Proposition~\ref{prop:horizontal-embeddings} lifts
to the triangulated recollement:
\[
\begin{tikzcd}[column sep=3.4em]
{\scriptstyle D^b\!\left(\Rep(U_{q+1}(\mathbf t))\right)}
  \arrow[r, "{\scriptstyle \iota_*}" description]
& {\scriptstyle D^b\!\left(\Rep(U_q(\mathbf t))\right)}
  \arrow[l, bend right=20, "{\scriptstyle \mathbf L\iota^*}"']
  \arrow[l, bend left=20, "{\scriptstyle \iota^!}"]
  \arrow[r, "{\scriptstyle j^*}" description]
& {\scriptstyle D^b\!\left(\Rep(L_q(\mathbf t))\right)}
  \arrow[l, bend left=20, "{\scriptstyle j_!}"']
  \arrow[l, bend right=20, "{\scriptstyle j_*}"]
\end{tikzcd}.
\]
(the exact functors $\iota_*$, $\iota^!$, $j_!$, $j^*$, and $j_*$ are used degreewise).  Under the equivalences with the finite-box local cohomology categories, its three terms are
\[
D^b\!\left(\mathcal H_{i-1}(\mathbf t)\right),
\qquad
D^b\!\left(\mathcal H_i(\mathbf t)\right),
\qquad
D^b\!\left(\Rep(L_{n-i}(\mathbf t))\right),
\]
with the six functors transported from the displayed recollement.
Here $D^b(\mathcal H_{i-1}(\mathbf t))$ is identified with its fully faithful
image in $D^b(\mathcal H_i(\mathbf t))$ under the derived extension-by-zero
embedding.  Consequently
\[
\frac{D^b(\mathcal H_i(\mathbf t))}
     {D^b(\mathcal H_{i-1}(\mathbf t))}
\simeq
D^b\!\left(\Rep(L_{n-i}(\mathbf t))\right)
\simeq
D^b\!\left(
\frac{\mathcal H_i(\mathbf t)}{\mathcal H_{i-1}(\mathbf t)}
\right).
\]
If $\mathbf t\ge\mathbf1$, then moreover
\[
D^b\!\left(\Rep(L_q(\mathbf t))\right)
\simeq
\prod_{\substack{F\subseteq[n]\\|F|=q}}
D^b\!\left(\Rep\!\left(\prod_{j\in F}[0,t_j-1]\right)\right).
\]
In particular, for the squarefree box $\mathbf t=\mathbf1$,
\[
\frac{D^b(\mathcal H_i(\mathbf1))}
     {D^b(\mathcal H_{i-1}(\mathbf1))}
\simeq
\left(D^b(\Bbbk\text{-}\modcat)\right)^{\times\binom{n}{i}}.
\]
\end{corollary}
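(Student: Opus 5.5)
The plan is to obtain the triangulated recollement by specializing Corollary~\ref{cor:fs-derived-recollement} to the finite poset $P=U_q(\mathbf t)$. We use the order-dual (complementary) orientation, in which $U_{q+1}(\mathbf t)$ plays the closed part and $L_q(\mathbf t)$ the open part; this matches the abelian recollement of Proposition~\ref{prop:horizontal-embeddings}. Since $P$ is finite, both orientations lift by that corollary. In the lifted recollement the only non-exact adjoint is $\iota^*$, which must be replaced by $\mathbf L\iota^*$. The functors $\iota_*$ (extension by zero) and $\iota^!$ (restriction) are exact. By Lemma~\ref{lem:stratum-kan-exact}, $j_!$ and $j_*$ are exact as well. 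Hence these four act degreewise, and their adjunctions pass to $D^b$ unchanged.

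Concretely, I would proceed as follows.

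First, note that $\Rep(U_q(\mathbf t))$ has finite global dimension and enough projectives, so $\mathbf L\iota^*$ exists on $D^b$ and preserves boundedness. Degreewise exact adjoints remain adjoint, and $\iota^*\dashv\iota_*$ derives to $\mathbf L\iota^*\dashv\iota_*$.

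Second, full faithfulness follows from $\iota^!\iota_*\cong\mathrm{id}$ and $j^*j_!\cong\mathrm{id}\cong j^*j_*$, which hold degreewise.

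Third, I would show $\ker j^*=\operatorname{Im}\iota_*$ as in the proof of Corollary~\ref{cor:fs-derived-recollement}. If $j^*X\simeq0$, then the cohomology of $X$ is supported on $U_{q+1}(\mathbf t)$. The counit $\iota_*\iota^!X\to X$ is then a quasi-isomorphism: it is the identity on $U_{q+1}(\mathbf t)$, and on $L_q(\mathbf t)$ the components of $X$ form acyclic complexes.

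Next comes the transport to local cohomology categories. Theorem~\ref{thm:finite-intrinsic} gives exact equivalences $\mathcal H_i(\mathbf t)\simeq\Rep(U_q(\mathbf t))$ and $\mathcal H_{i-1}(\mathbf t)\simeq\Rep(U_{q+1}(\mathbf t))$. Exact equivalences induce equivalences of bounded derived categories, and under them $\iota_*$ corresponds to the inclusion. This yields the three identified terms, with $D^b(\mathcal H_{i-1}(\mathbf t))$ fully faithfully embedded via the derived $\iota_*$.

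The Verdier quotient statement is the standard consequence of a triangulated recollement: $j^*$ induces $D^b(\text{middle})/\operatorname{Im}\iota_*\simeq D^b(\Rep(L_q(\mathbf t)))$. The second equivalence in that display comes from Corollary~\ref{cor:finite-quotient-formulas}, $\mathcal H_i/\mathcal H_{i-1}\simeq\Rep(L_q(\mathbf t))$.

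For the product decomposition, I would use that $L_q(\mathbf t)$ is a disjoint union of mutually incomparable components $L_F(\mathbf t)\cong\prod_{j\in F}[0,t_j-1]$, as in Corollary~\ref{cor:finite-quotient-formulas}. The abelian category then splits as a finite product, and $D^b$ of a finite product of abelian categories is the product of the $D^b$'s.

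For $\mathbf t=\mathbf1$, each component is a single point, so each factor is $D^b(\Bbbk\text{-}\modcat)$. There are $\binom nq=\binom ni$ such components.

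The main obstacle is the third step, checking the kernel identification and that derived full faithfulness of $\iota_*$ really holds in this orientation. The difficulty is that $\iota^*$ is only right exact. I would handle this by using only $\iota^!\iota_*\cong\mathrm{id}$, computed degreewise, together with the counit argument above, so that $\mathbf L\iota^*$ never needs to be computed explicitly.
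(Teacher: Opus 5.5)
Your proposal is correct and follows the paper's proof. Like the paper, you specialize Corollary~\ref{cor:fs-derived-recollement} to the finite poset $U_q(\mathbf t)$ in its complementary orientation, use Lemma~\ref{lem:stratum-kan-exact} so that only $\iota^*$ must be derived, and read off the Verdier quotient, product and squarefree formulas from Corollary~\ref{cor:finite-quotient-formulas}; your explicit counit argument with $\iota_*\iota^!X\to X$ spells out the order-dual kernel identification that the paper leaves to the cited corollary.
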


\begin{proof}
Since $U_q(\mathbf t)$ is finite,
Corollary~\ref{cor:fs-derived-recollement} gives the bounded-derived
recollement.  Lemma~\ref{lem:stratum-kan-exact} shows that in this particular
case $j_!$ and $j_*$ are already exact, so the displayed functors have the
stated form.  The displayed recollement induces the asserted Verdier quotient
equivalence, and the product and squarefree formulas are the derived forms of
Corollary~\ref{cor:finite-quotient-formulas}.
\end{proof}

Thus $j_*$ is extension by zero, whereas $j_!$ extends a rank-$q$ representation to all higher support ranks.  The next result gives a canonical and functorial description of this extension.

\subsection{The rank-layer resolution}\label{subsec:rank-resolution}

Continue to assume $1\le i\le n-1$ and $q=n-i$.
For $q\le r\le n$ define an exact functor
\[
E_r:\Rep(L_q(\mathbf t))\longrightarrow\Rep(L_r(\mathbf t))
\]
by
\[
(E_rV)_{\mathbf b}
=
\bigoplus_{\substack{F\subseteq\operatorname{supp}(\mathbf b)\\|F|=q}}
V_{\mathbf b_F}.
\]
If $\mathbf b\le\mathbf c$ lie in $L_r(\mathbf t)$, then they have the same support, and the structure map of $E_rV$ is the direct sum of the maps
$V_{\mathbf b_F}\to V_{\mathbf c_F}$.  Notice that
\[
E_q=\operatorname{id}_{\Rep(L_q(\mathbf t))}.
\]
Since $j_q=j$, put
\[
C_r(V)=j_{r,!}(E_rV);
\]
in particular, $C_q(V)=j_!V$.
At a vertex $\mathbf a$ with $A=\operatorname{supp}(\mathbf a)$ one has
\begin{equation}\label{eq:rank-term-stalk}
C_r(V)_{\mathbf a}
=
\bigoplus_{\substack{F\subseteq B\subseteq A\\|F|=q,\ |B|=r}}
V_{\mathbf a_F},
\end{equation}
with value zero when $r>|A|$.

For a finite set $X$, write $\binom{X}{m}$ for the set of $m$-element
subsets of $X$.
Fix the natural order on $[n]$.  For $v\in V_{\mathbf a_F}$, write
$v_{F,B}$ for its copy in the summand of \eqref{eq:rank-term-stalk} indexed by
$(F,B)$.  Define a natural transformation $d_r:C_r\to C_{r-1}$ by
\begin{equation}\label{eq:rank-differential}
d_r(v_{F,B})
=
\sum_{k\in B\setminus F}
(-1)^{\operatorname{pos}_{B\setminus F}(k)-1}
v_{F,B\setminus\{k\}},
\end{equation}
where $\operatorname{pos}_{B\setminus F}(k)$ is the position of $k$ in the increasing ordering of $B\setminus F$.  The identity on the coefficient $V_{\mathbf a_F}$ is understood in every summand.

\begin{theorem}[Functorial rank-layer resolution]\label{thm:rank-layer-resolution}
For every $V\in\Rep(L_q(\mathbf t))$, the maps \eqref{eq:rank-differential}, together with the canonical epimorphism
$\epsilon_V:j_!V\to j_*V$ of Proposition~\ref{prop:horizontal-embeddings}, form a functorial exact sequence
\begin{equation}\label{eq:rank-layer-resolution}
\boxed{
\begin{aligned}
0&\longrightarrow C_n(V)\xrightarrow{d_n}C_{n-1}(V)
\longrightarrow\cdots\longrightarrow C_{q+1}(V)\\
&\xrightarrow{d_{q+1}}C_q(V)=j_!V
\xrightarrow{\epsilon_V}j_*V\longrightarrow0.
\end{aligned}}
\end{equation}
\end{theorem}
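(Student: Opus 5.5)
The plan is to verify exactness vertexwise, since every term is a representation of the finite poset $U_q(\mathbf t)$ and exactness, as well as naturality, can be checked pointwise. I would proceed in three steps.

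\textbf{Step 1: the maps are morphisms of representations, natural in $V$, and the structure maps are correct.} For $\mathbf a\le\mathbf a'$ in $U_q(\mathbf t)$ with supports $A\subseteq A'$, the structure map of $C_r(V)$ comes from the colimit description of $j_{r,!}$ in Lemma~\ref{lem:stratum-kan-exact}. It sends the summand indexed by $(F,B)$ at $\mathbf a$ to the summand indexed by the same pair $(F,B)$ at $\mathbf a'$, via the structure map $V_{\mathbf a_F}\to V_{\mathbf a'_F}$ of $V$. This holds because $\mathbf a_B\le\mathbf a'_B$, and both are terminal in their support-$B$ components. Since $d_r$ only changes the index $B$ and acts as the identity on the coefficient $V_{\mathbf a_F}$, it commutes with these structure maps and is natural in $V$. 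The composite $\epsilon_V\circ d_{q+1}$ vanishes: on $L_q(\mathbf t)$ the source $C_{q+1}(V)$ is zero, and on $U_{q+1}(\mathbf t)$ the target $j_*V$ is zero.

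\textbf{Step 2: reduce the complex at a vertex to simplex complexes.} Fix $\mathbf a$ with $A=\operatorname{supp}(\mathbf a)$ and $m=|A|\ge q$. By \eqref{eq:rank-term-stalk}, the complex at $\mathbf a$ splits as a direct sum over $F\in\binom{A}{q}$. The summand for $F$ is
\[
V_{\mathbf a_F}\otimes_\Bbbk K_F,
\qquad
K_F:\ 0\to\Bbbk\bigl[\tbinom{A\setminus F}{m-q}\bigr]\to\cdots\to\Bbbk\bigl[\tbinom{A\setminus F}{1}\bigr]\to\Bbbk\bigl[\tbinom{A\setminus F}{0}\bigr].
\]
Here $B\setminus F$ ranges over subsets of $A\setminus F$, and the differential \eqref{eq:rank-differential} is the standard simplicial boundary with signs $(-1)^{\operatorname{pos}-1}$. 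So $K_F$ is the augmented chain complex of the full simplex on the vertex set $A\setminus F$, with the empty face in degree $q$. The standard sign check shows $d_{r-1}d_r=0$: each pair $k\ne k'$ is removed in both orders with opposite signs.

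\textbf{Step 3: compute homology.} If $m>q$, then $A\setminus F\ne\varnothing$ and the augmented chain complex of a nonempty simplex is acyclic; explicitly, coning off a fixed vertex $k_0$ gives a contracting homotopy. Also $(j_*V)_{\mathbf a}=0$ there, so the whole sequence is exact at $\mathbf a$. If $m=q$, then $C_r(V)_{\mathbf a}=0$ for $r>q$, and $C_q(V)_{\mathbf a}=V_{\mathbf a}$ maps isomorphically to $(j_*V)_{\mathbf a}$ under $\epsilon_V$, by its description in Proposition~\ref{prop:horizontal-embeddings}. For the latter identification one checks that the summand $F=A$ is exactly the copy of $V_{\mathbf a}$ and that $\epsilon_V$ is the identity there. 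Tensoring with $V_{\mathbf a_F}$ over a field preserves exactness, so the sequence is exact at every vertex.

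\textbf{Main obstacle.} I expect Step 1 to need the most care. One has to make the identification of the colimit stalks of $j_{r,!}(E_rV)$ with \eqref{eq:rank-term-stalk} explicit enough to see that the structure maps preserve the index $(F,B)$ and never mix summands. For this I would use that $\mathbf a_B$ is terminal in $\{\mathbf b\le\mathbf a:\operatorname{supp}\mathbf b=B\}$, that $(\mathbf a_B)_F=\mathbf a_F$, and that $(E_rV)_{\mathbf a_B}=\bigoplus_{F\subseteq B}V_{\mathbf a_F}$. Once that bookkeeping is settled, the rest is the acyclicity of augmented simplex chains.
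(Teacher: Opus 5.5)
Your proposal is correct and follows essentially the same route as the paper's proof. Both use the sign check for $d_{r-1}d_r=0$, get naturality from the differentials acting as identities on the coefficients $V_{\mathbf a_F}$, and prove exactness vertexwise by splitting the stalk complex over $F\in\binom{A}{q}$ into augmented simplex chain complexes on $A\setminus F$, with the case $|A|=q$ reducing to $\epsilon_V$ being the identity. Your extra bookkeeping (the structure maps of $j_{r,!}(E_rV)$ preserve the index $(F,B)$, and $\epsilon_V\circ d_{q+1}=0$) only makes explicit what the paper leaves implicit.
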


\begin{proof}
The sign rule gives $d_{r-1}d_r=0$, since deleting two distinct elements in the two possible orders produces opposite signs.  Naturality follows because every component of $d_r$ is an identity on $V_{\mathbf a_F}$ and the structure maps of $V$ commute with these identities.

Exactness may be checked vertexwise.  Fix $\mathbf a\in U_q(\mathbf t)$, put $A=\operatorname{supp}(\mathbf a)$ and $s=|A|$, and first suppose $s>q$.  For a fixed $q$-subset $F\subseteq A$, the summands with coefficient $V_{\mathbf a_F}$ form
\[
0\longrightarrow
\Bbbk\!\left[\binom{A\setminus F}{s-q}\right]\longrightarrow\cdots\longrightarrow
\Bbbk\!\left[\binom{A\setminus F}{1}\right]\longrightarrow
\Bbbk\!\left[\{\varnothing\}\right]\longrightarrow0,
\]
tensored with $V_{\mathbf a_F}$.  With the differential \eqref{eq:rank-differential}, this is the augmented simplicial chain complex of the simplex on the nonempty vertex set $A\setminus F$, hence is exact.  Summing over all $F\in\binom{A}{q}$ proves exactness at $\mathbf a$.  If $s=q$, the only nonzero part is
\[
V_{\mathbf a}\xrightarrow{\mathrm{id}}V_{\mathbf a},
\]
coming from $C_q(V)_{\mathbf a}\to(j_*V)_{\mathbf a}$.  This proves the theorem.
\end{proof}

\begin{corollary}\label{cor:projective-rank-resolution}
If $V$ is projective in $\Rep(L_q(\mathbf t))$, then $E_rV$ is projective in
$\Rep(L_r(\mathbf t))$ and $C_r(V)$ is projective in
$\Rep(U_q(\mathbf t))$ for every $q\le r\le n$.  Consequently
\[
0\longrightarrow C_n(V)\longrightarrow\cdots\longrightarrow C_q(V)=j_!V
\xrightarrow{\epsilon_V}j_*V\longrightarrow0
\]
is a projective resolution of $j_*V$ of length at most $n-q=i$.  In the
Grothendieck group,
\[
[j_*V]=\sum_{r=q}^{n}(-1)^{r-q}[C_r(V)].
\]
\end{corollary}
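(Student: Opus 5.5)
The plan has three steps: show that each $E_rV$ is projective, show that $j_{r,!}$ carries projectives to projectives, and then read off the resolution and the Grothendieck-group identity from Theorem~\ref{thm:rank-layer-resolution}.

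\emph{Step 1: $E_r$ preserves projectives.} For $F\subseteq B$ with $|F|=q$, $|B|=r$, let $L_B(\mathbf t)$ be the set of elements of $L_r(\mathbf t)$ with support exactly $B$. Consider the map
\[
\varphi_{F,B}:L_B(\mathbf t)\to L_F(\mathbf t),\qquad \mathbf b\mapsto\mathbf b_F .
\]
Since $L_r(\mathbf t)$ is the disjoint union of the mutually incomparable components $L_B(\mathbf t)$, the functor $E_r$ is the direct sum over pairs $(F,B)$ of the pullbacks $\varphi_{F,B}^*$ applied to the $L_F$-component of $V$.

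The key observation is that $\varphi_{F,B}$ is a product projection. After subtracting $\mathbf 1$ on the support coordinates, $L_B(\mathbf t)\cong\prod_{j\in B}[0,t_j-1]$, and $\varphi_{F,B}$ becomes the projection onto the factors indexed by $F$. Pulling back a representable $P_{\mathbf c}$ along this projection gives the representable of the product poset at $(\mathbf c,\mathbf 0)$. Indeed, the pullback has value $\Bbbk$ exactly on $\{\mathbf b\mid \mathbf b_F\ge\mathbf c\}$, which is the principal upper set of $(\mathbf c,\mathbf 0)$ in the product. So each $\varphi_{F,B}^*$ sends indecomposable projectives to indecomposable projectives. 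By additivity, and since projectives in $\Rep(L_q(\mathbf t))$ are direct sums of representables, $E_rV$ is projective.

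If some $t_j=0$, the corresponding components are simply empty and the same argument applies.

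\emph{Step 2: $j_{r,!}$ preserves projectives.} The left Kan extension $j_{r,!}$ is left adjoint to the exact restriction $j_r^*$. A left adjoint of an exact functor preserves projectives. Concretely, $j_{r,!}$ sends $P^{L_r}_{\mathbf b}$ to $P^{U_q}_{\mathbf b}$ by Yoneda. Hence $C_r(V)=j_{r,!}E_rV$ is projective in $\Rep(U_q(\mathbf t))$.

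\emph{Step 3: conclusion.} Theorem~\ref{thm:rank-layer-resolution} gives an exact sequence whose terms $C_n(V),\dots,C_q(V)$ are now projective. It is therefore a projective resolution of $j_*V$ of length $n-q=i$. The Grothendieck-group identity is the alternating sum of classes along this exact sequence.

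I expect the main obstacle to be Step 1, specifically verifying that $\mathbf b\mapsto\mathbf b_F$ really is a product projection on each support component, so that pulling back representables yields representables. Steps 2 and 3 are formal.
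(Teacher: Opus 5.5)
Your proposal is correct and takes essentially the same route as the paper. The paper also decomposes into support components, identifies $(E_rV)|_{L_B(\mathbf t)}\cong\bigoplus_{F}p_{B,F}^*V_F$, and observes that pulling back along the coordinate projection sends the principal projective at $\mathbf a$ to the principal projective at the point whose $B\setminus F$ coordinates equal $1$; it then uses that $j_{r,!}$ is left adjoint to the exact restriction $j_r^*$. One small correction: say ``length at most $n-q$'' rather than ``length $n-q$'', since some terms $C_r(V)$ can vanish, for example when some $t_j=0$.
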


\begin{proof}
Write
\[
L_q(\mathbf t)=\bigsqcup_{\substack{F\subseteq[n]\\|F|=q}}L_F(\mathbf t),
\qquad
L_r(\mathbf t)=\bigsqcup_{\substack{B\subseteq[n]\\|B|=r}}L_B(\mathbf t).
\]
For $F\subseteq[n]$ with $|F|=q$, write $V_F=V|_{L_F(\mathbf t)}$.  If $V$ is projective, then
each $V_F$ is a finite direct sum of principal projectives on
$L_F(\mathbf t)$.  For $F\subseteq B$, pullback along
the coordinate projection
\[
p_{B,F}:L_B(\mathbf t)\longrightarrow L_F(\mathbf t)
\]
sends the principal projective at $\mathbf a\in L_F(\mathbf t)$ to the
principal projective at the point of $L_B(\mathbf t)$ whose $F$-coordinates
are those of $\mathbf a$ and whose coordinates in $B\setminus F$ are all
$1$.  Moreover,
\[
(E_rV)|_{L_B(\mathbf t)}
\cong
\bigoplus_{\substack{F\subseteq B\\|F|=q}}p_{B,F}^*V_F.
\]
Hence each pullback $p_{B,F}^*V_F$ is projective, and therefore so is
$E_rV$.  Finally, $j_{r,!}$ is left adjoint to the exact restriction
functor $j_r^*$, so it preserves projectives.  Thus
$C_r(V)=(j_{r,!}\circ E_r)(V)$ is projective.  The remaining assertions follow from
Theorem~\ref{thm:rank-layer-resolution}.
\end{proof}

\begin{remark}\label{rem:dual-rank-resolution}
Order reversal gives a dual co-rank coresolution.  Put
\[
\ell^\vee(\mathbf a)=|\{k\in[n]\mid a_k<t_k\}|,
\qquad
U_q^\vee(\mathbf t)=\{\mathbf a\in P_{\mathbf t}\mid
\ell^\vee(\mathbf a)\ge q\},
\]
and let $L_r^\vee(\mathbf t)$ be the stratum on which
$\ell^\vee(\mathbf a)=r$.  The order-reversing bijection
$\tau_{\mathbf t}(\mathbf a)=\mathbf t-\mathbf a$ identifies
$U_q^\vee(\mathbf t)$ with $U_q(\mathbf t)^{\mathrm{op}}$ and
$L_r^\vee(\mathbf t)$ with $L_r(\mathbf t)^{\mathrm{op}}$.  Consequently the
exact contravariant equivalence $\tau_{\mathbf t}^*\circ D$ sends the
rank-layer resolution of Theorem~\ref{thm:rank-layer-resolution} in
$\Rep(U_q(\mathbf t))$ to a right-Kan coresolution in
$\Rep(U_q^\vee(\mathbf t))$, whose terms are induced from the co-rank strata
$L_r^\vee(\mathbf t)$.  We shall not need its explicit formula below.
\end{remark}

\subsection{Nakayama--Serre exchange for rank-layer resolutions}\label{subsec:higher-nakayama}

Retain the notation and assumptions of the preceding subsection.  For a
finite poset $P$, write
\[
\boldsymbol\nu_P
=
DA_P\otimes_{A_P}^{\mathbf L}-
:
D^b(\Rep(P))\longrightarrow D^b(\Rep(P))
\]
for the derived Nakayama functor.  With the incidence-algebra convention fixed
in Section~\ref{sec:homological}, one has $A_{P_{\mathbf t}}=\Lambda_{\mathbf t}$.
Since $A_P$ has finite global dimension, $\boldsymbol\nu_P$ is the Serre
functor of $D^b(\Rep(P))$ and hence agrees, up to canonical natural
isomorphism, with the right Serre functor $\mathbb S_P^R$ introduced above.
Thus $\boldsymbol\nu_P\simeq\mathbb S_P^R$.  For $X,Y\in D^b(\Rep(P))$
there are bifunctorial isomorphisms
\[
D\Hom_{D^b(\Rep(P))}(X,Y)
\cong
\Hom_{D^b(\Rep(P))}(Y,\boldsymbol\nu_P X).
\]
It is therefore an autoequivalence, with quasi-inverse
\[
\boldsymbol\nu_P^{-1}
\simeq
\mathbf R\Hom_{A_P}(DA_P,-).
\]

For $q\le r\le n$, recall the finite-box inclusion
\[
j_r:L_r(\mathbf t)\hookrightarrow U_q(\mathbf t).
\]
By Lemma~\ref{lem:stratum-kan-exact}, both $j_{r,!}$ and $j_{r,*}$ are exact;
we use the same symbols for the induced triangle functors on bounded derived
categories.

\begin{proposition}[Finite-box Nakayama--Serre exchange]\label{prop:nakayama-kan-exchange}
For every $q\le r\le n$, there are canonical natural isomorphisms
\begin{equation}\label{eq:nakayama-kan-exchange}
\boldsymbol\nu_{U_q(\mathbf t)}\circ j_{r,!}
\xRightarrow{\ \sim\ }
j_{r,*}\circ\boldsymbol\nu_{L_r(\mathbf t)},
\qquad
\boldsymbol\nu_{U_q(\mathbf t)}^{-1}\circ j_{r,*}
\xRightarrow{\ \sim\ }
j_{r,!}\circ\boldsymbol\nu_{L_r(\mathbf t)}^{-1}.
\end{equation}
Both sides are triangle functors
\[
D^b\!\left(\Rep(L_r(\mathbf t))\right)
\longrightarrow
D^b\!\left(\Rep(U_q(\mathbf t))\right).
\]
Thus Nakayama--Serre duality exchanges the left-Kan and right-Kan extensions
from each support-rank stratum.
\end{proposition}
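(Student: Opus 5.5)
The first isomorphism is a direct instance of Theorem~\ref{thm:right-serre-exchange}, applied to $f=j_r:L_r(\mathbf t)\hookrightarrow U_q(\mathbf t)$. The second isomorphism will then follow by inverting the first.

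\emph{Checking the hypotheses.} Both $L_r(\mathbf t)$ and $U_q(\mathbf t)$ are finite posets.
\begin{itemize}
\item Finite posets are inward finite.
\item Their incidence algebras have finite global dimension, so every finite-support object has a finite resolution by representable projectives. Hence both posets are right-Serre finite.
\item By Lemma~\ref{lem:stratum-kan-exact}, $j_{r,!}$ and $j_{r,*}$ are exact. So $\mathbf L j_{r,!}=j_{r,!}$ and $\mathbf R j_{r,*}=j_{r,*}$ can be applied degreewise.
\item The abelian adjunctions $j_{r,!}\dashv j_r^*\dashv j_{r,*}$ pass to bounded derived categories: exact functors preserve quasi-isomorphisms, and the unit and counit act termwise, so the triangle identities persist.
\end{itemize}
Theorem~\ref{thm:right-serre-exchange} then gives
\[
\mathbb S^R_{U_q(\mathbf t)}\circ j_{r,!}\simeq j_{r,*}\circ\mathbb S^R_{L_r(\mathbf t)}.
\]
By Proposition~\ref{prop:right-serre-existence}, and the identification $\boldsymbol\nu_P\simeq\mathbb S_P^R$ recorded just before the statement, this is the first isomorphism in \eqref{eq:nakayama-kan-exchange}.

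\emph{Canonicity and triangle structure.} The isomorphism is canonical because it comes from Yoneda applied to a chain of bifunctorial isomorphisms: Serre duality, the adjunction, Serre duality, the adjunction. We will also check that it is compatible with the triangle structures. All four functors are triangle functors:
\begin{itemize}
\item the Nakayama functors are derived tensor functors;
\item the Kan extensions are exact functors applied degreewise.
\end{itemize}
Serre functors carry a canonical triangle structure (Bondal--Kapranov), and the Yoneda-induced isomorphism respects it. Alternatively, there is a concrete model. Represent $X$ by a bounded complex of projectives $Q^\bullet$. Since $j_{r,!}$ is left adjoint to an exact functor, it preserves projectives, so $j_{r,!}Q^\bullet$ is again projective. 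Termwise,
\[
\nu^0(j_{r,!}P_p)=\nu^0(P_{j_r p})=I_{j_r p}\cong j_{r,*}I_p,
\]
where $j_{r,*}I_p\cong I_{j_r p}$ holds because $j_{r,*}$ is right adjoint to restriction, which sends $S_{j_r p}$ to $S_p$. This gives the isomorphism on the level of complexes and shows it is a morphism of triangle functors.

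\emph{The second isomorphism.} Since $\boldsymbol\nu_{U_q(\mathbf t)}$ and $\boldsymbol\nu_{L_r(\mathbf t)}$ are autoequivalences, we compose the first isomorphism with $\boldsymbol\nu^{-1}_{U_q(\mathbf t)}$ on the left and with $\boldsymbol\nu^{-1}_{L_r(\mathbf t)}$ on the right. This gives
\[
j_{r,!}\circ\boldsymbol\nu^{-1}_{L_r(\mathbf t)}\simeq\boldsymbol\nu^{-1}_{U_q(\mathbf t)}\circ j_{r,*}.
\]

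\emph{Main obstacle.} The main point needing care is confirming that Theorem~\ref{thm:right-serre-exchange} applies, that is, that the derived Kan extensions really are adjoints on $D^b$ of the finite-support categories. Exactness (Lemma~\ref{lem:stratum-kan-exact}) settles this directly. The remaining check is the projective-to-injective identification $j_{r,*}I_p\cong I_{j_r p}$, which follows from the adjunction as explained above.
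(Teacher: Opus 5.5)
Your proposal is correct and takes the same route as the paper. You apply Theorem~\ref{thm:right-serre-exchange} to $j_r:L_r(\mathbf t)\hookrightarrow U_q(\mathbf t)$, and you use finiteness together with Lemma~\ref{lem:stratum-kan-exact} to see that the exact Kan extensions serve as their own derived functors and stay adjoint on $D^b$. You then identify the right Serre functors with the derived Nakayama functors via Proposition~\ref{prop:right-serre-existence}, and obtain the second isomorphism by composing with the inverse autoequivalences. Your termwise model $\nu^0(j_{r,!}P_p)\cong I_{j_r p}\cong j_{r,*}I_p$ and your remarks on triangle compatibility are valid supplements that the paper's proof does not spell out.
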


\begin{proof}
Apply Theorem~\ref{thm:right-serre-exchange} to
$j_r:L_r(\mathbf t)\hookrightarrow U_q(\mathbf t)$.  Both posets are finite,
so Proposition~\ref{prop:right-serre-existence} identifies their right Serre
functors with the derived Nakayama functors.  Since $j_{r,!}$ and $j_{r,*}$
are exact, their derived functors are represented by the same Kan extension
functors.  This gives the first isomorphism; the second follows by composing
with the inverse Serre autoequivalences.
\end{proof}

If $V$ is projective in $\Rep(L_q(\mathbf t))$, then Corollary~\ref{cor:projective-rank-resolution}
shows that $E_rV$ and $C_r(V)=j_{r,!}(E_rV)$ are projective for every
$q\le r\le n$.  Hence Proposition~\ref{prop:nakayama-kan-exchange} applies
term by term to the rank-layer resolution:
\[
\boldsymbol\nu_{U_q(\mathbf t)}\bigl(C_r(V)\bigr)
\cong
j_{r,*}\!\left(\boldsymbol\nu_{L_r(\mathbf t)}(E_rV)\right).
\]
Thus the left-Kan rank resolution is transformed functorially into a
right-Kan, or costandard, rank complex.

\medskip\noindent\textbf{Relation with Brun--Fl\o ystad.}
We use the same symbol $\pi_{\mathbf t}$ for the equivalence induced on
bounded derived categories.  On the full box $P_{\mathbf t}$,
the natural quasi-isomorphism used in the proof of
Proposition~\ref{prop:naka-ext}, together with
\cite[Proposition~3.9]{BF}, gives
\begin{equation}\label{eq:BF-derived-nakayama-comparison}
\boldsymbol\nu_{P_{\mathbf t}}\circ\pi_{\mathbf t}
\simeq
\bigl(\pi_{\mathbf t}\circ\mathcal N_{\mathbf t}^{\mathbf1}\bigr)[n].
\end{equation}
Here we use the standard natural identification, valid for every perfect
complex $X$ of left $\Lambda_{\mathbf t}$-modules,
\[
D\mathbf R\!\Hom_{\Lambda_{\mathbf t}}(X,\Lambda_{\mathbf t})
\simeq
D\Lambda_{\mathbf t}\otimes_{\Lambda_{\mathbf t}}^{\mathbf L}X.
\]
Thus the full-box derived Nakayama functor is, up to a uniform shift, the
incidence-algebra realization of the first full Nakayama construction of
Brun--Fl\o ystad.  After fixing $i<n$ and $q=n-i$, however,
$\boldsymbol\nu_{U_q(\mathbf t)}$ is the intrinsic Serre functor of the
smaller incidence algebra $A_{U_q(\mathbf t)}$ and governs the internal
homological structure of the already formed category
$\mathcal H_i(\mathbf t)$.  It should not be interpreted as an additional BF
iterate or as another local cohomology functor.

\medskip\noindent\textbf{The costandard rank complex.}
The preceding proposition now gives a direct derived description of the
Nakayama image of the canonical right section $j_*V$.

\begin{theorem}[Rank complex for cohomological Nakayama images]\label{thm:higher-nakayama-rank-complex}
Let $V$ be projective in $\Rep(L_q(\mathbf t))$.  For $q\le r\le n$, the
object $E_rV$ is projective in $\Rep(L_r(\mathbf t))$; hence
$\boldsymbol\nu_{L_r(\mathbf t)}(E_rV)$ is concentrated in degree zero, and
\[
j_{r,*}\!\left(\boldsymbol\nu_{L_r(\mathbf t)}(E_rV)\right)
\in\Rep(U_q(\mathbf t)).
\]
Place this object in cohomological degree $q-r$.  Then there is a functorial
isomorphism in $D^b(\Rep(U_q(\mathbf t)))$
\begin{equation}\label{eq:higher-nakayama-complex}
\boxed{
\boldsymbol\nu_{U_q(\mathbf t)}(j_*V)
\simeq
\left[
\begin{aligned}
\cdots&\longrightarrow 0\longrightarrow
j_{n,*}\!\left(\boldsymbol\nu_{L_n(\mathbf t)}(E_nV)\right)\\
&\longrightarrow\cdots\longrightarrow
j_{q+1,*}\!\left(\boldsymbol\nu_{L_{q+1}(\mathbf t)}(E_{q+1}V)\right)\\
&\longrightarrow
j_*\!\left(\boldsymbol\nu_{L_q(\mathbf t)}(V)\right)
\longrightarrow 0\longrightarrow\cdots
\end{aligned}
\right].}
\end{equation}
The displayed complex is zero outside cohomological degrees
$q-n,\ldots,0$.  Consequently, for $0\le p\le n-q=i$,
\begin{equation}\label{eq:higher-nakayama-ext}
H^{-p}\!\left(\boldsymbol\nu_{U_q(\mathbf t)}(j_*V)\right)
\cong
D\Ext^p_{A_{U_q(\mathbf t)}}
\!\left(j_*V,A_{U_q(\mathbf t)}\right),
\end{equation}
as objects of $\Rep(U_q(\mathbf t))$, and these cohomology objects vanish for
$p>i$.  The complex
\eqref{eq:higher-nakayama-complex} is the costandard rank complex associated
with $V$.
\end{theorem}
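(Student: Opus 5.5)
The plan is to apply the derived Nakayama functor $\boldsymbol\nu_{U_q(\mathbf t)}$ to the rank-layer resolution of Theorem~\ref{thm:rank-layer-resolution}. By Corollary~\ref{cor:projective-rank-resolution}, the truncated complex
\[
C_\bullet(V):\quad 0\to C_n(V)\to\cdots\to C_{q+1}(V)\to C_q(V)\to 0,
\]
with $C_r(V)$ placed in cohomological degree $q-r$, is a bounded complex of projectives in $\Rep(U_q(\mathbf t))$. The augmentation $\epsilon_V$ makes it quasi-isomorphic to $j_*V$ in degree $0$.

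The first step is projectivity. Projectivity of $E_rV$ is part of Corollary~\ref{cor:projective-rank-resolution}. Since $E_rV$ is projective, $\boldsymbol\nu_{L_r(\mathbf t)}(E_rV)=\nu^0_{L_r(\mathbf t)}(E_rV)$ is an injective module concentrated in degree zero, and $j_{r,*}$ of it is a module in $\Rep(U_q(\mathbf t))$.

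Next I compute $\boldsymbol\nu_{U_q(\mathbf t)}(j_*V)$. Because $C_\bullet(V)$ is a bounded projective complex, $\boldsymbol\nu$ is computed by applying the underived Nakayama functor $\nu^0=D\Hom(-,A_{U_q(\mathbf t)})$ termwise, which gives $\nu^0(C_\bullet(V))$. On each term, Proposition~\ref{prop:nakayama-kan-exchange} gives
\[
\nu^0(C_r(V))=\boldsymbol\nu_{U_q(\mathbf t)}(j_{r,!}E_rV)\cong j_{r,*}\boldsymbol\nu_{L_r(\mathbf t)}(E_rV).
\]
For $r=q$ this uses $E_q=\mathrm{id}$ and $j_q=j$. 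The differentials of the displayed complex are then defined by transporting $\nu^0(d_r)$ through these isomorphisms, so the complex in \eqref{eq:higher-nakayama-complex} is, by construction, isomorphic as a complex to $\nu^0(C_\bullet(V))$. Its support in degrees $q-n,\dots,0$ is inherited from $C_\bullet(V)$.

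The main obstacle is making sure the termwise exchange isomorphisms are compatible with the differentials. Since I transport the differentials, I only need the isomorphisms to be canonical at the level of modules, not merely of derived objects. The exchange is natural in the derived category. On projectives in degree zero, however, $\Hom_{D^b}$ between stalk modules equals module $\Hom$, so naturality in $D^b$ gives naturality on the additive category of projectives. Functoriality in $V$ then follows from the naturality of $d_r$ and of Proposition~\ref{prop:nakayama-kan-exchange}.

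For \eqref{eq:higher-nakayama-ext}, I use
\[
\nu^0(C_\bullet)=D\Hom_{A}(C_\bullet,A),
\]
where $A=A_{U_q(\mathbf t)}$. The complex $\Hom_A(C_\bullet,A)$ has cohomology $\Ext^p_A(j_*V,A)$ in degree $p$, since $C_\bullet$ is a projective resolution of $j_*V$. Dualizing reverses the grading, so $H^{-p}$ of the Nakayama image is $D\Ext^p_A(j_*V,A)$. The left module structure comes from the bimodule $A$, consistent with $\nu_A=D\Hom_A(-,A)$. Vanishing for $p>i=n-q$ holds because the resolution has length $n-q$.
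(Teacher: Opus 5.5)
Your proposal is correct and follows essentially the same route as the paper: apply $\boldsymbol\nu_{U_q(\mathbf t)}$ termwise to the projective rank-layer resolution of Corollary~\ref{cor:projective-rank-resolution}, identify each term via Proposition~\ref{prop:nakayama-kan-exchange}, and read off cohomology from $H^{-p}(DA\otimes^{\mathbf L}_A M)\cong D\Ext^p_A(M,A)$. You define the differentials of the displayed complex by transport and check functoriality in $V$ using naturality of the exchange on projective stalk complexes; this makes precise what the paper compresses into the phrase that naturality of the exchange identifies the terms compatibly with the rank-layer differentials.
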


\begin{proof}
By Corollary~\ref{cor:projective-rank-resolution},
\eqref{eq:rank-layer-resolution} is a projective resolution of $j_*V$ of
length at most $i$.  Since every $C_r(V)$ is projective, the derived Nakayama
functor is computed by applying it term by term.  Moreover $E_rV$ is
projective, and Proposition~\ref{prop:nakayama-kan-exchange} gives
\[
\boldsymbol\nu_{U_q(\mathbf t)}\bigl(C_r(V)\bigr)
=
\boldsymbol\nu_{U_q(\mathbf t)}\bigl(j_{r,!}(E_rV)\bigr)
\simeq
j_{r,*}\!\left(\boldsymbol\nu_{L_r(\mathbf t)}(E_rV)\right).
\]
Naturality of the exchange identifies these terms compatibly with the
rank-layer differentials, yielding \eqref{eq:higher-nakayama-complex}.
Finally, for a finite-dimensional algebra $A$ and a bounded projective
resolution of $M$,
\[
H^{-p}(DA\otimes_A^{\mathbf L}M)
\cong
D\Ext_A^p(M,A).
\]
Applying this with $A=A_{U_q(\mathbf t)}$ gives
\eqref{eq:higher-nakayama-ext}; the vanishing for $p>i$ follows from the
length of the projective resolution.
\end{proof}

The following small example makes the costandard complex of
Theorem~\ref{thm:higher-nakayama-rank-complex} completely explicit.

\begin{example}\label{ex:rank-layer-three}
Take $n=3$, $\mathbf t=\mathbf1$, and $i=1$, so $q=2$.  The stratum
$L_2(\mathbf1)$ consists of the three pairwise incomparable support-rank-$2$
vertices.  Let $V$ be the vertex simple at $\mathbf a=(1,1,0)$.  Since
$L_2(\mathbf1)$ is discrete, $V$ is projective.  The right section $j_*V$ is
supported only at $\mathbf a$, whereas $j_!V$ has in addition a
one-dimensional component at the top vertex $\mathbf1=(1,1,1)$.  Writing
$S_{\mathbf1}$ for the top vertex simple of $\Rep(U_2(\mathbf1))$, the
rank-layer resolution is
\[
0\longrightarrow S_{\mathbf1}\longrightarrow j_!V
\longrightarrow j_*V\longrightarrow0.
\]
Thus the difference between the two sections is exactly the next support-rank
layer.  Here $E_3V$ is the one-dimensional representation of
$L_3(\mathbf1)=\{\mathbf1\}$, while the Nakayama functors on the discrete
posets $L_2(\mathbf1)$ and $L_3(\mathbf1)$ are naturally isomorphic to the
identity functors.  Hence
Theorem~\ref{thm:higher-nakayama-rank-complex} gives
\[
\boldsymbol\nu_{U_2(\mathbf1)}(j_*V)
\simeq
\bigl[I_{\mathbf1}\longrightarrow S_{\mathbf a}\bigr]
\quad\text{in }D^b(\Rep(U_2(\mathbf1))),
\]
where $I_{\mathbf1}=j_{3,*}(E_3V)$ is the principal injective at the top
vertex $\mathbf1$, placed in degree $-1$, and $S_{\mathbf a}=j_*V$ is placed
in degree $0$.  The displayed map is the canonical epimorphism
$I_{\mathbf1}\twoheadrightarrow S_{\mathbf a}$.  Consequently its cohomology
objects in degrees $0$ and $-1$ are, respectively,
\[
D\Hom_{A_{U_2(\mathbf1)}}(j_*V,A_{U_2(\mathbf1)})
\quad\text{and}\quad
D\Ext^1_{A_{U_2(\mathbf1)}}(j_*V,A_{U_2(\mathbf1)}),
\]
and all higher Nakayama cohomology vanishes.
\end{example}

\section{Application II: The Global Case}\label{sec:global-recollement-serre}

We finally apply the general theory to the global local-cohomology categories
$\mathcal A^-$ and $\mathcal H_i$.  This both globalizes the finite-box
quotient and torsion structures and records the one-sided phenomena caused by
the failure of outward finiteness.  Recall that
their representation-theoretic models are described by full subposets of
$\mathbb N^n$.  For $0\le q\le n$, set
\[
U_q=\{\mathbf a\in\mathbb N^n\mid\ell(\mathbf a)\ge q\},
\qquad
L_q=\{\mathbf a\in\mathbb N^n\mid\ell(\mathbf a)=q\},
\]
and, for $0\le r\le n$, set
\[
D_r=\{\mathbf a\in\mathbb N^n\mid\ell(\mathbf a)\le r\}.
\]
Thus $U_q$, $D_r$, and $L_q$ are full subposets of $\mathbb N^n$ (for the indicated ranges).  For
$q=n-i$, Proposition~\ref{prop:global-fs} and
Theorem~\ref{thm:global-model} give
\[
\mathcal A^-\simeq\Rep_{\mathrm{fs}}(\mathbb N^n),
\qquad
\mathcal H_i\simeq\Rep_{\mathrm{fs}}(U_q).
\]
Each of the posets above is inward finite.  The posets $\mathbb N^n$ and $U_q$
are not outward finite: increasing any positive coordinate gives an infinite
principal upper set.  The same is true of $D_{q-1}$ and $L_q$ whenever they
contain a positive-support component.

This is the essential difference from the finite-box situation.  For
$U_q(\mathbf t)=U_{q+1}(\mathbf t)\sqcup L_q(\mathbf t)$ both complementary
recollement orientations exist, whereas globally only the inward-finite
orientation is automatic; the opposite left Kan extension may acquire
infinite support.  The following corollary records the surviving hereditary
torsion pairs and Gabriel quotients together with the corresponding abelian
recollements.

\begin{corollary}[Global order-ideal recollements]
\label{cor:global-complementary-recollements}
Let $q=n-i$.
\begin{enumerate}[label=\rm(\arabic*)]
\item If $1\le i\le n-1$, there is an abelian recollement
\[
\begin{tikzcd}[column sep=5.0em]
\Rep_{\mathrm{fs}}(L_q)
  \arrow[r, "i_*" description]
& \mathcal H_i
  \arrow[l, bend right=20, "i^*"']
  \arrow[l, bend left=20, "i^!"]
  \arrow[r, "j^*" description]
& \mathcal H_{i-1}
  \arrow[l, bend left=20, "j_!"']
  \arrow[l, bend right=20, "j_*"]
\end{tikzcd}
\]
coming from $U_q=L_q\sqcup U_{q+1}$.
Its associated TTF triple contains the hereditary torsion pair
\[
\bigl(\mathcal H_{i-1},\Rep_{\mathrm{fs}}(L_q)\bigr)
\quad\text{in }\mathcal H_i,
\]
and restriction gives both quotient equivalences
\[
\mathcal H_i/\mathcal H_{i-1}\simeq\Rep_{\mathrm{fs}}(L_q),
\qquad
\mathcal H_i/\Rep_{\mathrm{fs}}(L_q)\simeq\mathcal H_{i-1}.
\]
\item If $0\le i<n$, there is an abelian recollement
\[
\begin{tikzcd}[column sep=5.0em]
\Rep_{\mathrm{fs}}(D_{q-1})
  \arrow[r, "i_*" description]
& \mathcal A^-
  \arrow[l, bend right=20, "i^*"']
  \arrow[l, bend left=20, "i^!"]
  \arrow[r, "j^*" description]
& \mathcal H_i
  \arrow[l, bend left=20, "j_!"']
  \arrow[l, bend right=20, "j_*"]
\end{tikzcd}
\]
coming from $\mathbb N^n=D_{q-1}\sqcup U_q$.
Its associated TTF triple contains the hereditary torsion pair
\[
\bigl(\mathcal H_i,\Rep_{\mathrm{fs}}(D_{q-1})\bigr)
\quad\text{in }\mathcal A^-,
\]
and
\[
\mathcal A^-/\mathcal H_i\simeq\Rep_{\mathrm{fs}}(D_{q-1}),
\qquad
\mathcal A^-/\Rep_{\mathrm{fs}}(D_{q-1})\simeq\mathcal H_i.
\]
\end{enumerate}
\end{corollary}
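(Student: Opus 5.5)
Both parts will follow from Proposition~\ref{prop:fs-order-recollement} and Corollary~\ref{cor:order-ideal-consequences}, applied to a suitable upper/lower order-ideal decomposition of an inward-finite poset. The result is then transported through the equivalences $\mathcal A^-\simeq\Rep_{\mathrm{fs}}(\mathbb N^n)$ of Proposition~\ref{prop:global-fs} and $\mathcal H_i\simeq\Rep_{\mathrm{fs}}(U_{n-i})$ of Theorem~\ref{thm:global-model}.

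First I check the order-theoretic inputs. All posets involved ($\mathbb N^n$, $U_q$, $D_{q-1}$, $L_q$) are inward finite, since principal lower sets in $\mathbb N^n$ are finite boxes. Support rank is monotone: $\mathbf a\le\mathbf b$ implies $\ell(\mathbf a)\le\ell(\mathbf b)$. Hence $U_{q+1}$ is an upper order ideal of $U_q$ with complement $L_q$, which is then a lower order ideal of $U_q$. Likewise $U_q$ is an upper order ideal of $\mathbb N^n$ with complement $D_{q-1}$.

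For (1), take $P=U_q$, $Q=U_{q+1}$ and $R=L_q$. Proposition~\ref{prop:fs-order-recollement} yields the recollement $\Rep_{\mathrm{fs}}(L_q)\to\Rep_{\mathrm{fs}}(U_q)\to\Rep_{\mathrm{fs}}(U_{q+1})$. Since $1\le i\le n-1$, the relevant index ranges are valid. Theorem~\ref{thm:global-model} gives $\Rep_{\mathrm{fs}}(U_q)\simeq\mathcal H_i$ and $\Rep_{\mathrm{fs}}(U_{q+1})=\Rep_{\mathrm{fs}}(U_{n-(i-1)})\simeq\mathcal H_{i-1}$. I must check that under $\rho$ the extension-by-zero image of $\Rep_{\mathrm{fs}}(U_{q+1})$ is exactly $\mathcal H_{i-1}\subseteq\mathcal H_i$; this holds because both are described by support conditions inside $\Rep_{\mathrm{fs}}(\mathbb N^n)$. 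Corollary~\ref{cor:order-ideal-consequences} then gives the hereditary torsion pair $(\Rep_{\mathrm{fs}}(Q),\Rep_{\mathrm{fs}}(R))=(\mathcal H_{i-1},\Rep_{\mathrm{fs}}(L_q))$ and the two quotient equivalences. The TTF statement comes from Proposition~\ref{prop:recollement-ttf}(1), since $\ker i^*=\ker v^*$ consists of the objects supported on $Q$.

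For (2), take $P=\mathbb N^n$, $Q=U_q$ and $R=D_{q-1}$, where $q=n-i\ge1$ so that $D_{q-1}$ is defined and nonempty. The same two results give the recollement, the hereditary torsion pair $(\mathcal H_i,\Rep_{\mathrm{fs}}(D_{q-1}))$, and the quotients $\mathcal A^-/\mathcal H_i\simeq\Rep_{\mathrm{fs}}(D_{q-1})$ and $\mathcal A^-/\Rep_{\mathrm{fs}}(D_{q-1})\simeq\mathcal H_i$.

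The only point that needs care is bookkeeping, not a real obstacle. I must confirm that the functors labelled $i_*,j_!,j_*,\dots$ in the statement correspond to $v_*,u_!,u_*,\dots$ of Proposition~\ref{prop:fs-order-recollement}. I must also check that $u_*$ preserves finite support, which follows from Lemma~\ref{lem:fs-kan-support} by inward finiteness. Finally, the equivalences are exact, so recollements, Serre subcategories and Gabriel quotients transport across them.
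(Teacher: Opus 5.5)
Your proposal is correct and is essentially the paper's proof: apply Proposition~\ref{prop:fs-order-recollement} and Corollary~\ref{cor:order-ideal-consequences} to the upper/lower order-ideal decompositions $U_q=U_{q+1}\sqcup L_q$ and $\mathbb N^n=U_q\sqcup D_{q-1}$ of inward-finite posets, then transport everything through Proposition~\ref{prop:global-fs} and Theorem~\ref{thm:global-model}. Your extra bookkeeping (the index ranges, and the check that the extension-by-zero image of $\Rep_{\mathrm{fs}}(U_{q+1})$ is exactly $\mathcal H_{i-1}$ inside $\mathcal H_i$) just makes explicit what the paper's one-line proof leaves implicit.
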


\begin{proof}
Apply Proposition~\ref{prop:fs-order-recollement} and
Corollary~\ref{cor:order-ideal-consequences} first inside the inward-finite
poset $U_q$ and then inside $\mathbb N^n$, and transport the results through
Theorem~\ref{thm:global-model}.
\end{proof}

For $F\subseteq[n]$ with $|F|=q$, let $\mathbf1_F$ denote the
characteristic vector of $F$ and put
\[
L_F=\{\mathbf a\in\mathbb N^n\mid\operatorname{supp}(\mathbf a)=F\}.
\]
The components $L_F$ are mutually incomparable, and the map
$\mathbf a\mapsto\mathbf a-\mathbf1_F$ identifies $L_F$ with
$\mathbb N^F$.  Thus, as a poset,
\[
L_q\cong\bigsqcup_{\substack{F\subseteq[n]\\|F|=q}}\mathbb N^F.
\]
Consequently the first quotient in
Corollary~\ref{cor:global-complementary-recollements}(1) has the more explicit
form
\[
\mathcal H_i/\mathcal H_{i-1}
\simeq\Rep_{\mathrm{fs}}(L_q)
\simeq
\prod_{\substack{F\subseteq[n]\\|F|=q}}
\Rep_{\mathrm{fs}}(\mathbb N^F).
\]
Equivalently, each factor $\Rep_{\mathrm{fs}}(\mathbb N^F)$ is the category
of finite-dimensional $\mathbb N^F$-graded modules over
$\Bbbk[x_j\mid j\in F]$.

\begin{remark}[Support-rank torsion filtration]\label{rem:support-rank-torsion-filtration}
Iterating the hereditary torsion pairs in
Corollary~\ref{cor:global-complementary-recollements}(1), every
$M\in\mathcal H_r$, $0\le r\le n-1$, has a canonical functorial filtration
\[
0=T_{-1}M\subseteq T_0M\subseteq\cdots\subseteq T_rM=M,
\qquad
T_sM/T_{s-1}M\in\Rep_{\mathrm{fs}}(L_{n-s}).
\]
Under the representation model, $T_sM$ is the largest subobject supported on
$U_{n-s}$.  The same statement holds in every finite box.  For
$M\in\mathcal A^-$ there is one final quotient supported on
$L_0=\{\mathbf0\}$, so the terminal hereditary torsion pair is
$(\mathcal H_{n-1},\operatorname{add}S_{\mathbf0})$, not the generally
non-torsion pair $(\mathcal H_{n-1},\mathcal H_n)$.
\end{remark}

\begin{corollary}[Global bounded-derived lifts]
\label{cor:global-derived-recollements}
The two recollements in
Corollary~\ref{cor:global-complementary-recollements} lift to
\[
\begin{tikzcd}[column sep=4.2em]
D^b\!\left(\Rep_{\mathrm{fs}}(L_q)\right)
  \arrow[r, "i_*" description]
& D^b(\mathcal H_i)
  \arrow[l, bend right=20, "i^*"']
  \arrow[l, bend left=20, "\mathbf R i^!"]
  \arrow[r, "j^*" description]
& D^b(\mathcal H_{i-1})
  \arrow[l, bend left=20, "j_!"']
  \arrow[l, bend right=20, "\mathbf R j_*"]
\end{tikzcd}
\]
and
\[
\begin{tikzcd}[column sep=4.2em]
D^b\!\left(\Rep_{\mathrm{fs}}(D_{q-1})\right)
  \arrow[r, "i_*" description]
& D^b(\mathcal A^-)
  \arrow[l, bend right=20, "i^*"']
  \arrow[l, bend left=20, "\mathbf R i^!"]
  \arrow[r, "j^*" description]
& D^b(\mathcal H_i)
  \arrow[l, bend left=20, "j_!"']
  \arrow[l, bend right=20, "\mathbf R j_*"]
\end{tikzcd}
\]
respectively.
\end{corollary}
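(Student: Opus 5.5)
The plan is to deduce both lifts from Corollary~\ref{cor:fs-derived-recollement}, applied to the same order-ideal decompositions used for Corollary~\ref{cor:global-complementary-recollements}, and then transport the result through the equivalences of Theorem~\ref{thm:global-model} and Proposition~\ref{prop:global-fs}.

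\textbf{First recollement.} Take the poset $P=U_q$. By the discussion at the start of this section it is inward finite. Its upper order ideal is $U_{q+1}$ and the complementary lower ideal is $L_q$. Corollary~\ref{cor:fs-derived-recollement} then gives a triangulated recollement
\[
D^b(\Rep_{\mathrm{fs}}(L_q)) \to D^b(\Rep_{\mathrm{fs}}(U_q)) \to D^b(\Rep_{\mathrm{fs}}(U_{q+1})).
\]
Its exact functors $v^*,v_*,u_!,u^*$ are applied degreewise, and the remaining two functors are $\mathbf R i^!$ and $\mathbf R u_*$. Since $q+1=n-(i-1)$, Theorem~\ref{thm:global-model} gives exact equivalences $\mathcal H_i\simeq\Rep_{\mathrm{fs}}(U_q)$ and $\mathcal H_{i-1}\simeq\Rep_{\mathrm{fs}}(U_{q+1})$. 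Exact equivalences induce triangle equivalences on $D^b$. They also commute with derived functors: an exact equivalence preserves injectives, so one may compute with transported injective resolutions. Transporting therefore yields the first displayed diagram, with $j_*=u_*$, so that $\mathbf R j_*=\mathbf R u_*$.

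\textbf{Second recollement.} Take $P=\mathbb N^n$, which is inward finite. Use the decomposition into the upper ideal $U_q$ and the lower ideal $D_{q-1}$, and transport through $\rho:\mathcal A^-\simeq\Rep_{\mathrm{fs}}(\mathbb N^n)$ together with Theorem~\ref{thm:global-model}. One must check that these transports are compatible with the abelian recollements of Corollary~\ref{cor:global-complementary-recollements}, so that the derived diagrams genuinely lift them. This is immediate, because the abelian recollement and the derived lift are built from the same six functors.

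\textbf{Main obstacle.} The only nontrivial input is already contained in Corollary~\ref{cor:fs-derived-recollement}. That corollary needs bounded injective resolutions inside the finite-support categories. These come from the finite-lower-ideal argument, which is exactly where inward finiteness of $U_q$, $L_q$, $D_{q-1}$ and $\mathbb N^n$ is used. I would explicitly re-verify this for the subposets $L_q$ and $D_{q-1}$ as well, since each side category needs enough injectives for $\mathbf R i^!$ to land in bounded complexes. This holds because lower sets in a subposet of $\mathbb N^n$ are finite.

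I would also note, in line with the remark before Corollary~\ref{cor:global-complementary-recollements}, that no claim is made about the opposite orientation. Globally the left Kan extension onto $D_{q-1}$ or $L_q$ may have infinite support, so that orientation is not asserted.
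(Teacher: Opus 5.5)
Your proposal is correct and follows the paper's route: the paper's proof simply invokes Corollary~\ref{cor:fs-derived-recollement} for the inward-finite decompositions $U_q=L_q\sqcup U_{q+1}$ and $\mathbb N^n=D_{q-1}\sqcup U_q$, transported through Theorem~\ref{thm:global-model} and Proposition~\ref{prop:global-fs}, exactly as you do. One small imprecision: $\mathbf R i^!$ is computed from injective resolutions in the middle category, so its boundedness needs finite injective dimension there rather than in the side category $\Rep_{\mathrm{fs}}(L_q)$ or $\Rep_{\mathrm{fs}}(D_{q-1})$; this is harmless, since both hold here.
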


\begin{proof}
This is Corollary~\ref{cor:fs-derived-recollement}.
\end{proof}

\begin{proposition}[Right-Serre finiteness of the global posets]
\label{prop:global-right-serre-finiteness}
The abelian category $\Rep(\mathbb N^n)$ has global dimension $n$; in
particular, $\mathbb N^n$ is right-Serre finite.  More generally, every upper order ideal $U\subseteq\mathbb N^n$ is
right-Serre finite.  In particular, for the indicated ranges, the posets
$U_q$, $D_r$, and $L_q$ are right-Serre finite.
\end{proposition}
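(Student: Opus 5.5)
The plan is to identify $\Rep(\mathbb N^n)$ with the category of $\mathbb N^n$-graded $S$-modules having finite-dimensional graded components. Under this identification the covariant representable $P_{\mathbf b}$ corresponds to the free module $S(-\mathbf b)$. Every objectwise computation will be done with the multigraded Koszul complex.

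First, right-Serre finiteness of $\mathbb N^n$. By Lemma~\ref{lem:right-serre-model} it suffices to resolve each vertex simple $S_{\mathbf a}=\Bbbk(-\mathbf a)$. The Koszul complex
\[
0\to S(-\mathbf a-\mathbf1)\to\cdots\to\bigoplus_{|F|=1}S(-\mathbf a-\mathbf1_F)\to S(-\mathbf a)\to\Bbbk(-\mathbf a)\to0
\]
does this, since its terms are finite direct sums of the $P_{\mathbf a+\mathbf1_F}$.

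Next, the global dimension, which I expect to be the main obstacle because objects of $\Rep(\mathbb N^n)$ need not be finitely generated.
\begin{itemize}
\item For an arbitrary $M\in\Rep(\mathbb N^n)$, the degrees of $M$ are bounded below, so graded Nakayama applies and $M$ has a minimal graded free resolution.
\item Its $r$-th term is $\bigoplus_{\mathbf b}S(-\mathbf b)^{\beta_{r,\mathbf b}}$ with $\beta_{r,\mathbf b}=\dim_\Bbbk\operatorname{Tor}_r^S(\Bbbk,M)_{\mathbf b}$.
\item Computing Tor with the Koszul resolution of $\Bbbk$ shows that $\beta_{r,\mathbf b}$ is finite, and zero for $r>n$.
\item Each free term is again locally finite, hence lies in $\Rep(\mathbb N^n)$, and is projective there. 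Indeed, $S(-\mathbf b)_{\mathbf a}\ne0$ only for the finitely many $\mathbf b\le\mathbf a$, and each $\beta_{r,\mathbf b}$ is finite.
\item Thus every object has projective dimension at most $n$.
\item Equality is witnessed by $S_{\mathbf0}$, because $\operatorname{Tor}_n^S(\Bbbk,\Bbbk)\ne0$.
\end{itemize}
The care needed is to justify that the minimal resolution stays inside the locally finite category and that minimality lets Tor detect its length. Both points rest on the degrees being bounded below in $\mathbb N^n$.

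For an upper order ideal $U\subseteq\mathbb N^n$, note first that $U$ is inward finite as a subposet of $\mathbb N^n$. Take $\mathbf a\in U$ and restrict the Koszul resolution of $S_{\mathbf a}$ to $U$; restriction is exact. Every term involves $P_{\mathbf b}$ with $\mathbf b=\mathbf a+\mathbf1_F\ge\mathbf a$, so $\mathbf b\in U$. Since the principal upper set of such a $\mathbf b$ lies in $U$, restriction sends $P^{\mathbb N^n}_{\mathbf b}$ to $P^U_{\mathbf b}$. This gives a finite resolution of $S_{\mathbf a}$ by objects of $\mathcal P(U)$, and Lemma~\ref{lem:right-serre-model} concludes.

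This covers $U_q$. The poset $D_r$ is a lower order ideal of $\mathbb N^n$, so it is handled by Lemma~\ref{lem:right-serre-lower-ideal}. For $L_q$, use the decomposition $L_q\cong\bigsqcup_{|F|=q}\mathbb N^F$ into mutually incomparable components. A representable $P_{\mathbf b}$ on $L_q$ is supported in the component containing $\mathbf b$, and a vertex simple lives in a single component. Hence the Koszul resolution in $\mathbb N^F$, which is the case just proved with $n$ replaced by $|F|$, provides the required finite resolution. Inward finiteness of $L_q$ is inherited from $\mathbb N^n$.
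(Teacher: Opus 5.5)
Your proposal is correct and follows essentially the same route as the paper. Both restrict Koszul resolutions of vertex simples to an upper ideal, where all representables $P_{\mathbf a+\mathbf 1_F}$ stay inside $U$; both use Lemma~\ref{lem:right-serre-lower-ideal} for $D_r$; and both handle $L_q$ through the decomposition $L_q\cong\bigsqcup_{|F|=q}\mathbb N^F$.

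You are somewhat more careful than the paper in two places. First, you check that minimal free resolutions stay locally finite, so the bound $\operatorname{gldim}\le n$ holds inside $\Rep(\mathbb N^n)$. Second, you get resolutions by \emph{finite} sums of representables for $\mathbb N^n$ and $L_q$ directly from Koszul complexes, rather than inferring them from finite global dimension.
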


\begin{proof}
Under the standard equivalence between $\Rep(\mathbb N^n)$ and locally finite
$\mathbb N^n$-graded modules over
$S=\Bbbk[x_1,\ldots,x_n]$, the representable $P_{\mathbf a}$ corresponds to
$S(-\mathbf a)$.  Since $S$ has global dimension $n$, every object of
$\Rep(\mathbb N^n)$ has projective dimension at most $n$; the vertex simple
$S_{\mathbf0}$ corresponds to $S/(x_1,\ldots,x_n)$ and has projective
dimension $n$.  Thus
\[
\operatorname{gldim}\Rep(\mathbb N^n)=n.
\]

Now let $U\subseteq\mathbb N^n$ be an upper order ideal and
$\mathbf a\in U$.  Regard the vertex simple $S_{\mathbf a}$ as a
representation of $\mathbb N^n$ by extension by zero.  It corresponds to the
shifted residue field $\Bbbk(-\mathbf a)$.  Its shifted Koszul resolution
translates to the finite representable resolution
\[
0\longrightarrow P_{\mathbf a+\mathbf1}
\longrightarrow\cdots\longrightarrow
\bigoplus_{\substack{F\subseteq[n]\\|F|=2}}P_{\mathbf a+\mathbf1_F}
\longrightarrow
\bigoplus_{\substack{F\subseteq[n]\\|F|=1}}P_{\mathbf a+\mathbf1_F}
\longrightarrow P_{\mathbf a}
\longrightarrow S_{\mathbf a}\longrightarrow0.
\]
Every vertex occurring here is of the form
$\mathbf a+\mathbf1_F\ge\mathbf a$, hence belongs to $U$.  Restriction to
$U$ therefore gives the same resolution with the representables of $U$.
Lemma~\ref{lem:right-serre-model} shows that every upper order ideal of
$\mathbb N^n$ is right-Serre finite; in particular this applies to $U_q$.

The poset $D_r$ is a lower order ideal of $\mathbb N^n$, so its
right-Serre finiteness follows from Lemma~\ref{lem:right-serre-lower-ideal}.
Finally, for
\[
L_F=\{\mathbf a\in\mathbb N^n\mid\operatorname{supp}(\mathbf a)=F\},
\qquad F\subseteq[n],\ |F|=q,
\]
the components $L_F$ are mutually incomparable and
$\mathbf a\mapsto\mathbf a-\mathbf1_F$ is a poset isomorphism
$L_F\simeq\mathbb N^F$.  Hence
\[
L_q\cong\bigsqcup_{\substack{F\subseteq[n]\\|F|=q}}\mathbb N^F
\qquad\Longrightarrow\qquad
\Rep(L_q)\simeq
\prod_{\substack{F\subseteq[n]\\|F|=q}}\Rep(\mathbb N^F).
\]
Since $\operatorname{gldim}\Rep(\mathbb N^F)=|F|=q$ for every such $F$,
and the product is finite, we have
\[
\operatorname{gldim}\Rep(L_q)=q.
\]
In particular, $L_q$ is right-Serre finite.
\end{proof}

\begin{proposition}[Global right Serre functors and exchanges]
\label{prop:global-right-serre}
For every $1\le q\le n$, the bounded derived finite-support categories of
$\mathbb N^n$, $U_q$, $D_{q-1}$, and $L_q$ have right Serre functors.  For
the upper-ideal inclusions
\[
\begin{aligned}
u_q&:U_q\hookrightarrow\mathbb N^n &&(1\le q\le n),\\
\kappa_q&:U_{q+1}\hookrightarrow U_q &&(1\le q<n),\\
\lambda_q&:L_q\hookrightarrow D_q &&(1\le q\le n),
\end{aligned}
\]
there are canonical natural isomorphisms
\[
\mathbb S_{\mathbb N^n}^R\circ u_{q,!}
\simeq\mathbf Ru_{q,*}\circ\mathbb S_{U_q}^R,
\qquad
\mathbb S_{U_q}^R\circ\kappa_{q,!}
\simeq\mathbf R\kappa_{q,*}\circ\mathbb S_{U_{q+1}}^R,
\qquad
\mathbb S_{D_q}^R\circ\lambda_{q,!}
\simeq\mathbf R\lambda_{q,*}\circ\mathbb S_{L_q}^R.
\]
\end{proposition}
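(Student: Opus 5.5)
The plan is to combine Proposition~\ref{prop:global-right-serre-finiteness} with Proposition~\ref{prop:right-serre-existence} and Theorem~\ref{thm:right-serre-exchange}; the work lies in checking the adjoint hypothesis of the theorem.

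\textbf{Step 1: right Serre functors.} By Proposition~\ref{prop:global-right-serre-finiteness}, the posets $\mathbb N^n$, $U_q$, $D_{q-1}$ and $L_q$ are inward finite and right-Serre finite. Proposition~\ref{prop:right-serre-existence} then gives right Serre functors on their bounded derived finite-support categories. For $D_q$, note that $D_q$ is a lower ideal of $\mathbb N^n$, so Lemma~\ref{lem:right-serre-lower-ideal} applies; $U_{q+1}$ is an upper ideal, and for $q=n$ one has $D_n=\mathbb N^n$.

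\textbf{Step 2: the left Kan extensions.} In each of the three inclusions the source is an upper order ideal of the target. For $u_q$ and $\kappa_q$ this is immediate. For $\lambda_q$, the set $L_q$ is the top stratum of $D_q$, so $x\in L_q$ and $x\le y\in D_q$ force $\ell(y)=q$. Hence the left Kan extension is extension by zero. It is exact and preserves finite support, so $\mathbf L f_!=f_!$ acts degreewise on $D^b(\Rep_{\mathrm{fs}})$. It remains left adjoint to the exact restriction $f^*$ there, since both are exact and the abelian adjunction passes to bounded derived categories.

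\textbf{Step 3: the right Kan extensions.} The target poset is inward finite, so Lemma~\ref{lem:fs-kan-support} shows that $f_*$ preserves finite support. Every finite-support object has a finite injective resolution by finite-support injectives $I_p$ (the construction before Corollary~\ref{cor:fs-derived-recollement}). Therefore $\mathbf Rf_*$ is defined on $D^b(\Rep_{\mathrm{fs}})$ with bounded values, and $f^*\dashv\mathbf Rf_*$ holds by the standard derived adjunction, because $f^*$ is exact.

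For the upper-ideal inclusions $u_q$ and $\kappa_q$, this is exactly the recollement functor $\mathbf Ru_*$ of Corollary~\ref{cor:fs-derived-recollement}, applied with $P=\mathbb N^n$ or $P=U_q$. For $\lambda_q$, the inclusion $L_q\subseteq D_q$ is again an upper ideal of the inward-finite poset $D_q$, with complement $D_{q-1}$, so the same corollary applies verbatim. Thus all three inclusions satisfy the hypotheses of Theorem~\ref{thm:right-serre-exchange}, and the three displayed isomorphisms follow from that theorem.

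\textbf{Main obstacle.} The delicate point is the compatibility in Step 3: the derived adjunction must be computed inside $D^b(\Rep_{\mathrm{fs}})$ and not inside $D^b(\Rep)$. To settle this, I would use Lemma~\ref{lem:right-serre-model}, which says the inclusion $D^b(\Rep_{\mathrm{fs}}(P))\to D^b(\Rep(P))$ is fully faithful. Injective resolutions by the $I_p$ compute $\mathbf Rf_*$ in both settings. Restriction $f^*$ sends $I_p^P$ to the injective $I_p^Q$ when $p\in Q$ and to an object with finite lower support otherwise, and this should make the adjunction isomorphisms agree.
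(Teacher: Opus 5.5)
Your proposal is correct and follows the paper's proof. The right Serre functors come from Proposition~\ref{prop:global-right-serre-finiteness} together with Proposition~\ref{prop:right-serre-existence}. Each displayed inclusion is an upper ideal, so its left Kan extension is extension by zero, and the right Kan extension preserves finite support by Lemma~\ref{lem:fs-kan-support}. Theorem~\ref{thm:right-serre-exchange} then gives the three isomorphisms. You are in fact slightly more careful than the paper: you check that $D_q$, and not only $D_{q-1}$, is right-Serre finite, and you spell out the derived adjunctions.

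The worry in your final paragraph is unnecessary. Since $f^*$ is exact, $f_*$ preserves injectives of $\Rep_{\mathrm{fs}}$, and bounded complexes of injectives are K-injective. So $f^*\dashv\mathbf R f_*$ holds entirely inside $D^b(\Rep_{\mathrm{fs}})$, with no comparison to $D^b(\Rep)$ needed.
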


\begin{proof}
Proposition~\ref{prop:global-right-serre-finiteness} and
Proposition~\ref{prop:right-serre-existence} give the right Serre functors.
For each displayed upper-ideal inclusion, left Kan extension is extension by
zero, while right Kan extension preserves finite support by
Lemma~\ref{lem:fs-kan-support}.  The three formulas are therefore immediate
from Theorem~\ref{thm:right-serre-exchange}.
\end{proof}

\begin{remark}[The missing global support-rank-stratum exchange]
\label{rem:global-rank-kan-failure}
For $q\le r<n$, the inclusion $j_r:L_r\hookrightarrow U_q$ does not in
general have a left Kan extension on finite-support categories.  If $V$ is
the vertex simple at $\mathbf a\in L_r$ and
$F=\operatorname{supp}(\mathbf a)$, then $(j_{r,!}V)_{\mathbf b}$ is nonzero
whenever $\mathbf b_F=\mathbf a$; the remaining $n-r$ coordinates of
$\mathbf b$ can be arbitrarily large.  Thus $j_{r,!}V$ has infinite support.
Consequently the left side of
\eqref{eq:general-right-serre-exchange} is not a functor between the global
finite-support derived categories, even though the right Serre functors on
$L_r$ and $U_q$ exist.  Thus the support-rank-stratum exchange and the
rank-layer construction are intrinsically finite-box statements.
\end{remark}

\end{document}